\documentclass{amsart}
\usepackage[dvipsnames]{xcolor}
\usepackage{hyperref}
\hypersetup{
     colorlinks=true,
     linkcolor=blue!60!black,
     filecolor=blue!75!black,
     citecolor = green!50!black,      
     urlcolor=magenta,
     }
\usepackage{stmaryrd}
\usepackage{amssymb}
\usepackage{ bbold }
\usepackage[all]{xy}
\xyoption{all}
\usepackage{url}
\usepackage{mathrsfs}
\usepackage{tikz-cd}
\usepackage{eucal}
\usepackage{comment}
\usepackage{mathtools}
\usepackage{amsthm}
\usepackage{thmtools}
 \usepackage[nameinlink,capitalise,noabbrev]{cleveref}
\usepackage{marginnote}
\usepackage[normalem]{ulem}

\newcommand{\nameop}{\operatorname{op}\nolimits}
\newcommand{\res}{\operatorname{res}\nolimits}
\newcommand{\ind}{\operatorname{ind}\nolimits}
\newcommand{\Hom}{\operatorname{Hom}\nolimits}
\newcommand{\Ext}{\operatorname{Ext}\nolimits}

\def \A{{\mathcal A}}

\def \C{{\mathcal C}}

\def \D{{\mathcal D}}
\def \F{{\mathcal F}}
\def \O{{\mathcal O}}
\def \T{{\mathcal T}}
\def \S{{\mathcal S}}

\def \W{{\mathcal W}}

\def \Fin{{\mathcal{F}\textrm{in}}}

\def \pp{\mathfrak{p}}

\def \Wfin{{\mathcal W}_{G}}

\def \Ofin{{\mathcal O}_{G{\mathcal{F}\textrm{in}}}}
\def \PrL{\mathrm{Pr}^{\mathrm{L}}_{\mathrm{st}}}

\DeclareMathOperator{\Mod}{Mod}

\DeclareMathOperator{\CAlg}{CAlg}

\DeclareMathOperator{\Cof}{Cof}

\numberwithin{equation}{section}
\newtheorem{thmx}{Theorem}

\newtheorem*{introtheorem}{Theorem}

\newtheorem{theorem}[equation]{Theorem}
\newtheorem{proposition}[equation]{Proposition}
\newtheorem{corollary}[equation]{Corollary}
\newtheorem{lemma}[equation]{Lemma}

\theoremstyle{remark}
\newtheorem{definition}[equation]{Definition}

\newtheorem{remark}[equation]{Remark}
\newtheorem{recollection}[equation]{Recollection}

\newtheorem{notation}[equation]{Notation}
\newtheorem{construction}[equation]{Construction}

\newtheorem{warning}[equation]{Warning}

\keywords{Stable categories, Benson cofibrants, cotorsion pairs, Quillen model categories}
\subjclass[2020]{18G65, 18G80; 20C12, 20C07, 18N40}

\title[The proper stable module category]
{The proper stable module category of a group algebra}

\begin{document}

\author[]{Georgios Dalezios}
\author[]{Juan Omar G\'omez}

\address{Georgios Dalezios, Dipartimento di Informatica - Settore di Matematica, Università degli Studi di Verona, Strada le Grazie 15 - Ca’ Vignal, I-37134 Verona, Italy}
\email{georgios.dalezios@univr.it}
\urladdr{https://sites.google.com/view/georgiosdalezios/home}

\address{Juan Omar G\'omez, Fakultat f\"ur Mathematik, Universit\"at Bielefeld, D-33501 Bielefeld, Germany}
\email{jgomez@math.uni-bielefeld.de}
\urladdr{https://sites.google.com/cimat.mx/juanomargomez/home}

\begin{abstract}
We introduce the \textit{proper stable module category} for an arbitrary discrete group over any commutative ring by means of cotorsion pairs and abelian model structures. This category is a well-generated tensor-triangulated category and is compactly generated in case the commutative ring is regular. Our construction resembles the topological approach via proper equivariant stable homotopy theory. Moreover, it agrees with the Mazza--Symonds stable module category, whenever the latter is defined, and with various other  stable categories associated to hierarchically defined groups. Along the way, we introduce certain homological dimensions and study them in detail, comparing them with the classical notions.
\end{abstract}

\maketitle

\setcounter{tocdepth}{1}

\tableofcontents

\section{Introduction}

The widespread relevance of the representation theory of groups is evident across numerous fields, even beyond mathematics. This ubiquity is largely due to how deeply the theory is tied to the fundamental understanding of groups. However, representation theory comes in different flavors depending on the choice of both the group and the ground ring, each requiring substantially different techniques. For instance, when working with finite groups over fields whose characteristic does not divide the order of the group, one encounters ordinary representation theory, which is well understood through character theory. By contrast, modular representation theory is generally wild and often demands techniques borrowed from various other areas.

In the latter setting, a key framework that facilitates the transfer of ideas and methods from other contexts is the so-called stable module category, obtained from the category of representations by factoring out the projective modules. This naturally motivates the construction of stable categories in more general situations. One may allow the group to be infinite or, alternatively, keep the group finite while considering coefficients in a more general commutative ring. In both cases, the ad hoc construction by modding out the projectives is generally insufficient; for instance, one would like the resulting stable category not only to be triangulated, but also to carry a compatible monoidal structure. Consequently, one needs constructions that are better adapted to the specific setting.

Many authors have approached this problem by imposing various assumptions on either the group or the coefficient ring in order to identify well-behaved and meaningful subcategories of the category of representations. For example, for finite groups over general commutative rings, the authors of \cite{benson2013module} introduced relative stable categories, which were subsequently studied in \cite{baland2015prime}, \cite{baland2019comparisons} and further refined in \cite{BBIKP25} in the context of commutative Noetherian rings. In particular, the latter work considers the stable category of a certain subcategory of the category of \textit{big} lattices, namely representations that are projective over the ground ring, thereby generalizing the modular setting. This stable category enjoys several desirable properties. For instance, it is a rigidly-compactly generated tt-category. Moreover, many of its tt-geometric aspects are well understood, providing valuable insight into the representation theory in this setting.

When moving beyond finite groups, the picture becomes even less clear. Indeed, arbitrary groups are often too \textit{wild} to admit a satisfactory theory. It is therefore reasonable to focus on classes of groups that behave well with respect to their finite subgroups and to construct stable categories, at least over suitable ground rings; see, for instance, \cite{benson97}, \cite{CEKO14}, \cite{MS19}, \cite{Gom24}. 

Recent literature, which is more related to this paper, demonstrates constructions of stable categories in greater generality, see \cite{ET_cof}, and \cite{emmanouil2025monoidal}. Although the resulting categories remain somewhat elusive - for example, it is not known whether they admit the structure of a tt-category, nor how they behave with respect to restriction to subgroups - these constructions nevertheless constitute significant progress in this direction.

The aim of this article is to address these issues from the following perspective. Namely, we observe that all of the approaches mentioned above seem to seek the same collection of properties, which one might regard as a \textit{wish list} for a stable category of a group over a commutative ring:

\begin{enumerate}
\item It should be a well-generated tensor-triangulated category whose monoidal structure is induced by tensoring over the ground ring, and it should admit an enhancement.
\item The construction should be \textit{functorial} in the group variable with respect to group monomorphisms. In particular, one should have suitable restriction, induction, and coinduction functors.
\item Restriction to the family of all finite subgroups should detect isomorphisms.
\item For finite groups, it should recover the existing stable categories. 
\item It should be \textit{universal} among constructions satisfying the previous properties.
\end{enumerate}

\begin{warning}
  One might argue that another desirable property  for a stable module category is to be a rigidly-compactly generated tensor-triangulated category. However, for infinite groups, even in cases where a well-behaved stable category exists, this is rarely the case; see \cite[Remark 2.7]{gom24b}.
\end{warning}

Note that (d) implicitly assumes that an appropriate notion of stable category for finite groups has already been established. In our setting, we take this to be the stable category of (big) lattices, introduced in \cite{BBIKP25}. As we show in \cref{cor-frobenius} and \cref{prop-monoidal-finite}, this category satisfies properties (a) and (b) and recovers the classical stable module category in the modular setting. We postpone a precise formulation of property (e) until the end of this introduction, since it is most naturally expressed in the language of $\infty$-categories. 
For the moment, we state an informal version of our main result:

\begin{introtheorem}[Informal version]
For every group $G$ and every commutative ring $k$, there exists a tt-category $\mathrm{StMod}^{\mathrm{prop}}_{kG}$ satisfying the properties listed above, which we name the proper stable module category of $G$ over $k$.
\end{introtheorem}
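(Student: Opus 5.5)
We construct $\mathrm{StMod}^{\mathrm{prop}}_{kG}$ as the homotopy category of an abelian model structure on $\Mod(kG)$, obtained from a cotorsion pair generated by modules induced from finite subgroups. Fix the class $\mathcal{S}$ of $kG$-modules of the form $\ind_H^G L$, where $H$ runs through the finite subgroups of $G$ and $L$ runs through the (big) $kH$-lattices, i.e. $kH$-modules that are projective over $k$. Let $\W$ be the right orthogonal $\mathcal{S}^{\perp}$, computed with $\Ext^{1}$ and with $\mathcal{S}$ closed under syzygies. Let $\Cof$ be ${}^{\perp}\W$; by construction it should consist of the "proper Benson cofibrants". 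Since $\mathcal{S}$ is essentially generated by a set (lattices over each finite $H$ of bounded size, up to filtrations), the Eklof--Trlifaj theorem makes $(\Cof,\W)$ a complete, cogenerated-by-a-set cotorsion pair. The first step is to check that this pair is hereditary and that $\W$ is thick and contains the projectives. Hovey's correspondence then yields a cofibrantly generated abelian model structure with cofibrant objects $\Cof$, trivial objects $\W$, and all objects fibrant. The homotopy category is the stable category of the Frobenius exact category $\Cof$, and it is well generated because the structure is cofibrantly generated by a set.

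For property (a), the tensor product over $k$ with diagonal action should make this a monoidal model structure. The key check is that $\ind_H^G L\otimes_k \ind_K^G L'$ decomposes, via the Mackey formula, into modules induced from finite subgroups $H\cap {}^gK$ applied to lattices, so it again lies in $\Cof$. One also needs to show that tensoring a cofibrant with a trivial object stays trivial; this reduces, by adjunction, to the statement that restriction to finite subgroups preserves $\W$.

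For property (b), restriction to a subgroup preserves induced-from-finite modules, by Mackey again. Induction preserves them trivially, and coinduction is right adjoint to restriction. This gives Quillen adjunctions, hence functoriality along monomorphisms. For (c), a module $M$ is trivial if and only if $\Ext^1(\ind_H^G L, M)=\Ext^1_{kH}(L,\res_H M)$ vanishes for all finite $H$, i.e. if and only if each $\res_H M$ is trivial. Hence restriction to the family of finite subgroups is conservative. For (d), when $G$ is finite the trivial subgroup, and in particular $G$ itself, is finite, so $\Cof$ consists of the summands of lattice-filtered modules. We then invoke \cref{cor-frobenius} and \cref{prop-monoidal-finite} to identify the result with the stable category of lattices of \cite{BBIKP25}.

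Property (e) we phrase $\infty$-categorically. The underlying stable $\infty$-category is the localization of $\Cof$, and the functor to it should be initial among functors from $\Mod(kG)$ that invert the maps becoming equivalences after restriction to all finite subgroups. We would prove this by showing that the $\infty$-category is generated under colimits by inductions of finite-group stable categories, mirroring proper equivariant homotopy as a colimit over the orbit category $\Ofin$.

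The main obstacle is the monoidal compatibility: showing that $\W$ is a tensor ideal with respect to $\Cof$ for arbitrary $k$, where lattices need not be flat-friendly. We would first try to prove that for cofibrant $C$, the functor $C\otimes_k -$ preserves $\W$. For this we reduce to $C=\ind_H^G L$, use $\ind_H^G L\otimes M\cong \ind_H^G(L\otimes \res_H M)$, and then apply the finite-group result that a lattice tensored with a trivial module is trivial.
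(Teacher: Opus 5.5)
Your $\mathcal S^\perp$ is exactly the paper's $\Wfin$ (by Eckmann--Shapiro, together with \cref{prop:identifications} for finite $H$), so your cofibrants are $\mathrm{PCof}(kG)$. The Hovey triple, the Quillen adjunctions via Mackey and Eklof, joint conservativity and the finite case therefore run as in \cref{thm-proper-coto}, \cref{lemma-Quillen-adj}, \cref{coro-detection} and \cref{rem:finite_W}. Your Mackey-plus-filtration proof that $\mathrm{PCof}(kG)\otimes\mathrm{PCof}(kG)\subseteq\mathrm{PCof}(kG)$ is a valid alternative to \cref{lem:tensors}.

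\textbf{The gap is in the rest of the monoidal step.} You want $C\otimes W\in\Wfin$ for all proper cofibrant $C$ and all $W\in\Wfin$. Your plan has three steps: (i) reduce to $C=\ind_H^G L$; (ii) use that $\ind_H^G$ sends trivial $kH$-modules to trivial $kG$-modules; (iii) use a finite-group fact that a big lattice tensored with a module in $\Cof(kH)^\perp$ stays in $\Cof(kH)^\perp$. None of these is established for arbitrary $k$.
\begin{itemize}
\item Step (i) needs $\Wfin$ closed under transfinite extensions.
\item Step (ii) needs each $\W_F$ closed under arbitrary direct sums, since in $\res^G_F\ind_H^G$ the set $F\backslash G/H$ is typically infinite. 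Because $\Wfin$ is a right $\Ext^1$-orthogonal of big, non-finitely-presented lattices, neither closure property is available.
\item Step (iii) is not supplied by \cref{prop-monoidal-finite}. A monoidal model structure only guarantees that tensoring with a cofibrant object preserves weak equivalences between cofibrant objects, and $0\to W$ is not such a map unless $W$ is cofibrant. For finitely generated lattices (iii) follows from $L\otimes W\cong\hom_k(L^*,W)$. But the natural passage to big lattices builds them from finitely generated ones, which is the regularity-dependent identification $\Mod^{\mathrm{lf}}(kF,k)=\Mod(kF,k)$ of \cref{rec-modlf-comp}.
\end{itemize}

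None of this is needed. In \cref{thm-hov} the trivially cofibrant objects are $\mathrm{PCof}(kG)\cap\Wfin=\mathrm{Proj}(kG)$. So the second half of condition (b) only asks that cofibrant $\otimes$ projective be projective, which holds because proper cofibrants are $k$-projective (\cref{rem-Wfin-contains-cof}, \cref{prop-tensor-proje-latti}). You never treat the unit axiom (a) explicitly, and it concerns only $K=\ker(Qk\to k)$. Since $\res^G_F k$ is a lattice and $\Cof(kF)$ is closed under kernels of epimorphisms, $\res^G_F K\in\W_F\cap\Cof(kF)=\mathrm{Proj}(kF)$. Hence $\res^G_F(x\otimes K)$ is projective for every finite $F$, and $x\otimes K\in\Wfin$. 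Throughout, one only ever tensors a lattice with a projective.

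\textbf{The argument for (e) does not reach its actual content.} Being initial among functors inverting maps that become equivalences on all finite subgroups is just the defining property of $\Mod(kG)[\Wfin^{-1}]$. Those maps are exactly the proper weak equivalences, because restriction preserves weak equivalences (it is right Quillen and every object is fibrant) and is jointly conservative (\cref{coro-detection}). The real content of (e) is the descent equivalence \cref{coro-stmod-descent}, and generation under colimits by objects induced from finite subgroups does not yield it. What is needed is:
\begin{itemize}
\item a coherent functor $\O_G^{\nameop}\to\CAlg(\PrL)$ (\cref{const-stmod-functor});
\item left adjoints to restriction satisfying Beck--Chevalley for pullbacks of orbits, which is where the double coset formula enters;
\item joint conservativity of restriction to finite subgroups.
\end{itemize}
These are precisely the inputs of \cite[Theorem~7.4]{GP26}.
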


We now explain our results and methods in greater detail, leaving the justification for our terminology to a later point in the introduction.

\subsection*{Methodology} Let $G$ be a group and $k$ a commutative ring. It was recently shown by Emmanouil and Ren \cite{ET_cof} that the class $\mathrm{Cof}(kG)$ of Benson cofibrant modules, introduced in \cite{Ben98}, form the actual cofibrant objects of a combinatorial stable abelian model structure on the category of all $kG$--modules. Our starting point is the observation that, when $G$ is finite, the homotopy category of this model structure coincides with the stable category of $kG$--lattices introduced in \cite{BBIKP25}. 

Motivated by this fact, we seek to extend this construction to arbitrary groups, ensuring that the resulting homotopy category satisfies the requirements of our wish list. Our approach is to work at the level of model categories and to transport the aforementioned model structures coming from \cite{ET_cof} along restriction functors to finite subgroups.

More precisely, we consider the class $\mathcal{W}_G$ of all $kG$--modules $x$ such that, for all finite subgroups $H\leqslant G$, we have $\mathrm{res}^G_H(x)\in\mathrm{Cof}(kH)^{\bot}$. The latter class denotes the right $\mathrm{Ext}^1_{kG}$--orthogonal with respect to $\mathrm{Cof}(kH)$. The class $\mathcal{W}_G$ is our candidate for the trivial (weakly isomorphic to zero) objects, while $\mathrm{PCof}(kG)\coloneqq {}^{\perp}\mathcal{W}_{\mathrm{G}}$ is our candidate for the class of cofibrant objects, that we call \textit{proper cofibrant} $kG$--modules. We point out that all model structures considered in this paper are \textit{abelian} in the sense of Hovey \cite{Hov02} and correspond to certain cotorsion pairs and the so-called \textit{Hovey triples} in the underlying abelian category; the relevant theory is recalled in \cref{sec-prel}.
We are now in a position to state one of our main results, which combines \cref{thm-proper-coto} and \cref{prop-monoidal-proper}.

\begin{thmx}
    The triple $(\mathrm{PCof(kG),\mathrm{PCof}(kG)^{\perp}},\Mod(kG))$ is a hereditary Hovey triple that gives rise to a combinatorial stable abelian monoidal model structure on $\Mod(kG)$ with respect to the monoidal structure given by $\otimes_k$. 
\end{thmx}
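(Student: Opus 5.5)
I will use Hovey's correspondence between abelian model structures and Hovey triples, in the form of Gillespie's criterion for hereditary triples. The trivial class $\mathcal{W}_G$ is the one defined above, cut out by restrictions to finite subgroups. I will check four things in turn: that $(\mathrm{PCof}(kG),\mathcal{W}_G)$ is a complete hereditary cotorsion pair, that $\mathcal{W}_G$ is thick, that the pair is cogenerated by a set, and that it is compatible with $\otimes_k$.

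\textbf{Step 1: cogeneration by a set.} For each finite subgroup $H\leqslant G$, the Emmanouil--Ren model structure gives a cotorsion pair $(\mathrm{Cof}(kH),\mathrm{Cof}(kH)^{\perp})$ cogenerated by a set $S_H$, since it is combinatorial. Eckmann--Shapiro gives
\[
\Ext^1_{kG}(\ind_H^G s,x)\cong \Ext^1_{kH}(s,\res^G_H x).
\]
Hence $\mathcal{W}_G=S^{\perp}$ for the set $S=\bigcup_H \ind_H^G S_H$, where $H$ runs over representatives of the conjugacy classes of finite subgroups (a set). By Eklof--Trlifaj, $({}^{\perp}\mathcal{W}_G,\mathcal{W}_G)=(\mathrm{PCof}(kG),\mathcal{W}_G)$ is then a complete cotorsion pair. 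Its left class consists of the direct summands of $S\cup\{kG\}$-filtered modules, so the model structure will be cofibrantly generated, and hence combinatorial.

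\textbf{Step 2: heredity and thickness.} Each $\mathrm{Cof}(kH)^{\perp}$ is thick: it is the class of trivial objects of a stable abelian model structure in which every object is fibrant. Restriction is exact, so $\mathcal{W}_G$ is thick as well, being closed under summands, extensions, kernels of epimorphisms and cokernels of monomorphisms. In particular $\mathcal{W}_G$ is coresolving. Since the cotorsion pair is complete, the left class $\mathrm{PCof}(kG)$ is then closed under kernels of epimorphisms, so the pair is hereditary.

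\textbf{Step 3: the Hovey triple.} We take $\mathcal{F}=\Mod(kG)$. The triple is then $(\mathrm{PCof},\mathcal{W}_G,\Mod)$, and we need two complete cotorsion pairs: $(\mathrm{PCof}\cap\mathcal{W}_G,\Mod)$ and $(\mathrm{PCof},\mathcal{W}_G)$. The first requires $\mathrm{PCof}\cap\mathcal{W}_G=\mathrm{Proj}(kG)$. If $x$ lies in this intersection, then $\Ext^1(x,x')=0$ for every $x'\in\mathcal{W}_G$. For the syzygy $0\to \Omega x\to P\to x\to 0$ we have $\Omega x\in\mathcal{W}_G$ by thickness, since $P\in\mathcal{W}_G$ and $x\in\mathcal{W}_G$. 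So the sequence splits and $x$ is projective. That projectives lie in $\mathcal{W}_G$ follows from the fact that $\res^G_H$ preserves projectives and projectives lie in $\mathrm{Cof}(kH)^{\perp}$. Stability follows because projectives and injectives both lie in $\mathcal{W}_G$, since restriction also preserves injectives; hence in the homotopy category $\Omega$ and $\Sigma$ are inverse. The result is the stable category of the Frobenius category $\mathrm{PCof}\cap\mathcal{W}_G^{\perp}$.

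\textbf{Step 4: monoidality.} This is the main obstacle. By Hovey's criterion for abelian monoidal model structures, it suffices to show three things: proper cofibrants are closed under $\otimes_k$; the product of a proper cofibrant with a trivially cofibrant (projective) module is projective; and the unit condition for $k$. The second holds because $kG\otimes_k M\cong \ind_1^G\res_1 M$, and $M$ is $k$-projective; one checks first that proper cofibrants are $k$-projective. For the first, I will reduce to the generators. Proper cofibrants are summands of transfinite extensions of modules $\ind_H^G s$, and $\otimes_k$ preserves such filtrations. Using
\[
\ind_H^G s\otimes_k \ind_K^G t\cong \bigoplus_{HgK}\ind_{H\cap {}^gK}^G\bigl(\res s\otimes {}^g\res t\bigr),
\]
it remains to show that $\mathrm{Cof}(kL)$ is closed under tensor products and restriction to subgroups $L$ of finite groups, and that induction from finite subgroups sends $\mathrm{Cof}(kL)$ into $\mathrm{PCof}(kG)$. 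The last claim is again Eckmann--Shapiro, since $\res^G_L\mathcal{W}_G\subseteq \mathrm{Cof}(kL)^{\perp}$. I expect the closure of Benson cofibrants under $\otimes_k$ and restriction for finite groups to be supplied by \cite{ET_cof} together with the identification of $\mathrm{Cof}(kH)$ with $k$-projective modules in the finite case. Finally, the unit: a proper cofibrant replacement $Qk\to k$ must become a weak equivalence after tensoring with any cofibrant. This can be checked after restricting to finite subgroups, where it follows from the monoidality of the finite-group model structure.
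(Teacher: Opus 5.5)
Your proposal is correct. Steps 1--3 are essentially the paper's proof of \cref{thm-proper-coto}: Eckmann--Shapiro produces the generating set; thickness of $\Wfin$ comes from exactness of restriction; heredity follows from completeness plus $\Wfin$ being coresolving; and the syzygy argument gives $\mathrm{PCof}(kG)\cap\Wfin=\mathrm{Proj}(kG)$. Your unit-axiom argument is also essentially the paper's: you restrict to finite $F$, where $\res^G_F(\ker\alpha)$ lies in $\Cof(kF)\cap\Cof(kF)^\perp=\mathrm{Proj}(kF)$.

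The real divergence is in how you show that $\mathrm{PCof}(kG)$ is closed under $\otimes_k$. The paper never uses generators here. For $x\in\mathrm{PCof}(kG)$ and $y$ merely $k$-projective, it uses $\Ext^1_{kG}(x\otimes y,w)\cong\Ext^1_{kG}(x,\hom(y,w))$ and checks that $\hom(y,-)$ preserves $\Wfin$. On a finite subgroup $H$, the same adjunction reduces this to the fact that $\sigma\otimes\res^G_H y$ is $k$-projective, hence lies in $\Cof(kH)$ (\cref{lem:tensors}). You instead filter both factors by generators $\ind_H^G s$, use $k$-flatness to filter the tensor product, split $\ind_H^G s\otimes\ind_K^G t$ by the double coset formula, and finish with Eckmann--Shapiro and Eklof's lemma. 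This is sound: $S_H\subseteq{}^\perp(S_H^\perp)=\Cof(kH)$, so all generators are $k$-projective. Also, for finite $L$, closure of $\Cof(kL)$ under $\otimes_k$ and restriction is immediate from \cref{prop:cof_proj}, so you do not need \cite{ET_cof}.

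Your route is heavier than necessary and proves less. It suffices to filter only $x$ and apply the projection formula $\ind_H^G s\otimes y\cong\ind_H^G(s\otimes\res^G_H y)$. That already gives $x\otimes y\in\mathrm{PCof}(kG)$ for every $k$-projective $y$, with no Mackey formula. This stronger form is what the paper uses later, e.g.\ in \cref{thm:PCcd}. The adjunction argument is shorter and gives the stronger statement. Yours gives an explicit description of $x\otimes y$ as assembled from modules induced from finite subgroups, in the same spirit as the paper's proof that restriction preserves $\mathrm{PCof}$.

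Two small slips. First, you omitted the third hypothesis of Hovey's criterion, \cref{thm-hov}(c): $Z\otimes f$ must be a monomorphism for every cofibration $f$. It holds because cofibrations have $k$-projective cokernel and are therefore $k$-split, which you establish anyway. Second, the relevant Frobenius category is $\mathrm{PCof}(kG)\cap\Mod(kG)=\mathrm{PCof}(kG)$, not $\mathrm{PCof}(kG)\cap\Wfin^{\perp}$.
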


The homotopy category of this model structure is by definition the \textit{proper stable module category} $\mathrm{StMod}^{\mathrm{prop}}_{kG}$. It is a triangulated category equivalent to $\mathsf{St}\mathrm{PCof}(kG)$, the stable category of the Frobenius category $\mathrm{PCof}(kG)$.

An important feature of this model structure is that, for every group monomorphism $i\colon H\hookrightarrow G$ the adjoint triple
\begin{center}
          \begin{tikzcd}[column sep=large, row sep=large]
     \Mod(kH) \arrow[r,"\ind_i", yshift=3mm] \arrow[r, yshift=-3mm, "\mathrm{coind}_i"'] & \Mod(kG) \arrow[l, "\res_i" description] 
    \end{tikzcd}
    \end{center}
give rise to Quillen adjuntions $(\ind_i,\res_i)$ and $(\res_i,\mathrm{coind}_i)$; see \cref{lemma-Quillen-adj}. This already constitutes an advantage of our construction, since the analogous statement for the model structure of \cite{ET_cof} is not known to hold in full generality.
Moreover, this observation allows us to strengthen the well-generatedness of the proper stable module category. The following corresponds to \cref{rem:ros} together with \cref{thm-proper-compact}. 

\begin{thmx}
    The proper stable module category $\mathrm{StMod}^{\mathrm{prop}}_{kG}$ is a well-generated tensor-triangulated category which is compactly generated in case $k$ is regular.
\end{thmx}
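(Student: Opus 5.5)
We need to show two things: that $\mathrm{StMod}^{\mathrm{prop}}_{kG}$ is well generated and tensor-triangulated, and that it is compactly generated when $k$ is regular.

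\emph{Well generation and the tensor structure.} By the first theorem, the model structure is combinatorial, stable and monoidal. The homotopy category of a stable combinatorial model category is well generated; this is Rosický's theorem, since such a category is the homotopy category of a presentable stable $\infty$-category. Monoidality makes $\otimes_k$ a left Quillen bifunctor, so it derives to a symmetric monoidal structure on the homotopy category. That structure is exact in each variable and preserves coproducts. Hence $\mathrm{StMod}^{\mathrm{prop}}_{kG}$ is a well-generated tt-category.

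\emph{Compact generation for regular $k$.} I will exhibit a set of compact generators and check generation by restricting to finite subgroups. Fix a finite subgroup $H\leqslant G$. The results recalled from \cite{BBIKP25} give that, for $k$ regular, the stable category of $kH$-lattices (the homotopy category of the Emmanouil--Ren model structure for $H$) is compactly generated, with generators the finitely generated lattices. Let $\mathcal{S}_H$ denote a set of compact generators for it.

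By \cref{lemma-Quillen-adj}, $(\ind^G_H,\res^G_H)$ is a Quillen pair, so it derives to an adjunction $L\ind \dashv R\res$ on homotopy categories. Since $\ind$ preserves the cofibrant objects, $L\ind$ is computed on cofibrant objects. By the other Quillen pair $(\res,\mathrm{coind})$, the functor $R\res$ is also a left adjoint, so it preserves coproducts. Therefore $L\ind$ preserves compactness, and $\ind^G_H(s)$ is compact for every $s\in\mathcal{S}_H$.

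For generation, take $x$ with $\Hom(\ind^G_H s,\Sigma^n x)=0$ for all $H$, $s$, $n$. Adjunction gives $\Hom(s,\Sigma^n\res^G_H x)=0$, so $\res^G_H x\simeq 0$ in the stable category of $kH$. This means $\res^G_H x\in\mathrm{Cof}(kH)^\perp$ after fibrant replacement; since every object is fibrant, the condition holds for $x$ itself. This holds for every finite $H$, so $x\in\mathcal{W}_G$, i.e.\ $x\simeq0$. Hence $\{\ind^G_H s : H \text{ finite}, s\in\mathcal{S}_H\}$ is a set of compact generators; it is a set because we index over finite subgroups and isomorphism classes of $s$.

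The key point is the conservativity step, and it goes through because $\mathcal{W}_G$ is defined exactly by the restriction conditions. The main obstacle is the finite-group input: that the Emmanouil--Ren homotopy category for finite $H$ agrees with the \cite{BBIKP25} lattice category and is compactly generated for regular $k$. If that comparison is not available in the needed form, I would instead argue directly. The plan there is to show that finitely generated lattices $M$ are compact, using that $\underline{\Hom}(M,-)$ can be computed via a complete resolution by finitely generated projectives, which needs finite global dimension of $k$ (regularity). I would then show that these lattices detect zero objects via Tate cohomology.
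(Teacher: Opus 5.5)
Your proposal is correct and follows essentially the paper's route: well-generation via Rosický's result on combinatorial stable model categories, the tensor-triangulated structure from the monoidal model structure, and compact generation by inducing compact generators of $\mathsf{St}\,\mathrm{Cof}(kF)$ for finite $F$, with generation coming from the joint conservativity of restriction. Your hands-on orthogonality argument is what the paper packages as \cref{lemma-compact-abstract}, and the comparison you flag as the main obstacle is supplied by \cref{prop:identifications} together with \cref{rec-modlf-comp} (i.e.\ \cite{BBIKP25}), so your fallback plan is not needed.
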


We further show that our proper stable module category recovers the stable module category of Mazza and Symonds \cite{MS19}, whenever the latter is defined, as well as the stable module category of \cite{ET_cof} for certain hierarchically defined classes of groups; see \cref{subsec-hiera}.

At this stage, our construction addresses points (a)-(d) of the wish list. We now turn to the final point, namely universality. To formulate this property precisely, we work in the setting of stable $\infty$-categories as developed in \cite{Lur17}; we refer the reader to \cref{sec-infty} for further references.  
We emphasize, however, that none of the results in this paper—except those in \cref{sec-infty}—rely on the theory of $\infty$-categories.

We define the \textit{proper stable module $\infty$-category} $\mathbf{StMod}(kG)$ as the underlying $\infty$-category of the proper model structure on $\Mod(kG)$; see \cref{def-inf-proper}. In particular, this construction provides a stable $\infty$-categorical enhancement of $\mathrm{StMod}^{\mathrm{prop}}_{kG}$.
In fact, this construction extends to a functor on the orbit $\infty$-category:
\[
\mathbf{StMod}(k-)\colon \mathcal{O}_G^\mathrm{op} \to \mathrm{CAlg}(\mathrm{Pr}^\mathrm{L}_\mathrm{st}),
\]
see \cref{const-stmod-functor}. Combining this construction with results from \cite{GP26}, we obtain that we can describe the proper stable module $\infty$-category in terms of the ones associated to its finite subgroups. 

The following corresponds to \cref{coro-stmod-descent}. 

\begin{thmx}
     Let $G$ be a group and $k$ a commutative ring. Then the restriction functors induce an equivalence of symmetric monoidal stable $\infty$-categories 
    \[
    \mathbf{StMod}(kG)\xrightarrow[]{\simeq} \lim_{G/F\in \Ofin^{\nameop}}\mathbf{StMod}(kF).
    \]
\end{thmx}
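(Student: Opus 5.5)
The plan is to deduce the equivalence from a general descent statement in \cite{GP26}. The key step is to check that the functor $\mathbf{StMod}(k-)\colon \mathcal{O}_G^{\nameop}\to \CAlg(\PrL)$ of \cref{const-stmod-functor} is \emph{right Kan extended} from the full subcategory $\Ofin^{\nameop}$ of orbits with finite stabilizers. Evaluating at $G/G$ then gives the displayed equivalence. It is symmetric monoidal because limits in $\CAlg(\PrL)$ are computed underlying in $\PrL$, and hence in $\widehat{\mathrm{Cat}}_\infty$.

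First I verify the two hypotheses that a criterion of \cite{GP26} type (Mackey-style / Bousfield--Kan descent) requires.

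\begin{enumerate}
\item \emph{Joint conservativity.} The restriction functors $\res^G_F\colon \mathbf{StMod}(kG)\to \mathbf{StMod}(kF)$, for $F$ finite, are jointly conservative. By construction of the proper model structure, an object $x$ is weakly trivial exactly when $x\in\Wfin$, that is, when $\res^G_F x\in \mathrm{Cof}(kF)^\perp$ for all finite $F$. So $x\simeq 0$ if and only if $\res^G_F x\simeq 0$ for every $F$, and stability turns this into conservativity.
\item \emph{Induction and projection formula.} Each restriction has a left adjoint $\ind$, coming from the Quillen adjunction $(\ind_i,\res_i)$ of \cref{lemma-Quillen-adj}, and a projection formula $\ind(y)\otimes x\simeq \ind(y\otimes\res x)$ holds. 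This formula already holds at the level of modules via the usual isomorphism $kG\otimes_{kF}(y\otimes_k \res x)\cong (kG\otimes_{kF}y)\otimes_k x$. It is compatible with the model structures because $\ind$ preserves proper cofibrants and the tensor structure is monoidal (\cref{prop-monoidal-proper}).
\end{enumerate}

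With these in hand, the comparison functor $\mathbf{StMod}(kG)\to\lim_{\Ofin^{\nameop}}\mathbf{StMod}(kF)$ has a left adjoint. I would show the unit and counit are equivalences as follows. Conservativity reduces the question to checking after restriction to each finite $F$. The restricted comparison is then computed by a Mackey double-coset formula $\res^G_{F}\ind^G_{F'}\simeq\bigoplus_{F\backslash G/F'}\ind^F_{F\cap gF'g^{-1}}\res\, c_g$. This formula must also be established on homotopy categories; that works because both functors are left Quillen and the formula holds on modules.

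The identification then amounts to the statement that the unit object $k$ lies in the localizing ideal generated by $\{\ind^G_F k\}_F$, i.e.\ that the family of finite subgroups is ``nilpotent'' for $\mathbf{StMod}(k-)$. I expect this to be the main obstacle. The attempt would proceed as follows:
\begin{enumerate}
\item Take a proper cofibrant replacement of $k$. Proper cofibrants are built, via the cotorsion pair $(\mathrm{PCof}(kG),\Wfin)$ being cogenerated by a set, as transfinite extensions of modules of the form $\ind^G_F(c)$ with $c\in\mathrm{Cof}(kF)$.
\item Such filtrations yield colimits in the $\infty$-category, so every object lies in the localizing subcategory generated by the images of the $\ind^G_F$.
\item This is exactly the generation hypothesis needed for the limit formula in \cite{GP26}.
\end{enumerate}
The delicate point is showing that the cogenerating set for $\mathrm{PCof}(kG)$ can be chosen among modules induced from finite subgroups. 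I would derive this from the definition $\Wfin=\bigcap_F(\res^G_F)^{-1}\mathrm{Cof}(kF)^\perp$ together with the adjunction isomorphism $\Ext^1_{kG}(\ind^G_F c,x)\cong\Ext^1_{kF}(c,\res^G_F x)$, using that each $\mathrm{Cof}(kF)$ is cogenerated by a set \cite{ET_cof}.
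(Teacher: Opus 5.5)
Your proposal is correct and follows essentially the paper's route: the paper deduces the equivalence directly from \cite[Theorem~7.4]{GP26} after checking the same ingredients you list, namely the left adjoints $\ind$, left adjointability of pullback squares via the double coset formula, and joint conservativity of $(\res^G_F)_{F\in\Fin(G)}$. The generation step you flag as the main obstacle (filtering proper cofibrants by induced modules) is correct but is not an extra hypothesis: for left adjoints of a jointly conservative set of functors between presentable stable $\infty$-categories, localizing generation by their images is formally equivalent to the conservativity you already proved in your first step (compare \cref{lemma-compact-abstract}), and it is weaker than Mathew--Naumann--Noel nilpotence, which is not needed.
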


In other words, the above equivalence provides a precise formulation of the universality property in our wish list, since it depends only on properties  (a)-(d). 

\subsection*{Further applications} Since our approach introduces the class $\mathrm{PCof}(kG)$ of proper cofibrant modules, it is natural to ask how this relates to the class $\mathrm{Cof}(kG)$ of Benson cofibrant modules. The following result, corresponding to \cref{rem-Wfin-contains-cof} and \cref{thm:comparison}, provides a more precise comparison between the two notions.

\begin{thmx}
    Let $G$ be a group and $k$ a commutative ring. There is an inclusion $\mathrm{PCof}(kG)\subseteq\mathrm{Cof}(kG)$ which gives rise to a localization sequence of triangulated categories
\begin{equation*}
\xymatrix@C=3pc{
\mathsf{St}\,\mathrm{PCof}(kG) \ar[r]^-{j} & \mathsf{St}\,{\Cof}(kG) \ar[r]^-{\pi} & \mathsf{St}({\mathrm{Cof}(kG)\cap\mathcal{F}\mathrm{Proj}}(kG))
}
\end{equation*}
where $\mathcal{F}\mathrm{Proj}(kG)$  is the class of those $kG$--modules that become  projective after restriction to any finite subgroup of $G$. 
In particular, the canonical inclusion functor $j$ is an equivalence of triangulated categories if and only if $\mathrm{Cof}(kG)\cap\mathcal{F}\mathrm{Proj}(kG)$ agrees with the class of projective $kG$--modules. 
\end{thmx}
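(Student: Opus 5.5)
We plan to compare two hereditary cotorsion pairs on $\Mod(kG)$ with the same cofibrant-side kernel, and read off the quotient from a general localization result for Frobenius categories.

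\textbf{Step 1: the inclusion.} We show $\mathrm{Cof}(kG)^{\perp}\subseteq\Wfin$. For $x\in\mathrm{Cof}(kG)^\perp$ and $H\leqslant G$ finite, $\res^G_H(x)$ lies in $\mathrm{Cof}(kH)^\perp$. By the Eckmann--Shapiro isomorphism $\Ext^1_{kH}(c,\res x)\cong\Ext^1_{kG}(\ind_H^G c,x)$, it suffices that $\ind_H^G$ sends Benson cofibrants to Benson cofibrants. This holds because $\ind_H^G c\otimes_k B(G,k)\cong \ind_H^G(c\otimes_k \res B(G,k))$, and $\res^G_H B(G,k)$ is a direct summand-type module of bounded functions that is cofibrant-detecting for $H$. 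Taking left orthogonals then gives $\mathrm{PCof}(kG)={}^\perp\Wfin\subseteq{}^\perp(\mathrm{Cof}^\perp)=\mathrm{Cof}(kG)$.

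\textbf{Step 2: projective-injectives.} Both classes are Frobenius categories whose projective-injectives are $\mathrm{Proj}(kG)$, via the hereditary Hovey triples of \cite{ET_cof} and \cref{thm-proper-coto}. Hence $j$ is a fully faithful triangle functor; it preserves the triangulated structure since cones are computed by projective covers in both categories.

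\textbf{Step 3: identify the quotient.} We apply the standard recollement/localization for nested cotorsion pairs $(\mathcal{A}',\mathcal{B}')\le(\mathcal{A},\mathcal{B})$ with the same core: the Verdier quotient $\mathsf{St}\mathcal{A}/\mathsf{St}\mathcal{A}'$ is $\mathsf{St}(\mathcal{A}\cap\mathcal{B}')$ (cf.\ Becker's or Gillespie's results on localizing abelian model structures). Here $\mathcal{A}\cap\mathcal{B}'=\mathrm{Cof}(kG)\cap\Wfin$. We must show this equals $\mathrm{Cof}(kG)\cap\mathcal{F}\mathrm{Proj}(kG)$. Restriction preserves Benson cofibrancy, so for $x\in\mathrm{Cof}\cap\Wfin$ and finite $H$, the module $\res_H x$ lies in $\mathrm{Cof}(kH)\cap\mathrm{Cof}(kH)^\perp=\mathrm{Proj}(kH)$, giving $x\in\mathcal{F}\mathrm{Proj}$. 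Conversely, projectives over finite $H$ lie in $\mathrm{Cof}(kH)^\perp$ because $\mathrm{Cof}(kH)^\perp$ is exactly the trivial class containing the projectives, which gives the reverse inclusion. The functor $\pi$ is then induced by a right adjoint to $j$, namely cofibrant replacement in the proper structure: the $\mathrm{PCof}$-approximation sequence $0\to w\to p\to c\to0$ with $w\in\Wfin$ and, for $c$ cofibrant, $w\in\mathrm{Cof}\cap\Wfin$. Equivalently, $\pi$ is the projection onto the right orthogonal $\mathsf{St}(\mathrm{Cof}\cap\Wfin)$, and its vanishing exactly on the image of $j$ follows from $\mathrm{PCof}={}^\perp\Wfin$.

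\textbf{Step 4: the final clause.} The functor $j$ is an equivalence iff the quotient vanishes, i.e.\ iff every object of $\mathrm{Cof}\cap\mathcal{F}\mathrm{Proj}$ is projective. The reverse containment is clear.

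The main obstacle is Step 3: verifying that the localization sequence exists, specifically that every Benson cofibrant module admits a short exact sequence $0\to w\to p\to c\to 0$ with $p\in\mathrm{PCof}$ and $w\in\mathrm{Cof}\cap\Wfin$. We will get it from completeness of the proper cotorsion pair, using that $\mathrm{Cof}$ is closed under extensions and under kernels of epimorphisms (the hereditary property).
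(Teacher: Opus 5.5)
Your proposal is correct and follows essentially the paper's route. The inclusion comes from Eckmann--Shapiro together with the fact that $\ind_H^G$ carries $\Cof(kH)$ into $\Cof(kG)$ for finite $H$. The localization sequence is Becker's left localization of the two hereditary Hovey triples, which share all modules as fibrant objects, and its quotient $\mathsf{St}(\Cof(kG)\cap\Wfin)$ is identified with $\mathsf{St}(\Cof(kG)\cap\mathcal{F}\mathrm{Proj}(kG))$ exactly as you do.

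The one soft spot is your justification in Step 1 (and the unqualified ``restriction preserves Benson cofibrancy'' in Step 3). The actual reason is that $\res^G_H B(G,k)$ is $kH$-free for finite $H$ and Benson cofibrants are $k$-projective, so $c\otimes_k\res^G_H B(G,k)$ is $kH$-projective. That same $k$-projectivity is what makes restriction preserve cofibrancy, but only to \emph{finite} subgroups; for arbitrary subgroups this is not known.
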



We point out that, by a classical result of Cornick and Kropholler \cite{CP}, the class $\mathrm{Cof}(kG)$ is contained in the class $\mathrm{GProj}(kG)$ of Gorenstein projective $kG$--modules. Neither of these classes is known to be closed under tensor products or to be preserved under restriction to arbitrary subgroups of $G$. By contrast, the class $\mathrm{PCof}(kG)$ of proper cofibrants enjoys both of these properties, making it particularly relevant to understanding phenomena in the Gorenstein homological algebra of the group algebra.

Finally, we use the classes of proper cofibrant and Benson cofibrant modules to define the \textit{proper cofibrant cohomological dimension} $\mathrm{PCcd}_kG$ and the \textit{Benson cofibrant cohomological dimension $\mathrm{BCcd}_kG$} of a group $G$ over a commutative ring $k$; see \cref{def-Bensondim}. We study these invariants in detail in \cref{sec-homological}. In particular, the following result combines \cref{thm:BCcd} and \cref{thm:PCcd}. 

\begin{thmx}
    Let $G$ be a group and $k$ a commutative ring with $\mathrm{gldim}(k)<\infty$. Then the following are equivalent:
\begin{itemize}
    \item[(i)] The Benson cofibrant cohomological dimension $\mathrm{BCcd}_{k}G$ \textnormal{(}resp. the proper cofibrant cohomological dimension $\mathrm{PCcd}_kG$\textnormal{)} is finite.
    \item[(ii)] There exists a $k$-split short exact sequence $0\rightarrow k\rightarrow w\rightarrow c\rightarrow 0$ of $kG$--modules where $\mathrm{pd}_{kG}(w)<\infty$ and $c\in\mathrm{Cof}(kG)$ \textnormal{(}resp. $c\in \mathrm{PCof}(kG)$\textnormal{)}.
    \item[(iii)] All $kG$--modules have finite Benson cofibrant dimension \textnormal{(}resp. proper cofibrant dimension\textnormal{)}.
    \item[(iv)] $\mathrm{spli}(kG)<\infty$ and $\mathrm{Cof}(kG)=\mathrm{GProj(kG)}$ \textnormal{(}resp. $\mathrm{PCof}(kG)=\mathrm{Cof}(kG)=\mathrm{GProj(kG)}$\textnormal{)}. 
\end{itemize}
\end{thmx}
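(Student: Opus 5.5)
We prove the two versions in parallel, treating the Benson cofibrant case first and then adapting it to the proper one. Throughout, $\mathcal{X}$ denotes $\mathrm{Cof}(kG)$ (respectively $\mathrm{PCof}(kG)$). We will use three facts established earlier. First, each of these classes is the left half of a hereditary complete cotorsion pair whose right half contains all modules of finite projective dimension; for $\mathrm{Cof}$ this comes from \cite{ET_cof}, and for $\mathrm{PCof}$ from \cref{thm-proper-coto}. Second, $\mathcal{X}\otimes_k(-)$ preserves the relevant classes on $k$-split or $k$-projective pieces; for $\mathrm{PCof}$ this is the monoidality in \cref{prop-monoidal-proper}. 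Third, there are inclusions $\mathrm{PCof}\subseteq\mathrm{Cof}\subseteq\mathrm{GProj}$, the second by Cornick--Kropholler. The $\mathcal{X}$-dimension of a module $M$ is the length of a shortest resolution of $M$ by modules in $\mathcal{X}$, and the dimension $\mathrm{BCcd}_kG$ (respectively $\mathrm{PCcd}_kG$) is, by \cref{def-Bensondim}, the $\mathcal{X}$-dimension of the trivial module $k$. Because the classes are hereditary and resolving, $\mathcal{X}$-dimension is well behaved: if $M$ has $\mathcal{X}$-dimension at most $n$, then the $n$-th syzygy of $M$ in any projective resolution lies in $\mathcal{X}$.

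\emph{(i)$\Rightarrow$(ii).} Take a projective resolution of $k$ and let $\Omega^n k$ be its $n$-th syzygy, where $n$ is the finite dimension in (i); then $\Omega^n k\in\mathcal{X}$. Since $\mathcal{X}$ is the left half of a complete cotorsion pair, there is a special right approximation $0\to\Omega^n k\to w'\to c'\to0$ with $c'\in\mathcal{X}$ and $w'\in\mathcal{X}^\perp\cap\mathcal{X}$, so $w'$ is projective. Dimension shifting then yields $0\to k\to w\to c\to 0$ with $\mathrm{pd}(w)\le n$ and $c\in\mathcal{X}$. Concretely, we build this by pushing out the truncated resolution along the approximation, which is the standard Emmanouil--Talelli-type argument. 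The sequence is $k$-split because $c$ is $k$-projective: Benson cofibrants are $k$-projective when they are Gorenstein projective over $k$, and this holds since $\mathrm{gldim}\,k<\infty$.

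\emph{(ii)$\Rightarrow$(iii).} Tensor the sequence in (ii) over $k$ with an arbitrary module $M$; it stays exact because it is $k$-split. This gives $0\to M\to w\otimes_k M\to c\otimes_k M\to0$. Here $c\otimes_k M$ has $\mathcal{X}$-dimension bounded by $\mathrm{gldim}\,k$. To see this, resolve $M$ over $k$ by $k$-projectives, use that $c\otimes_k P\in\mathcal{X}$ for $P$ a $k$-projective $kG$-module, and use the monoidality. In the same way, $w\otimes_k M$ has $\mathcal{X}$-dimension at most $\mathrm{pd}\,w+\mathrm{gldim}\,k$. The two-out-of-three property for finite $\mathcal{X}$-dimension in short exact sequences, which holds because the pair is hereditary, then bounds the $\mathcal{X}$-dimension of $M$.

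\emph{(iii)$\Rightarrow$(iv) and (iv)$\Rightarrow$(i).} Assume (iii). Every module then has finite Gorenstein projective dimension, since $\mathcal{X}\subseteq\mathrm{GProj}$. By the Emmanouil/Talelli characterisation, this gives $\mathrm{spli}(kG)<\infty$; the characterisation applies here because $\mathrm{gldim}\,k<\infty$. To get $\mathrm{GProj}\subseteq\mathcal{X}$, take $M$ Gorenstein projective, so $M$ is an $n$-th syzygy in a complete resolution for every $n$. Write $M\cong\Omega^n N$ with $n$ at least the uniform $\mathcal{X}$-dimension bound; this uniform bound comes from the argument (ii)$\Rightarrow$(iii). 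Then $M\in\mathcal{X}$. In the proper case the same argument gives $\mathrm{GProj}\subseteq\mathrm{PCof}$, and combining with the third fact yields $\mathrm{PCof}=\mathrm{Cof}=\mathrm{GProj}$. For the converse, $\mathrm{spli}<\infty$ gives a finite Gorenstein projective dimension of $k$, and (iv) turns this into a finite $\mathcal{X}$-dimension, which is (i).

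The main obstacle is the $k$-splitting in (i)$\Rightarrow$(ii), together with the tensor-closure estimate in (ii)$\Rightarrow$(iii) for $\mathrm{Cof}$. The difficulty is that $\mathrm{Cof}$ is not known to be closed under $\otimes_k$. The first thing to try is to use Benson's definition directly: $c\in\mathrm{Cof}$ means $c\otimes_k B$ is projective, where $B$ is the module of bounded functions $G\to k$. From this, $c\otimes_k M\in\mathrm{Cof}$ for $k$-projective $M$ should follow by associativity of $\otimes_k$.
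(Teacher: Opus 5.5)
Your proof is correct, but two of its key steps take a different route from the paper's.

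\textbf{(i)$\Rightarrow$(ii).} The paper goes through Gorenstein homological algebra. Finite $\mathrm{Cofdim}_{kG}(k)$ forces finite $\mathrm{Gpd}_{kG}(k)$, and \cite[Lemma~2.17]{CFH} then gives $0\to k\to w\to c\to 0$ with $c$ Gorenstein projective. One gets $c\in\mathrm{Cof}(kG)$ by \cite[Proposition~2.2(i)]{ET_cof}, and the proper case is deduced from the Benson case via \cref{cor:pcofs}. You instead build an Auslander--Buchweitz hull directly from the complete hereditary pair $(\mathcal{X},\mathcal{X}^\perp)$ with $\mathcal{X}\cap\mathcal{X}^\perp=\mathrm{Proj}(kG)$. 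This treats both classes uniformly and needs no Gorenstein input.

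A single pushout of the truncated resolution does not produce the sequence, though. You must iterate once per step of the resolution: push out, re-embed the resulting object of $\mathcal{X}$ into a projective with cokernel in $\mathcal{X}$, and pass to the quotient.

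\textbf{$\mathrm{GProj}(kG)\subseteq\mathcal{X}$.} The paper uses that a Gorenstein projective module of finite $\mathcal{X}$-dimension lies in $\mathcal{X}$ (\cite[Proposition~2.2(i)]{ET_cof}, resp.\ \cref{cor:pcofs}), which needs no hypothesis on $k$. Your argument uses the uniform bound $\mathrm{pd}_{kG}(w)+\mathrm{gldim}(k)$ and deep syzygies in a complete resolution. It is valid here, but it uses $\mathrm{gldim}(k)<\infty$, so you lose the paper's observation that (iii)$\Leftrightarrow$(iv) holds over any $k$.

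\textbf{(ii)$\Rightarrow$(iii).} Tensoring with an arbitrary $M$ and resolving $M$ by $k$-projective $kG$-modules in length $\mathrm{gldim}(k)$ is just a repackaging of the paper's induction on $\mathrm{pd}_k(x)$. It uses that $w$ is $k$-projective, which holds because the sequence is $k$-split.

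\textbf{Justifications to fix.}
\begin{itemize}
\item The $k$-splitting is not an obstacle and has nothing to do with $\mathrm{gldim}(k)$. Every Benson cofibrant module is $k$-projective by \cref{prop:cof_proj}, unconditionally, and $\mathrm{PCof}(kG)\subseteq\mathrm{Cof}(kG)$. Your appeal to Gorenstein projectivity over $k$ is neither needed nor argued.
\item In the proper case you need $c\otimes_k P\in\mathrm{PCof}(kG)$ for $P$ only $k$-projective, such as the $\mathrm{gldim}(k)$-th syzygy of $M$. That is \cref{lem:tensors}, not the monoidality of \cref{prop-monoidal-proper}, which only gives $\mathrm{PCof}\otimes\mathrm{PCof}\subseteq\mathrm{PCof}$.
\item In the Benson case the corresponding fact is immediate, exactly as the paper uses it: $B\otimes(c\otimes P)\cong(B\otimes c)\otimes P$ is projective by \cref{prop-tensor-proje-latti}.
\end{itemize}
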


\subsection*{A topological approach}
We briefly explain the terminology ``proper'', which is motivated by the corresponding topological notion.

The work of the second-named author with Luca Pol \cite{GP26b} is motivated by the broader goal of understanding representation-theoretic aspects of groups through the lens of proper equivariant homotopy theory; see \cite{degrijse2023proper}. More precisely, in \textit{loc. cit.}, certain categories of group representations are described in terms of module categories internal to the category $\mathrm{Sp}_G$ of proper $G$--spectra, over suitable equivariant ring spectra.

We will not discuss the details of this construction here. Rather, the following result should be viewed 
simply as motivation for the present work.

\begin{introtheorem}
    [G\'omez--Pol]
Let $k$ be a Noetherian ring of finite global dimension, and let $G$ be a group of type $\Phi_k$. Then there is an equivalence of symmetric monoidal stable $\infty$-categories
\[
\mathbf{StMod}(kG) \simeq \mathrm{Mod}_{\mathrm{Sp}G}(b_G(k)\otimes \widetilde{EG}),
\]
where $b_G(k)$ denotes the equivariant Eilenberg--MacLane spectrum associated to the Borel construction of $k$, and $\widetilde{EG}$ denotes the cofiber of the canonical map $EG_+ \rightarrow S^0$.
\end{introtheorem}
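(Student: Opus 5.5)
The plan is to reduce the statement to finite groups using descent on both sides, and then to identify the two sides for a finite group $F$ by an explicit Verdier quotient description.

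On the algebraic side, \cref{coro-stmod-descent} already gives a symmetric monoidal equivalence
\[
\mathbf{StMod}(kG)\simeq \lim_{G/F\in \Ofin^{\nameop}}\mathbf{StMod}(kF).
\]
On the topological side I would prove the analogous statement:
\[
\mathrm{Mod}_{\mathrm{Sp}_G}\bigl(b_G(k)\otimes \widetilde{EG}\bigr)\simeq \lim_{G/F\in \Ofin^{\nameop}}\mathrm{Mod}_{\mathrm{Sp}_F}\bigl(b_F(k)\otimes \widetilde{EF}\bigr).
\]
For this I would use that proper $G$--spectra are generated by the orbits $G/F_+$ with $F$ finite, so that $\mathrm{Sp}_G$ is itself the limit of the $\mathrm{Sp}_F$ over $\Ofin^{\nameop}$ (this is the descent input from \cite{GP26}). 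I would also use that restriction sends $b_G(k)$ to $b_F(k)$ and $\widetilde{EG}$ to $\widetilde{EF}$. Passing to module categories over the induced commutative algebra in the limit then gives the displayed equivalence, since the module functor commutes with such limits when the restrictions are symmetric monoidal and have both adjoints.

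Once both descent statements are in place, it suffices to produce equivalences
\[
\mathrm{Mod}_{\mathrm{Sp}_F}\bigl(b_F(k)\otimes \widetilde{EF}\bigr)\simeq \mathbf{StMod}(kF)
\]
that are natural in the orbit category, i.e. compatible with restriction along inclusions of finite subgroups and with conjugations. For finite $F$ I would proceed in three steps.
\begin{enumerate}
\item By a Schwede--Shipley/Mathew--Naumann--Noel type argument, $\mathrm{Mod}_{\mathrm{Sp}_F}(b_F(k))$ is generated by $F/e_+\otimes b_F(k)$, whose endomorphism ring is $kF$ up to the Borel completion. This identifies $\mathrm{Mod}_{\mathrm{Sp}_F}(b_F(k))$ with a big derived category of $kF$--lattices, for instance $K(\mathrm{Inj}\,kF)$ restricted to $k$--projectives as in \cite{BBIKP25}.
\item Tensoring with $\widetilde{EF}$ is the smashing localization that kills the localizing subcategory generated by $F/e_+$. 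Under the identification in step 1, this subcategory corresponds to the one generated by $kF$.
\item The resulting Verdier quotient is the stable category of lattices, and by \cref{cor-frobenius} this is the homotopy category of $\mathbf{StMod}(kF)$.
\end{enumerate}
Naturality in $F$ follows because every identification above is made through the generators $F/e_+$ and $kF$, and these are compatible with induction and restriction.

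The hypotheses that $k$ is Noetherian of finite global dimension and that $G$ is of type $\Phi_k$ enter in two places. First, they guarantee that the Borel spectrum $b_G(k)$ is the correct object globally. Second, they guarantee that the descent on the topological side is compatible with that of \cref{coro-stmod-descent}. In particular, $\Phi_k$ ensures that modules which are projective on all finite subgroups have finite projective dimension, so that nothing new is invisible to finite subgroups.

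I expect the main obstacle to be the finite-group step for a general Noetherian $k$: showing that $\mathrm{Mod}(b_F(k))$ modulo $F/e_+$ is exactly the lattice stable category, and not some variant built from all modules. If this fails, I would first try to compare compact generators. Since $k$ is regular, $\mathbf{StMod}(kF)$ is compactly generated (\cref{thm-proper-compact}), with compact generators the induced lattices $kF/H\otimes$ finitely generated $k$--lattices. I would then check that these correspond to the compact objects $F/H_+\otimes b_F(k)\otimes \widetilde{EF}$ and have matching mapping spectra, which amounts to a Tate cohomology computation.
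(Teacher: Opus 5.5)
The paper does not prove this statement. It is quoted from \cite{GP26b} only as motivation, and the authors explicitly leave its extension to all groups as a conjecture. That gives a useful test of your plan, because nothing in your argument uses that $G$ is of type $\Phi_k$. Your paragraph on where the hypotheses enter is not attached to any actual step; in particular, detection on finite subgroups is already built into the proper model structure by \cref{coro-detection}. So if every step went through, you would have proved the conjectured statement for every group whenever $k$ is Noetherian of finite global dimension. The reductions themselves are sound. On the algebraic side you have \cref{coro-stmod-descent}. On the topological side, proper $G$--spectra satisfy the hypotheses of the descent criterion of \cite{GP26}, and modules over a commutative algebra in a limit of symmetric monoidal $\infty$-categories form the limit of the module categories (no adjoints are needed for this). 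The gaps are in the two steps you treat as routine.

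\textbf{The finite-group step.} Your step 1 is false as stated, and this step is where the real content lies.
The endomorphism spectrum of $F/e_+\otimes b_F(k)$ is $\res^F_e(F/e_+\otimes b_F(k))\simeq kF$, with no completion. So if this object generated, Schwede--Shipley would give $D(kF)$, and step 2 would produce the zero category. In fact $b_F(k)\otimes\widetilde{EF}$ is a $b_F(k)$--module whose $F$--fixed points are the Tate spectrum $k^{tF}$, nonzero as soon as $|F|$ is not invertible in $k$. It receives no nonzero maps from $F/e_+\otimes b_F(k)$, because $\res^F_e\widetilde{EF}\simeq 0$.
What you actually need is:
\begin{itemize}
\item all orbits $F/H_+\otimes b_F(k)$, $H\leqslant F$, matched with the permutation lattices $k[F/H]$;
\item an actual functor relating the two sides;
\item a proof that the permutation lattices generate $\mathsf{St}\,\mathrm{Cof}(kF)$;
\item a comparison of mapping spectra with Tate cohomology.
\end{itemize}
For $k$ not a field none of this is supplied. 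Your fallback presupposes the same missing functor and generation statement. Since finite groups are of type $\Phi_k$, this finite case is itself an instance of the theorem.

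\textbf{Naturality.} Passing to limits over $\Ofin^{\nameop}$ requires an equivalence of functors $\Ofin^{\nameop}\to\CAlg(\PrL)$. A family of pointwise equivalences compatible with restriction on generators up to homotopy is not enough. Composition in the orbit category and the Weyl group actions of $W_GF$ on $\mathbf{StMod}(kF)$ carry higher coherence data, and Morita-type identifications chosen objectwise are not functorial. You would have to construct a map of diagrams, for instance a symmetric monoidal comparison functor built functorially from the models and natural in $G/F$. Only then can you check that it is an equivalence for each finite $F$.

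Without these two ingredients the descent reduction does not yield the statement.
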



The crucial observation is that the
above equivalence appears to rely only on properties analogous to those appearing in our wish list. This leads us to conjecture that such an equivalence should hold for every group, provided that the ground ring is regular.


\subsection*{Acknowledgments} We would like to thank Luca Pol for his insights on \cref{sec-functoriality}.    GD acknowledges support by the project LAVIE – Large views of small phenomena:
decompositions, localizations, and representation type, FIS 00001706, funded by Program FIS2021 of Italian Ministry
of University and Research. JOG was supported by the Deutsche Forschungsgemeinschaft (Project-ID 491392403 – TRR 358).

\subsection*{Notation and conventions} 
Throughout this paper, $G$ denotes an arbitrary group and $k$ an arbitrary commutative ring with unit, unless additional assumptions are explicitly stated. We write $\Mod(kG)$ for the category of all (left) $kG$--modules. For a  Frobenius exact category  $\mathcal{E}$, we write $\mathsf{St}\mathcal{E}$ to denote its stable category.

\section{Preliminaries} \label{sec-prel}

This section is devoted to establishing some notation and terminology regarding cotorsion pairs on abelian categories, model structures, modules over group algebras and Benson cofibrant modules. 

\subsection{Abelian model categories} We assume familiarity with the language of model categories and refer to \cite{Hov99} and \cite{Hov02} for unexplained terminology. For convenience, we briefly recall some relevant results; see also  \cite{Bec14}.

\begin{notation}
    Let $\A$ be an abelian category and $\mathcal{X}$ be a 
    class of objects in $\A$. We write $\mathcal{X}^\perp$ to denote the class of all objects $y\in \A$ satisfying $\mathrm{Ext}^1_\A(x,y)=0$ for all $x\in \mathcal{X}$. 
    Here,  $\mathrm{Ext}^1_\A(-,-)$ denotes the Yoneda Ext bifunctor. Similarly, we write ${}^\perp\mathcal{X}$ to denote the class of all objects $x\in \A$ satisfying $\mathrm{Ext}^1_\A(x,y)=0$ for all $y\in \mathcal{X}$.
\end{notation}

\begin{definition}
    Let $\A$ be an abelian category. Consider a  pair $(\C,\F)$ of classes of objects of $\A$. We say that $(\C,\F)$ is a \textit{cotorsion pair} if $\C={}^\perp\F$ and $\C^\perp=\F$. Moreover, we say that a cotorsion pair $(\C,\F)$ is:
    \begin{itemize}
        \item \textit{small} if there is a set of objects $\S$ such that $\S^\perp=\F$ (in this case, we also say that $(\C,\F)$ is generated by the set $\S$);
        \item \textit{complete} if for every object $A\in\A$ there exists a short exact sequence $0\rightarrow F\rightarrow C\rightarrow A\rightarrow 0$ with $C\in\C$ and $F\in\F$, and also a short exact sequence $0\rightarrow A\rightarrow F'\rightarrow C'\rightarrow 0$ with $C'\in\C,$ and $F'\in\F$;
        \item \textit{hereditary} if for all $i\geqslant 1,\,  C\in C$ and $F\in\F$ we have $\mathrm{Ext}_{\A}^i(C,F)=0.$
    \end{itemize}
\end{definition}

\begin{recollection}
\label{rem:complete}
If $\A$ is a Grothendieck category with enough projectives then any cotorsion pair $(\mathcal{C,\mathcal{F}})$ in $\mathcal{A}$ which is generated by a set $\mathcal{S}$ is complete, see \cite[Corollary~6.6]{Hov02}. The original argument goes back to \cite[Theorem~10]{Eklof} (the terminology used in these sources differs than ours - we stick with using ``generated'' rather than ``cogenerated'' for a small cotorsion pair, which is by now customary). In this case, it is well known that $\mathcal{C}={}^{\oplus}\mathrm{Filt}(\mathcal{S})$, that is, $\C$ consists of summands of transfnite extensions of objects from $\mathcal{S}$, see for instance \cite[Corollary~6.9]{Hov02}. 
In the context of Hovey's correspondence, which is recalled below as  \cref{thm:Hovey}, this can be viewed as a version of the small object argument, see \cite{Jan-Man}.
\end{recollection}

\begin{recollection}
    Let $\A$ be a complete and cocomplete abelian category that admits a model structure. The objects of $\mathcal{A}$ that are weakly equivalent to the zero object are called \textit{trivial}. Following \cite{Hov02} we say that the model structure on $\A$ is \textit{abelian} if the (trivial) cofibrations are precisely the monomorphisms with (trivially) cofibrant cokernel and the (trivial) fibrations are precisely the  epimorphisms with (trivially) fibrant kernel. 
    
    Now, assume that we have a triple $(\C,\W,\F)$ of full subcategories of $\A$. We let $\mathrm{Cofib}$ denote the class of monomorphisms with cokernel in $\C$; $\mathrm{Fib}$ the class of epimorphisms with kernel in $\mathrm{Fib}$; and $\mathrm{Weak}$ the class of maps that factor as $p\circ i$ where $i$ is a monomorphism with cokernel in $\C\cap\W$ and $p$ is an epimorphism with kernel in $\W\cap\F$. 
    One says that the triple $(\C,\W,\F)$ determines an abelian model structure on $\A$ if the triple $(\mathrm{Cofib},\mathrm{Weak},\mathrm{Fib})$ is an abelian model structure on $\A$.
\end{recollection}

There is a close relation between cotorsion pairs and abelian model structures which goes back to \cite[Theorem 2.2]{Hov02}.

\begin{theorem}[Hovey]
\label{thm:Hovey}
    Let $\A$ be a complete and cocomplete abelian category and $(\C,\W,\F)$ a triple of classes of objects. Then the following are equivalent: 
    \begin{enumerate}
        \item  $\A$ admits an abelian model structure, with $\C, \F$ and $\W$ being the classes of cofibrant, fibrant and trivial objects repectively.
        \item Both $(\C,\W\cap \F)$ and $(\C\cap \W,\F)$ are complete cotorsion pairs, and $\W$ is thick; that is, $\W$ closed under direct summands and satisfies the 2-out-of-3 property for short exact sequences in $\A$.
    \end{enumerate}
    Moreover, if additionally  $(\C,\W\cap \F)$ and $(\C\cap \W,\F)$ are small cotorsion pairs, then the resulting model structure on $\A$ is cofibrantly generated. 
\end{theorem}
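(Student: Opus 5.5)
The plan is to prove the two implications separately. Throughout, I will use the dictionary that Ext-vanishing corresponds to lifting properties. The key lemma I would establish first is the following. Let $i\colon A\to B$ be a monomorphism with cokernel $C$, and let $p\colon X\to Y$ be an epimorphism with kernel $K$. Then $i$ has the left lifting property against $p$ whenever $\Ext^1_\A(C,K)=0$. Conversely, lifting against all such maps forces this Ext group to vanish.

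For the forward direction, I would apply the lemma to the square with $0\to C$ on one side and an extension $0\to K\to E\to C\to 0$ on the other. A lift is exactly a splitting of the extension. For the converse half, the lift is built from a chosen splitting by a Yoneda pullback/pushout diagram chase.

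For (1)$\Rightarrow$(2):
\begin{itemize}
\item I would first show that in an abelian model structure the trivial fibrations are the epimorphisms with kernel in $\W\cap\F$. Dually, the trivial cofibrations are the monomorphisms with cokernel in $\C\cap\W$.
\item Lifting $0\to C$ against $E\to C$ then gives $\C\subseteq{}^\perp(\W\cap\F)$.
\item Conversely, suppose $\Ext^1_\A(X,-)$ vanishes on $\W\cap\F$. Factor $0\to X$ as a cofibration $0\to Q$ followed by a trivial fibration $Q\to X$. The resulting short exact sequence $0\to K\to Q\to X\to 0$ has $K\in\W\cap\F$, so it splits. Hence $X$ is a retract of the cofibrant object $Q$, and so $X\in\C$.
\item The remaining equalities for both pairs are handled the same way.
\item Completeness comes from factoring $0\to A$ and $A\to 0$ into the appropriate maps.
\item Closure of $\W$ under summands follows from closure of weak equivalences under retracts.
\end{itemize}

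The delicate point in this direction is the 2-out-of-3 property for $\W$ on short exact sequences $0\to A\to B\to C\to 0$. I would reduce it to the statement that a monomorphism is a weak equivalence if and only if its cokernel is trivial, and dually for epimorphisms. To prove this, I would replace $A$, $B$, $C$ cofibrantly and fibrantly, using the functorial factorizations, so that the sequence can be compared to one with cofibrant–fibrant terms. For such bifibrant sequences, triviality is detected by homotopy-equivalence arguments.

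For (2)$\Rightarrow$(1), I would define $\mathrm{Cofib}$, $\mathrm{Fib}$ and $\mathrm{Weak}$ as in the recollection and verify the model axioms in the following order.
\begin{enumerate}
\item Both lifting axioms follow from the key lemma and the two cotorsion-pair orthogonalities.
\item For factorizations of $f\colon X\to Y$, I would take a special precover $0\to K\to Q\to Y\to 0$ with $Q\in\C$ and $K\in\W\cap\F$. The map $X\oplus Q\to Y$ is then an epimorphism with kernel $Z$.
\item Next take a special preenvelope $0\to Z\to F'\to C'\to 0$ and push out along $Z\to X\oplus Q$. This factors $f$ as a cofibration followed by a trivial fibration, and the other factorization is dual.
\item Retract closure is formal.
\end{enumerate}

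The hard part is 2-out-of-3 for $\mathrm{Weak}$. I would first prove that a map lies in $\mathrm{Weak}$ if and only if some (equivalently every) factorization of it as mono-then-epi has cokernel and kernel in $\W$. This is where thickness of $\W$ enters, via the snake lemma applied to the compositions. With this characterization, 2-out-of-3 reduces to exactness arguments using the snake lemma.

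Finally, if both pairs are generated by sets $\S$ and $\S'$, I would take as generating (trivial) cofibrations the inclusions $\Omega$-type monomorphisms $P'\hookrightarrow P$ with cokernel in $\S$ (resp. $\S'$), together with a generator. By \cref{rem:complete}, the transfinite compositions of these yield the classes $\C={}^{\oplus}\mathrm{Filt}(\S)$ and $\C\cap\W={}^{\oplus}\mathrm{Filt}(\S')$, which gives cofibrant generation. I expect the main obstacle to be the 2-out-of-3 verifications in both directions, specifically the independence-of-factorization step.
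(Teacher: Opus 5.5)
The paper gives no proof of this statement; it quotes \cite[Theorem~2.2]{Hov02}. Your outline broadly follows Hovey's argument: the Ext-criterion for lifting, the cotorsion pairs and their completeness from factoring $0\to X$ and $X\to 0$, and the factorization through a special precover followed by a pushout along a special preenvelope.

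The genuine gap is the characterization of $\mathrm{Weak}$ on which you base 2-out-of-3 in (2)$\Rightarrow$(1). It is false that for $f\in\mathrm{Weak}$ \emph{every} mono--epi factorization $f=qj$ has $\mathrm{coker}\,j,\ker q\in\W$. Factor $\mathrm{id}_A$ as $A\to A\oplus X\to A$ with $X\notin\W$. Nor can the four-term sequence $0\to\ker f\to\ker q\to\mathrm{coker}\,j\to\mathrm{coker}\,f\to 0$ alone compare two factorizations, because a weak equivalence need not have kernel or cokernel in $\W$. For instance, over a field $k$ of characteristic $p$, multiplication by $g-1$ on $kC_p$ is a weak equivalence in the stable module category with kernel $k$.

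What is true, and what the two cancellation cases of 2-out-of-3 need, is the following. Suppose $f=pi$ with $i\colon A\to B$ mono, $p\colon B\to C$ epi and $\mathrm{coker}\,i,\ker p\in\W$. Then for any other factorization $f=qj$ with $j\colon A\to E$ mono and $q$ epi, one has $\ker q\in\W\iff\mathrm{coker}\,j\in\W$. Proving this requires actually comparing the two factorizations, e.g.\ via the pushout $P=B\sqcup_A E$ and the induced epimorphism $r\colon P\to C$. These yield exact sequences $0\to\ker q\to\ker r\to\mathrm{coker}\,i\to0$ and $0\to\ker p\to\ker r\to\mathrm{coker}\,j\to0$.

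With this in hand, the cancellation cases go through. If $g$ and $gf$ are weak equivalences, factor $f=qj$ as a cofibration followed by a trivial fibration. Once composition is settled, $gq$ is weak, so write $gq=pi$. Then $gf=p(ij)$ forces $\mathrm{coker}(ij)\in\W$, hence $\mathrm{coker}\,j\in\W$. The other cancellation case is dual. Closure under composition needs only your snake-lemma argument, applied to $ip$ factored as a cofibration followed by a trivial fibration.

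There are also two smaller points. In (1)$\Rightarrow$(2), reducing thickness to ``a monomorphism with trivial cokernel, resp.\ an epimorphism with trivial kernel, is a weak equivalence'' is right and suffices for all three cases. The suggested mechanism, however, does not work as stated: functorial factorizations do not turn a short exact sequence into a short exact sequence of bifibrant replacements. No homotopy-equivalence argument is needed. Instead, pull $0\to A\to B\to C\to 0$ back along a special precover $Q\to C$ for $(\C,\W\cap\F)$. This is a trivial fibration, so $Q\in\C\cap\W$ when $C\in\W$. Then $A\to B$ factors as the trivial cofibration $A\to B\times_C Q$ followed by the trivial fibration $B\times_C Q\to B$. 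Dually, push out along a special preenvelope of the kernel for $(\C\cap\W,\F)$.

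Finally, your generating sets built from monomorphisms $P'\hookrightarrow P$ with $P$ projective, and the appeal to \cref{rem:complete}, require $\A$ to be Grothendieck with enough projectives. That holds for $\Mod(kG)$, the only case the paper uses. For an arbitrary complete and cocomplete $\A$ these tools are unavailable, and Hovey's own notion of a small cotorsion pair builds the needed smallness and generator conditions into the hypothesis.
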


\begin{definition}
\label{def:hovey_triple}
In the context of Theorem~\ref{thm:Hovey} we say that $(\C,\W,\F)$ is a \textit{Hovey triple} on the abelian category $\A$.
\end{definition}


\begin{remark}
The converse of the last sentence of Theorem \ref{thm:Hovey} is also valid if $\A$ is Grothendieck (\cite[Proposition~1.2.7]{Bec14}), or even a locally presentable abelian category (\cite[Lemma~3.7]{PoSt22}). Hence in these cases combinatorial abelian model structures correspond to Hovey triples whose cotorsion pairs are small.
\end{remark}

\begin{definition}
    A Hovey triple $(\C,\W,\F)$ on $\A$ is called \textit{hereditary} if both cotorsion pairs $(\C,\W\cap \F)$ and $(\C\cap \W,\F)$ are hereditary. This is equivalent to asking that $\C$ is closed under kernels of epimorphisms  in $\A$, or that $\F$ is closed under cokernels of monomorphisms in $\A$; see for instance \cite[Corollary~1.1.12]{Bec14}.
\end{definition}

Recall that an extension-closed subcategory of an abelian category is called \textit{Frobenius}, if it has enough (relative) projectives and enough (relative) injectives such that these classes actually coincide.

Let us record the following result which corresponds to \cite[Proposition 5.2]{Gil11} { and }\cite[Theorem~2.6]{Gil_survey}.

\begin{proposition}
\label{prop:her_cat}
{Let $(\C,\W,\F)$ be a hereditary Hovey triple on a complete and cocomplete abelian category $\A$.} 
Then the full subcategory $\C\cap \F$ is a Frobenius exact category with the exact structure inherited from $\A$. The  projective-injective objects of $\C\cap\F$ are given by $\C\cap\W\cap\F$. { In addition}, the stable category of the Frobenius category $\C\cap\F$, which is triangulated by \cite{Happel}, is equivalent to the homotopy category of the corresponding abelian model structure on $\A$. 
\end{proposition}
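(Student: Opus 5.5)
The plan is to verify the Frobenius axioms directly from the two complete cotorsion pairs, and then to compare the stable category with the homotopy category via the description of $\mathrm{Ho}(\A)$ as bifibrant objects modulo homotopy.

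\emph{Frobenius structure.} First, $\C\cap\F$ is extension closed, since both $\C={}^\perp(\W\cap\F)$ and $\F=(\C\cap\W)^\perp$ are left/right $\Ext^1$-orthogonals. So it inherits an exact structure from $\A$.

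For enough projectives, take $X\in\C\cap\F$ and use completeness of $(\C\cap\W,\F)$. This gives $0\to K\to P\to X\to 0$ with $P\in\C\cap\W$ and $K\in\F$. Then:
\begin{itemize}
\item $P\in\F$, because $\F$ is extension closed;
\item $K\in\C$, because in a hereditary triple $\C$ is closed under kernels of epimorphisms.
\end{itemize}
Thus $P\in\C\cap\W\cap\F$ and $K\in\C\cap\F$. Objects of $\C\cap\W\cap\F$ are projective in $\C\cap\F$, since $\Ext^1(\C\cap\W,\F)=0$.

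Dually, for enough injectives, use completeness of $(\C,\W\cap\F)$ to get $0\to X\to I\to Q\to 0$ with $I\in\W\cap\F$ and $Q\in\C$. Then $I\in\C$ by extension closure, and $Q\in\F$ by closure of $\F$ under cokernels of monomorphisms. The objects of $\C\cap\W\cap\F$ are injective in $\C\cap\F$ because $\Ext^1(\C,\W\cap\F)=0$.

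Conversely, suppose $X$ is projective in $\C\cap\F$. Then the first sequence splits, so $X$ is a summand of $P$ and hence lies in $\W$, since $\W$ is thick. The same argument with the second sequence shows every injective lies in $\C\cap\W\cap\F$. Hence projectives and injectives coincide and equal $\C\cap\W\cap\F$, and Happel's theorem makes $\mathsf{St}(\C\cap\F)$ triangulated.

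\emph{Comparison with the homotopy category.} By general model category theory, $\mathrm{Ho}(\A)$ is equivalent to the category of bifibrant objects $\C\cap\F$ modulo the homotopy relation. So it suffices to show that, for $X,Y\in\C\cap\F$, a map $f\colon X\to Y$ is homotopic to $0$ if and only if $f$ factors through an object of $\C\cap\W\cap\F$. Additivity of the homotopy relation on bifibrant objects then reduces the general case $f\sim g$ to $f-g\sim 0$.

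This identification is the step I expect to be the main obstacle, since it requires choosing good path objects in the abelian model structure.
\begin{itemize}
\item For ``$\Leftarrow$'': an object of $\C\cap\W\cap\F$ is isomorphic to $0$ in $\mathrm{Ho}(\A)$. So $f$ becomes zero there, and since $X$ is cofibrant and $Y$ fibrant, $f$ is homotopic to $0$.
\item For ``$\Rightarrow$'': build a path object for $Y$ from the sequence $0\to K\to P\to Y\to 0$ above. The map $(p,0)\colon P\oplus Y\to Y\oplus Y$ modified by the diagonal gives a factorization $Y\to P\oplus Y\to Y\times Y$. Here the first map is a weak equivalence, because its cokernel is $P\in\W$, and the second is a fibration with kernel $K\in\F$. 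A right homotopy from $f$ to $0$ through this path object then exhibits $f$ as factoring through $p\colon P\to Y$, with $P\in\C\cap\W\cap\F$.
\end{itemize}
Since $X$ is cofibrant and $Y$ fibrant, any homotopy can be realized through this fixed path object, which completes the argument.
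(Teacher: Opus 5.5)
Your proof is correct. The paper does not prove this proposition itself; it quotes it from Gillespie (\cite[Proposition~5.2]{Gil11}, \cite[Theorem~2.6]{Gil_survey}). What you have written is a self-contained version of the standard argument behind those references.

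Compared with the bare citation, your version shows where each hypothesis enters. Heredity is used only to make the two approximation sequences conflations inside $\C\cap\F$, that is, to get $K\in\C$ and $Q\in\F$. The description of the homotopy relation uses only completeness of $(\C\cap\W,\F)$ and fibrancy of $Y$. So the identification of $\mathrm{Ho}(\A)$ with bifibrant objects modulo maps factoring through $\C\cap\W\cap\F$ holds for any abelian model structure. Heredity is what makes this quotient the stable category of a Frobenius category.

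Two small points.

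First, write the path object out explicitly: $s\colon Y\to P\oplus Y$, $y\mapsto(0,y)$, and $q\colon P\oplus Y\to Y\oplus Y$, $(x,y)\mapsto(y,\,p(x)+y)$. Then $s$ is a trivial cofibration, since its cokernel $P$ lies in $\C\cap\W$, and $q$ is an epimorphism with kernel $K\oplus 0\in\F$. With this choice you get directly, for $f,g\colon X\to Y$, that $f\sim g$ if and only if $g-f=p\lambda$ for some $\lambda\colon X\to P$: the right homotopy is $(\lambda,f)$. Every map $W\to Y$ with $W\in\C\cap\W$ lifts along $p$, because $\Ext^1(W,K)=0$. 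Hence $f\sim g$ holds exactly when $g-f$ factors through an object of $\C\cap\W\cap\F$, and you no longer need the appeal to additivity of the localization functor.

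Second, your argument produces an equivalence of categories, which is exactly what the statement asserts. If you want this equivalence to respect the triangulated structure that $\mathrm{Ho}(\A)$ carries as the homotopy category of a stable model category, you would still have to compare suspensions and distinguished triangles. That is beyond what you, and the statement, address.
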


\subsection{Generalities on group algebras} 
Our main reference for this subsection is \cite{benson97}.

\begin{recollection}\label{rec-adjuntions-mod}
The category $\Mod(kG)$ is a closed symmetric monoidcal category where the monoidal product is given by the tensor product $\otimes_k$ of $kG$--modules with diagonal $G$-action and the monoidal unit is $k$ with the trivial action of $G$. In particular, for any $kG$--module $x$ the endofunctor $x\otimes_k-$ has a right adjoint $\hom_k(x,-)$. In case the ring of coefficients is clear from the context, we just denote these functors by $x\otimes-$ and $\hom(x,-)$. 

 For any group homomorphism  $i\colon H\to G$, restricting the action of a $kG$--module along the map $i$ gives rise to the so-called restriction along $i$ functor, which is denoted by $\res_i$. Moreover, this functor has a left and a right adjoint 
\begin{center}
          \begin{tikzcd}[column sep=large, row sep=large]
     \Mod(kH) \arrow[r,"\ind_i", yshift=3mm] \arrow[r, yshift=-3mm, "\mathrm{coind}_i"'] & \Mod(kG) \arrow[l, "\res_i" description] 
    \end{tikzcd}
    \end{center}
and one refers to $\ind_i$ and $\mathrm{coind}_i$ as the induction along $i$ functor and coinduction along $i$ functor, respectively. Note that this is just a particular case of (co)extension of scalars in ring theory for the map of rings $kH\to kG$ induced by $i$. 
\end{recollection}

\begin{notation}
    In case $H\leqslant G$ is a subgroup, we write  $\mathrm{res}^G_H(-)$ or $- \downarrow_H^G$ to denote the restriction functor, and similarly for the induction and coinduction functors. In particular,  $\ind_H^G$ is given by  $RG\otimes_{kH}-$  (also denoted by $- \uparrow^G_H$) and the coinduction functor $\mathrm{coind}_H^G$ corresponds to   $\Hom_{kH}(kG,-)$ (also denoted by $- \Uparrow^G_H$).
\end{notation}

\begin{recollection}
 Let $H$ be a subgroup of $G$. Given a $kG$--module $x$ and a  $kH$--module $y$, we have the following isomorphisms of $kG$--modules: 
 \[
 x\otimes \ind_H^G y\simeq \ind_H^G(\res^G_H (x)\otimes y).
 \]
 This is known as the  \textit{projection formula}; see \cite[Proposition 3.3.3]{Ben98}.
\end{recollection}

\begin{recollection}
  Let  $H$ and $K$ be subgroups of $G$. One can describe what happens to a module induced from $K$ and then restricted to $H$ by means of the \textit{Mackey formula} (also known as the \textit{double coset formula}): Let $x$ be a $kK$--module. Then we have an isomorphisms of $kH$--modules
  \[
   \res^G_H\ind_K^Gx\simeq \bigoplus_{g\in H\backslash G/K} \ind_{{}^g\!K\cap H}^H \res^{{}^g\!K}_{{}^g\!K\cap H}({}^g\!x)
  \]
  where ${}^g\!K=gKg^{-1}$, and ${}^g\!x$ denotes the $k({}^g\!K)$--module obtained from $x$ via  restriction along the conjugation map $c^\ast_g$. 
\end{recollection}

\subsection{Modules of bounded functions}
Let $B(G,k)$ denote the set of bounded functions from $G$ to $k$, that it, the functions which take only finitely many distinct values. Notice that $B(G,k)$ is a $kG$--module by the rule $gf(x)=f(g^{-1}x)$. In addition, it is a commutative ring with pointwise addition and multiplication, in fact, the ring homomorphism $\iota\colon k\rightarrow B(G,k)$ which sends an element $r\in k$ to the constant function at $k$ makes $B(G,k)$ a commutative $k$--algebra. For $k=\mathbb{Z}$ the map $\iota$ is $\mathbb{Z}G$--linear, in fact, the short exact sequence of $\mathbb{Z}G$--modules
\begin{equation}
0\rightarrow \mathbb{Z}\xrightarrow{\iota} B(G,\mathbb{Z}) \xrightarrow{\pi} C\rightarrow 0 \nonumber
\end{equation}
splits over $\mathbb{Z}$; the map $s\colon B(G,\mathbb{Z})\rightarrow \mathbb{Z};\, f\mapsto f(1)$ provides a $\mathbb{Z}$--splitting of $\iota$. The induced short exact sequence of (diagonal) $kG$--modules
\begin{equation}
\label{eq:B_sequence}
0\rightarrow \mathbb{Z}\otimes_{\mathbb{Z}}k \xrightarrow{\iota\otimes k} B(G,\mathbb{Z})\otimes_{\mathbb{Z}}k \xrightarrow{\pi\otimes k} C\otimes_{\mathbb{Z}}k\rightarrow 0
\end{equation}
splits over $k$; the map $s\otimes k$ provides a $k$--splitting of $\iota\otimes k$.
Notice that there is an isomorphism $B(G,\mathbb{Z})\otimes_{\mathbb{Z}}k\cong B(G,k)$ of $kG$--modules.
In addition, $B(G,k)$ is a free $k$--module, and for any finite subgroup $H\leqslant  G$ the $RH$--module $\res_H^GB(G,k)$ is free, see \cite[Lemma~3.4]{benson97}. Sometimes we just write $B$ instead of $B(G,k)$. 

\medskip
The following concept was introduced in \cite[Definition~4.1]{benson97}.

\begin{definition}
\label{def:cofibrants}
A $kG$--module $x$ is called \textit{cofibrant} if $B(G,k)\otimes_kx$ is a projective $kG$--module. We will denote by $\mathsf{Cof}(kG)$ the class of cofibrant $kG$--modules and by $\mathsf{cof}(kG)$ the class of cofibrant $kG$--modules which are finitely generated as $k$--modules.
\end{definition}

Let us recall the following result which corresponds to \cite[Proposition 4.12]{ET_cof}.

\begin{proposition}\label{prop-model-struc}
The triple  $(\mathrm{Cof}(kG),\mathrm{Cof}(kG)^\perp,\Mod(kG))$ {is a hereditary Hovey triple} which determines a combinatorial abelian model structure on $\Mod(kG)$ whose homotopy category is {equivalent to} the stable category of the Frobenius exact category $\mathrm{Cof}(kG)$, which is denoted by $\mathsf{St}\mathrm{Cof}(kG)$.  
\end{proposition}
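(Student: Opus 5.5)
The proof has three parts. First build a small, complete, hereditary cotorsion pair $(\mathrm{Cof}(kG),\mathrm{Cof}(kG)^\perp)$. Then feed it to \cref{thm:Hovey} together with the trivial pair. Finally identify the homotopy category via \cref{prop:her_cat}.

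\textbf{Step 1: the cotorsion pair is small and complete.} I take as generators the set $\S$ of Benson cofibrant modules of cardinality at most some $\kappa$, where $\kappa\geq |G|+|k|+\aleph_0$. The goal is to show $\mathrm{Cof}(kG)={}^{\oplus}\mathrm{Filt}(\S)$. For this I would use a Kaplansky-type filtration argument. Given $x\in\mathrm{Cof}(kG)$, the module $B\otimes_k x$ is projective, hence a direct sum of countably generated projectives by Kaplansky's theorem. I then build a continuous chain of $kG$-submodules $x_\alpha\subseteq x$ with small successive quotients. At each stage I close off so that $B\otimes x_\alpha$ is a direct summand of $B\otimes x$, compatible with a fixed Kaplansky decomposition. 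Each $B\otimes x_\alpha$ and $B\otimes(x_{\alpha+1}/x_\alpha)$ is then projective, using that $B$ is $k$-free and so $B\otimes-$ is exact. I also need the filtration pieces to be $k$-split or $k$-flat; here I would intersect with a filtration making $x_\alpha$ a $k$-pure submodule, since cofibrants are $k$-projective.

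Conversely, ${}^{\oplus}\mathrm{Filt}(\S)\subseteq\mathrm{Cof}(kG)$ holds because $B\otimes-$ preserves transfinite extensions and summands, and projectives are closed under these. Hence $\mathrm{Cof}(kG)={}^\perp(\S^\perp)$, giving a cotorsion pair generated by a set. By \cref{rem:complete} it is complete.

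\textbf{Step 2: the pair is hereditary.} Let $0\to c'\to c\to c''\to 0$ be exact with $c,c''$ cofibrant. Since $c''$ is $k$-projective, the sequence is $k$-split, so tensoring with $B$ gives a split sequence of projectives. Therefore $B\otimes c'$ is projective and $c'$ is cofibrant. Thus $\mathrm{Cof}(kG)$ is closed under kernels of epimorphisms.

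\textbf{Step 3: assemble the model structure.} Set $\C=\mathrm{Cof}(kG)$, $\W=\C^\perp$, $\F=\Mod(kG)$. The two pairs \cref{thm:Hovey} needs are $(\C,\W)$, from Steps 1--2, and $(\C\cap\W,\Mod(kG))$. The key check is $\C\cap\W=\mathrm{Proj}(kG)$, so that the second is the projective cotorsion pair: complete, small (generated by $\{0\}$) and hereditary. Projectives lie in $\C$ because $B\otimes kG$ is free. They lie in $\W$ by heredity. Conversely, if $c\in\C\cap\W$, choose a sequence $0\to c\to p\to c'\to 0$ with $p$ projective and $c'\in\C$; this should come from $c\to B\otimes c$ and an iteration, using that $B\otimes c$ is projective and the cokernel $C\otimes c$ is cofibrant. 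This sequence splits because $c\in\C^\perp$, so $c$ is projective.

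Thickness of $\W$ is the delicate point, and I expect it to be the main obstacle. Closure under summands and extensions is formal, and closure under cokernels of monomorphisms follows from heredity. Closure under kernels of epimorphisms needs $\mathrm{Ext}^{\geq1}(\C,-)$-vanishing to propagate downwards. For this I would show every cofibrant is a syzygy of a cofibrant, which the sequence above provides. That gives $\mathrm{Ext}^1(c,w')\cong \mathrm{Ext}^2(c',w')$, and the long exact sequence then closes the argument.

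With thickness in hand, \cref{thm:Hovey} together with smallness gives a combinatorial abelian model structure, hereditary by Step 2. Finally, \cref{prop:her_cat} identifies its homotopy category with $\mathsf{St}\,\mathrm{Cof}(kG)$, since $\C\cap\F=\mathrm{Cof}(kG)$ with projective-injectives $\C\cap\W=\mathrm{Proj}(kG)$.
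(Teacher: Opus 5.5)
The paper does not prove this proposition; it quotes it from \cite[Proposition~4.12]{ET_cof}. Your self-contained outline is reasonable: Kaplansky-type deconstruction for smallness, closure under kernels of epimorphisms for heredity, and the sequence $0\to c\to B\otimes c\to C\otimes c\to 0$ for thickness and for $\mathcal{C}\cap\mathcal{W}\subseteq\mathrm{Proj}(kG)$. But there is a genuine gap in Step~3, at the inclusion $\mathrm{Proj}(kG)\subseteq\mathrm{Cof}(kG)^\perp$, which you justify only ``by heredity''.

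Heredity says $\mathrm{Ext}^{i}_{kG}(\mathcal{C},\mathcal{W})=0$ for $i\geq 1$. It says nothing about which modules lie in $\mathcal{W}$. For example, $(\Mod(R),\mathrm{Inj}(R))$ is hereditary and has all projectives on the left, but projectives need not lie on the right. Your sequence does not rescue this either. Dimension shifting along it only gives $\mathrm{Ext}^1(c,p)\cong\mathrm{Ext}^{n+1}(c_n,p)$ with $c_n$ cofibrant, which never terminates unless $p$ has finite injective dimension. The inclusion is not a formality. In your set-up every cofibrant is a syzygy in an acyclic complex of projectives with cofibrant syzygies. So $\mathrm{Ext}^1_{kG}(\mathrm{Cof}(kG),\mathrm{Proj}(kG))=0$ is equivalent to the Cornick--Kropholler theorem $\mathrm{Cof}(kG)\subseteq\mathrm{GProj}(kG)$ \cite{CP}. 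Without it you do not know that $(\mathcal{C}\cap\mathcal{W},\Mod(kG))$ is a cotorsion pair, so \cref{thm:Hovey} does not apply.

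You can close the gap using the ring structure of $B=B(G,k)$.
\begin{itemize}
\item The unit $\eta_X\colon X\to B\otimes X$, $x\mapsto 1\otimes x$, is a natural $kG$-map.
\item Multiplication $\mu\colon B\otimes B\otimes N\to B\otimes N$ is $kG$-linear, because $G$ acts on $B$ by ring automorphisms, and $\mu\circ\eta_{B\otimes N}=\mathrm{id}$.
\item Take $c$ cofibrant and $\xi\in\mathrm{Ext}^n_{kG}(c,B\otimes N)$ with $n\geq 1$. Naturality of $\eta$ with respect to the exact functor $B\otimes-$ gives $(\eta_{B\otimes N})_*\xi=\eta_c^*(B\otimes\xi)$.
\item Here $B\otimes\xi$ lies in $\mathrm{Ext}^n_{kG}(B\otimes c,-)=0$, so $\xi=\mu_*(\eta_{B\otimes N})_*\xi=0$.
\item A projective $p$ is a summand of $B\otimes p$. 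Indeed, the exact sequence $0\to p\to B\otimes p\to (C\otimes_{\mathbb Z}k)\otimes p\to 0$ has projective right-hand term by \cref{prop-tensor-proje-latti}, so it splits. Hence $p\in\mathrm{Cof}(kG)^\perp$.
\end{itemize}
The same proposition also justifies your unproved claim that $C\otimes c$ is cofibrant, since $B\otimes(C\otimes c)\cong(C\otimes_{\mathbb Z}k)\otimes(B\otimes c)$.

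Two smaller points. First, in Step~1 your closure argument must account for all of $B\otimes y$. It therefore only bounds the filtration pieces by $|B(G,k)|$, which is $2^{|G|}$ for infinite $G$. So $\kappa\geq|G|+|k|+\aleph_0$ is too small as stated; take $\kappa\geq|B(G,k)|$ instead, since any generating set will do. Second, the $k$-purity detour is unnecessary: $B$ is $k$-free, so $B\otimes_k-$ is already exact.
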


\begin{remark}
\label{rem:properties of cofs}
    From the previous result it follows that the class $\mathrm{Cof}(kG)$ is closed under coproducts, extensions, kernels of epimorphisms and contains all the projective $kG$--modules.   
\end{remark}

We aim to illustrate how cofibrant objects behave with respect to induction and restriction functors. We need some preparation. 

\begin{proposition}\label{prop-tensor-proje-latti}
    Let $G$ be a group, $x$ be a $kG$--module which is projective as $k$--module and $y$ be a projective $kG$--module. Then the module $x\otimes y$ is projective over $kG$.  
\end{proposition}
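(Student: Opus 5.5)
Since $y$ is a projective $kG$--module, it is a direct summand of a free module $\bigoplus_{I} kG$. The functor $x\otimes_k -$ (diagonal action) commutes with direct sums and preserves direct summands. Projectivity is closed under both operations, so it suffices to treat the case $y=kG$.

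The key step is the standard untwisting isomorphism of $kG$--modules
\[
x\otimes_k kG \;\cong\; \ind_1^G \res^G_1(x) = kG\otimes_k x_{\mathrm{triv}},
\]
where on the right $G$ acts only on the $kG$ factor. I would write down the explicit maps $\phi\colon x\otimes kG\to kG\otimes x_{\mathrm{triv}}$, $m\otimes g\mapsto g\otimes g^{-1}m$, and $\psi(g\otimes m)=gm\otimes g$. One checks that $\phi$ intertwines the diagonal action with the left action: $h\cdot(m\otimes g)=hm\otimes hg\mapsto hg\otimes g^{-1}m$. One also checks that $\psi$ is inverse to $\phi$. Alternatively, this is the projection formula recalled above, applied with $H=1$ and $y=k$: $x\otimes\ind_1^G k\simeq \ind_1^G(\res^G_1 x)$.

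It remains to note that $\ind_1^G$ carries projective $k$--modules to projective $kG$--modules. Indeed, $\ind_1^G=kG\otimes_k-$ sends $k$ to $kG$, commutes with direct sums, and preserves summands. Equivalently, $\ind_1^G$ is left adjoint to the exact functor $\res^G_1$, hence preserves projectives. Since $\res^G_1 x$ is projective over $k$ by hypothesis, $x\otimes kG$ is projective over $kG$, which concludes the argument. There is no real obstacle; the only point needing care is the bookkeeping of the diagonal versus untwisted action in the isomorphism.
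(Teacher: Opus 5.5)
Your proof is correct, but it takes a different route from the paper's. The paper gives a one-line, basis-free argument. Using the closed monoidal structure on $\Mod(kG)$, it writes $\Hom_{kG}(x\otimes y,-)\cong\Hom_{kG}(y,\hom(x,-))$ and observes that the right-hand side is a composite of exact functors. Indeed, $\hom_k(x,-)$ is exact because $x$ is $k$-projective (exactness of $kG$-modules is checked on underlying $k$-modules), and $\Hom_{kG}(y,-)$ is exact because $y$ is $kG$-projective.

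You instead reduce to $y=kG$ and make the untwisting isomorphism $x\otimes kG\cong kG\otimes_k x_{\mathrm{triv}}$ explicit, i.e.\ the projection formula for $H=1$. Your maps $\phi$ and $\psi$ are correctly checked, as is the summand/direct-sum reduction. The paper's proof is essentially the adjoint form of your isomorphism, since $\Hom_{kG}(x\otimes kG,-)\cong\Hom_k(x,-)$. By working through the adjunction $x\otimes-\dashv\hom(x,-)$, it avoids the reduction to free modules and hides the bookkeeping of diagonal versus untwisted actions.

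Your route gives more structural information. It exhibits $x\otimes y$ as a summand of a module induced from the trivial subgroup, so for example $x\otimes y$ is $kG$-free whenever $x$ is $k$-free and $y$ is $kG$-free. The same pattern, with the general projection formula $x\otimes\ind_H^G z\cong\ind_H^G(\res^G_H x\otimes z)$, also handles tensoring with modules induced from arbitrary subgroups.
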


\begin{proof}
    Consider the natural isomorphism 
\[
\Hom_{kG}(x\otimes y,-) \cong \Hom_{kG}(y,\hom(x,-)).
\]
Since $y$ is projective as a $kG$--module and $x$ is projective as a $k$--module, the statement follows.
\end{proof}

The next observation is pivotal for this work. It is already mentioned in \cite{BG}, but we include a proof for convenience. 

\begin{proposition}
\label{prop:cof_proj}
The cofibrant $kG$--modules are projective over the coefficient ring $k$. The converse holds in case $G$ is finite. 
\end{proposition}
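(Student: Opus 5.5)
Both directions rest on the $k$-split short exact sequence \eqref{eq:B_sequence}, $0\to k\to B\to C\otimes_{\mathbb Z}k\to 0$, where $B=B(G,k)$.

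\emph{Cofibrant implies $k$-projective.} Let $x$ be cofibrant, so $B\otimes x$ is projective over $kG$. Restriction along $k\to kG$ preserves projectives, since $kG$ is free over $k$. Hence $B\otimes x$ is projective over $k$. Tensoring the sequence \eqref{eq:B_sequence} with $x$ over $k$ gives a sequence $0\to x\to B\otimes x\to (C\otimes_{\mathbb Z}k)\otimes x\to 0$, and it remains split over $k$ because the original sequence is $k$-split. So $x$ is a $k$-direct summand of the $k$-projective module $B\otimes x$, and therefore $x$ is $k$-projective.

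\emph{The converse for $G$ finite.} Here every function on $G$ is bounded, so $B(G,k)=k^G$, the coinduced module $\mathrm{coind}_1^G k$. Since $G$ is finite, this is isomorphic to $\ind_1^G k=kG$. In other words $B$ is free of rank one over $kG$, which also follows from \cite[Lemma~3.4]{benson97} applied with $H=G$. Now take $x$ projective over $k$. By \cref{prop-tensor-proje-latti} with $y=B\cong kG$, the module $B\otimes x$ is projective over $kG$, so $x$ is cofibrant.

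The only point that needs any care is the identification $B(G,k)\cong kG$ for finite $G$. One can check it directly: the map $\sum_g a_g g\mapsto(h\mapsto a_{h})$ is $G$-equivariant for the action $gf(x)=f(g^{-1}x)$. One can also simply cite the freeness of $\res^G_H B$ for finite $H\leqslant G$ recalled above. Everything else is formal.
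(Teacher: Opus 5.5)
Your proof is correct and follows the paper's argument. For the forward direction you tensor the $k$-split sequence \eqref{eq:B_sequence} with $x$ to exhibit $x$ as a $k$-summand of the $k$-projective module $B\otimes x$. For the converse you use that $B$ is $kG$-free when $G$ is finite and apply \cref{prop-tensor-proje-latti}; your explicit equivariant isomorphism $B(G,k)\cong kG$ is a correct extra detail that the paper leaves implicit.
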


\begin{proof}
Let $x$ be a cofibrant $kG$--module. We may apply the functor $-\otimes x$ to the short exact sequence \eqref{eq:B_sequence} and deduce the following $k$-split short exact sequence of diagonal $kG$--modules,
\begin{equation}
\label{eq:B_sequence2}
0\rightarrow k\otimes x \rightarrow B\otimes x \rightarrow (C\otimes_{\mathbb{Z}}k)\otimes x\rightarrow 0. \nonumber
\end{equation}
Since $B\otimes_kx$ is a projective $kG$--module it is also projective as $k$--module. 
Thus, $x$ is projective as $k$--module. Now assume that $G$ is finite and that $x$ is a $kG$--module which is projective over $k$. In this case, $B$ is a projective $kG$--module and hence $B\otimes x$ is projective as well by \cref{prop-tensor-proje-latti}.  It follows that $x$ is cofibrant.
\end{proof}

\begin{remark}
\label{rem:cof_fg}
It follows by  \cref{prop:cof_proj} that the modules in $\mathsf{cof}(kG)$ as in  \cref{def:cofibrants} are finitely generated as $kG$--modules. 
\end{remark}

\begin{proposition}
\label{prop:cof_ind_kes}
Consider a finite subgroup $H\leqslant G$. Let $y$ be a cofibrant $kG$--module and $x$ a cofibrant $kH$--module. Then the following hold:
\begin{itemize}
\item[(i)] The module $\res^G_H y$ is $kH$--cofibrant. 
\item[(ii)] The $kH$--module $x\otimes \res^G_HB$ is projective.
\item[(iii)] The induced $kG$--module $\ind^G_H x$ is  cofibrant.
\end{itemize}
\end{proposition}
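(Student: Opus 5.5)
Part (i) follows directly from \cref{prop:cof_proj}. If $y$ is a cofibrant $kG$--module, then it is projective as a $k$--module. Restriction does not change the underlying $k$--module, so $\res^G_H y$ is projective over $k$. Since $H$ is finite, the converse part of \cref{prop:cof_proj} shows that $\res^G_H y$ is $kH$--cofibrant. Alternatively, one could restrict the projective $kG$--module $B\otimes y$ to $H$; this gives a projective $kH$--module, since $kG$ is free over $kH$. However, relating $\res^G_H B(G,k)$ to $B(H,k)$ is less direct, so the first route is preferable.

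For (ii), \cref{prop:cof_proj} shows that $x$ is projective over $k$. By \cite[Lemma~3.4]{benson97}, recalled in the subsection on bounded functions, $\res^G_H B$ is a free $kH$--module. Applying \cref{prop-tensor-proje-latti} to the group $H$, with the $k$--projective module $x$ and the projective $kH$--module $\res^G_H B$, shows that $x\otimes \res^G_H B$ is projective over $kH$.

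For (iii), we need $B\otimes \ind^G_H x$ to be a projective $kG$--module. The projection formula gives
\[
B\otimes \ind^G_H x \cong \ind^G_H\bigl(\res^G_H(B)\otimes x\bigr).
\]
By (ii), the module $\res^G_H(B)\otimes x$ is projective over $kH$. Induction $\ind^G_H = kG\otimes_{kH}-$ is left adjoint to the exact functor $\res^G_H$, so it preserves projectives. Hence $B\otimes\ind^G_H x$ is projective over $kG$, and $\ind^G_H x$ is cofibrant.

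None of these steps is a genuine obstacle. The one point needing care is the projection formula: it is an isomorphism of $kG$--modules with the diagonal action, which is exactly the action used in \cref{def:cofibrants}. Part (ii) carries the real input, namely the freeness of $\res^G_H B$, and (iii) then follows formally from it.
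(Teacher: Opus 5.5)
Your proposal is correct and follows the paper's proof essentially step for step. Part (i) applies \cref{prop:cof_proj} in both directions, part (ii) applies \cref{prop-tensor-proje-latti} using the freeness of $\res^G_H B$, and part (iii) uses the projection formula and the fact that induction preserves projectives. The only difference is that you explicitly note that $x$ is $k$--projective in (ii), a step the paper leaves implicit.
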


\begin{proof}
(i) By  \cref{prop:cof_proj}, we know that $y$ is $k$--projective, hence so is $\res^G_Hy$. Another layer of  \cref{prop:cof_proj} gives us that $\res^G_H y$ must be cofibrant. 

(ii) This follows by  \cref{prop-tensor-proje-latti} since $\res^G_HB$ is a free $kH$--module. 

(iii) We need to prove that the $kG$--module 
\[
(kG\otimes_{kH}x) \otimes B \cong kG\otimes_{kH} (x\otimes \res^G_H B )
\]
is projective. The $kH$--module $x\otimes \res^G_H B$ is projective by (ii), thus its induction from $H$ to $G$ results in a projective $kG$--module.
\end{proof}

\begin{remark}
    Note that when $x$ is finitely generated over $k$ then so is $x\otimes B(H,k)$, but $kG\otimes_{kH}x$ is not necessarily finitely generated over $k$; it depends on the index of $H$ in $G$.
\end{remark}

\subsection{Finite groups}
In \cite[Section~2]{BBIKP25} for a $k$--algebra $A$ which is finitely generated and projective as a $k$--module, the authors denote by $\mathrm{Mod}(A,k)$ the category which consists of the $A$--modules that are projective as $k$--modules and by $\mathrm{mod}(A,k)$ the full subcategory of $\mathrm{Mod}(A,k)$ consisting of the $A$--modules that are finitely generated as $k$--modules. Notice that the modules in $\mathrm{mod}(A,k)$ will be finitely generated over $A$ too. The following is a consequence of the definitions and \cref{prop:cof_proj}.

\begin{proposition}
\label{prop:identifications}
For a finite group $G$ there are identifications $\mathrm{Mod}(kG,k)=\mathrm{Cof}(kG)$ and $\mathrm{mod}(kG,k)=\mathrm{cof}(kG)$.
\end{proposition}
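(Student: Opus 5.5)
The statement to prove is that, for $G$ finite, $\mathrm{Mod}(kG,k)=\mathrm{Cof}(kG)$ and $\mathrm{mod}(kG,k)=\mathrm{cof}(kG)$. The plan is to unwind the definitions on both sides and apply \cref{prop:cof_proj}.

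First, note that $kG$ is a $k$--algebra which is finitely generated and projective (indeed free) as a $k$--module when $G$ is finite. So the framework of \cite[Section~2]{BBIKP25} applies with $A=kG$. By definition, $\mathrm{Mod}(kG,k)$ is the full subcategory of $\Mod(kG)$ consisting of the $kG$--modules that are projective as $k$--modules.

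For the first identification, take $x\in\mathrm{Cof}(kG)$. By the first part of \cref{prop:cof_proj}, which holds for any group, $x$ is projective over $k$, so $x\in\mathrm{Mod}(kG,k)$. Conversely, if $x$ is $k$--projective and $G$ is finite, the second part of \cref{prop:cof_proj} shows that $x$ is cofibrant. The argument there runs as follows: $B(G,k)$ is $kG$--projective because $\res^G_G B$ is free by \cite[Lemma~3.4]{benson97}, and therefore $B\otimes x$ is projective by \cref{prop-tensor-proje-latti}. Hence $\mathrm{Mod}(kG,k)=\mathrm{Cof}(kG)$.

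For the second identification, we intersect both sides of the first equality with the class of $kG$--modules that are finitely generated over $k$. By \cref{def:cofibrants}, this intersection on the right is $\mathrm{cof}(kG)$, and on the left it is $\mathrm{mod}(kG,k)$ by definition. This gives $\mathrm{mod}(kG,k)=\mathrm{cof}(kG)$.

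There is no real obstacle here. The only point needing care is that $B(G,k)$ is a projective $kG$--module when $G$ is finite, and this is supplied by the cited lemma. Everything else is bookkeeping of definitions.
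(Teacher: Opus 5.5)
Your proposal is correct and matches the paper's approach. The paper states this as a direct consequence of the definitions together with \cref{prop:cof_proj}, and you add the same bookkeeping step of intersecting with the modules that are finitely generated over $k$ to get the second identification.
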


\begin{remark}
    We point out that the modules of $\mathrm{mod}(kG,k)$ are also called $kG$ \textit{lattices}. Sometimes we refer to the modules of $\mathrm{Mod}(kG,k)$ as big lattices.
\end{remark}

Combining the above observation with \cref{prop-model-struc}, we get the following result:

\begin{corollary}\label{cor-frobenius}
For a finite group $G$ the group algebra $kG$ is a Frobenius $k$--algebra in the language of \cite{BBIKP25}; in particular, both categories $\Mod(kG,k)$ and $\mathrm{mod}(kG,k)$ are Frobenius exact with pro-injective objects given by the projective $kG$--modules.  
\end{corollary}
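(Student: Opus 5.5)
The plan is to deduce the statement directly from \cref{prop-model-struc}, \cref{prop:her_cat} and \cref{prop:identifications}. Let $G$ be finite. By \cref{prop-model-struc}, the triple $(\mathrm{Cof}(kG),\mathrm{Cof}(kG)^\perp,\Mod(kG))$ is a hereditary Hovey triple. Applying \cref{prop:her_cat} with $\C=\mathrm{Cof}(kG)$, $\W=\mathrm{Cof}(kG)^\perp$ and $\F=\Mod(kG)$ shows that $\C\cap\F=\mathrm{Cof}(kG)$ is a Frobenius exact category, whose projective-injective objects are $\mathrm{Cof}(kG)\cap\mathrm{Cof}(kG)^\perp$. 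By \cref{prop:identifications}, $\mathrm{Cof}(kG)=\Mod(kG,k)$, which gives the Frobenius statement for big lattices.

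Next I identify the projective-injectives. Since $(\C,\W\cap\F)=(\mathrm{Cof}(kG),\mathrm{Cof}(kG)^\perp)$ is a cotorsion pair, $\C\cap\W$ is the class of objects of $\mathrm{Cof}(kG)$ that are Ext-orthogonal to all of $\mathrm{Cof}(kG)$. Such objects are projective in $\Mod(kG)$. To see this, take $c\in\C\cap\W$ and choose a surjection $p\to c$ with $p$ projective. Its kernel lies in $\mathrm{Cof}(kG)$, because $\mathrm{Cof}(kG)$ is closed under kernels of epimorphisms and contains projectives (\cref{rem:properties of cofs}). Hence the sequence splits, since $c\in\mathrm{Cof}(kG)^\perp$, and so $c$ is a summand of $p$. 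Conversely, projective $kG$--modules lie in $\mathrm{Cof}(kG)$ and satisfy $\Ext^1(\C,\text{proj})=0$. To check this, I use the complete cotorsion pair $(\C\cap\W,\F)=(\C\cap\W,\Mod(kG))$: it forces $\C\cap\W={}^\perp\Mod(kG)$, which is exactly the class of projectives. So the projective-injectives of $\Mod(kG,k)$ are precisely the projective $kG$--modules.

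For $\mathrm{mod}(kG,k)=\mathrm{cof}(kG)$, I show that it is an extension-closed subcategory of the Frobenius category $\mathrm{Cof}(kG)$ that has enough projectives and injectives inside itself. Extension closure holds because $k$-finite generation is preserved under extensions. Enough projectives: for a lattice $x$, the epimorphism $kG\otimes_k x\to x$ has source a finitely generated projective $kG$--module, hence a lattice since $G$ is finite, and the kernel is a lattice because the map is $k$-split. Enough injectives: the $k$-split monomorphism $x\to\hom_k(kG,x)\cong kG\otimes_k x$ works, using finiteness of $G$ for the isomorphism. In both cases the middle term is projective by \cref{prop-tensor-proje-latti}. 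Its objects are projective-injective relative to $\mathrm{cof}(kG)$ because they are so in $\mathrm{Cof}(kG)$. Finally, any projective-injective of $\mathrm{mod}(kG,k)$ is a summand of such a $kG\otimes_k x$, hence projective.

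The main subtlety, rather than a real obstacle, is the identification $\C\cap\W=$ projectives, which relies on $\F$ being all of $\Mod(kG)$. The remaining steps are routine. Matching the phrase ``Frobenius $k$--algebra in the language of \cite{BBIKP25}'' amounts to the statement that $\Mod(kG,k)$ is Frobenius with projectives equal to injectives, which is what the above establishes.
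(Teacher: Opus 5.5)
Your argument is correct in substance, but it takes a more hands-on route than the paper. The paper's proof is two lines. It invokes \cite[Lemma~2.6]{BBIKP25}, by which the Frobenius-algebra property, Frobenius exactness of $\Mod(kG,k)$ and Frobenius exactness of $\mathrm{mod}(kG,k)$ are all equivalent. It then feeds in Frobenius exactness of $\mathrm{Cof}(kG)=\Mod(kG,k)$ from \cref{prop-model-struc} and \cref{prop:identifications}.

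You obtain Frobenius exactness of $\Mod(kG,k)$ the same way, but then diverge in two places:
\begin{itemize}
\item You identify the projective-injectives explicitly through the cotorsion pair $(\C\cap\W,\Mod(kG))$.
\item You verify Frobenius exactness of $\mathrm{mod}(kG,k)$ directly, using the $k$-split conflations $x\to kG\otimes_k x$ and $kG\otimes_k x\to x$.
\end{itemize}
This is self-contained and shows exactly where finiteness of $G$ enters: the isomorphism $\hom_k(kG,x)\cong kG\otimes_k x$ and the $k$-finiteness of $kG\otimes_k x$. The paper's route is shorter because it outsources the lattice case to BBIKP.

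Your last sentence identifies ``Frobenius $k$--algebra in the language of \cite{BBIKP25}'' with Frobenius exactness of $\Mod(kG,k)$. That identification is precisely the content of \cite[Lemma~2.6]{BBIKP25}, so you should cite it rather than assert it. Alternatively, argue directly from the classical isomorphism $kG\cong\Hom_k(kG,k)$ of $kG$--modules.

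There is also one slip. In your first argument that objects of $\C\cap\W$ are projective, splitting of $0\to K\to p\to c\to 0$ is controlled by $\Ext^1_{kG}(c,K)$. Knowing $c\in\mathrm{Cof}(kG)^\perp$ and $K\in\mathrm{Cof}(kG)$ only gives $\Ext^1_{kG}(K,c)=0$, which is the wrong variable. This does not damage the proof. Your next observation, $\C\cap\W={}^\perp\Mod(kG)=\mathrm{Proj}(kG)$ (coming from the cotorsion pair $(\C\cap\W,\F)$ with $\F=\Mod(kG)$), already gives both inclusions, so simply drop the splitting argument.
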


\begin{proof}
    By \cite[Lemma~2.6]{BBIKP25}, it is enough to verify that $\Mod(kG,k)$ is Frobenius exact. By \cref{prop-model-struc} we know that $\mathrm{Cof}(kG)$ is Frobenius exact, hence the conclusion follows by \cref{prop:identifications}.
\end{proof}


As in \cite{BBIKP25} by a localizing subcategory $\mathcal{L}$ of an exact additive category $\mathcal{A}$, we mean a full subcategory which satisfies the 2-out-of-3 property for conflations and is closed under coproducts. The intersection of localizing subcategories is a localizing subcategory, hence it makes sense to consider the smallest localizing subcategory of $\mathcal{A}$ generated by a class $\mathcal{X}$ of objects of $\mathcal{A}$, which we denote by $\mathrm{Loc}(\mathcal{X})$.

\begin{recollection}\label{rec-modlf-comp}
By \cite[2.5]{BBIKP25} for a finite group $G$ the smallest localizing subcategory of $\mathrm{Mod}(kG,k)$ containing $\mathrm{mod}(kG,k)$ is denoted by $\mathrm{Mod}^{\mathrm{lf}}(kG,k)$. It is a Frobenius exact category, with the associated stable category denoted by $\mathsf{St}\mathrm{Mod}^{\mathrm{lf}}(kG,k)$. In addition, by \cite[Proposition~2.9]{BBIKP25} this triangulated category is compactly generated and its subcategory of compact objects identifies with the idempotent completion of $\mathsf{St}\mathrm{mod}^{\mathrm{lf}}(kG,k)$. In case $k$ is regular, that is, $k$ is Noetherian and $k_\pp$ has finite global dimension for all primes $\pp$ of $k$, there is an identification $\mathrm{Mod}^{\mathrm{lf}}(kG,k)=\mathrm{Mod}(kG,k)$ by \cite[Proposition~2.14]{BBIKP25}.
\end{recollection}  

A relevant consequence of the previous recollection is the following corollary that we will use later.  

\begin{corollary}\label{rec-regular-ring}
Let $G$ be a finite group and $k$ a regular commutative ring. Then $\mathsf{St}\mathrm{Cof}(kG)$ is a compactly generated triangulated category.
\end{corollary}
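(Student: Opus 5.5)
The corollary is a direct combination of \cref{prop:identifications} with \cref{rec-modlf-comp}; no new homological input is required. The strategy is to identify $\mathsf{St}\mathrm{Cof}(kG)$ with $\mathsf{St}\mathrm{Mod}^{\mathrm{lf}}(kG,k)$ and then quote compact generation of the latter from \cite[Proposition~2.9]{BBIKP25}.

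The steps, in order, are as follows.

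\begin{enumerate}
\item Since $G$ is finite, \cref{prop:identifications} gives an equality of full subcategories $\mathrm{Cof}(kG)=\mathrm{Mod}(kG,k)$ inside $\Mod(kG)$.

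\item Both classes carry the exact structure inherited from $\Mod(kG)$: the conflations are the short exact sequences of $kG$--modules with all terms in the class. So the equality in step 1 is an equality of exact categories.

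\item By \cref{cor-frobenius} and \cref{prop-model-struc}, this exact category is Frobenius, and its projective-injective objects are exactly the projective $kG$--modules. Hence the stable categories agree as triangulated categories: $\mathsf{St}\mathrm{Cof}(kG)=\mathsf{St}\mathrm{Mod}(kG,k)$.

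\item Since $k$ is regular, \cite[Proposition~2.14]{BBIKP25}, recalled in \cref{rec-modlf-comp}, gives $\mathrm{Mod}^{\mathrm{lf}}(kG,k)=\mathrm{Mod}(kG,k)$. This is an equality of full subcategories of $\mathrm{Mod}(kG,k)$, with the same inherited exact structure and the same projective-injectives, namely the projective $kG$--modules. Therefore $\mathsf{St}\mathrm{Mod}^{\mathrm{lf}}(kG,k)=\mathsf{St}\mathrm{Mod}(kG,k)$.

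\item By \cite[Proposition~2.9]{BBIKP25}, also recalled in \cref{rec-modlf-comp}, $\mathsf{St}\mathrm{Mod}^{\mathrm{lf}}(kG,k)$ is compactly generated. Its compact generators come from the images of the lattices in $\mathrm{mod}(kG,k)=\mathrm{cof}(kG)$.

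\item Combining steps 3–5, $\mathsf{St}\mathrm{Cof}(kG)$ is compactly generated.
\end{enumerate}

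The only point needing care is step 2 together with the bookkeeping in step 4: checking that the several stable categories involved are formed from the same exact structure with the same class of projective-injectives, so the identifications really are triangle equivalences and not just equalities of object classes. This follows from \cref{cor-frobenius}, where the projective-injectives are the projective $kG$--modules in each description. I expect no genuine obstacle here.
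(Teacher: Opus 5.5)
Your proposal is correct and follows the paper's own argument. The paper's proof is just the combination of \cref{prop:identifications} with \cref{rec-modlf-comp}, that is, $\mathrm{Cof}(kG)=\mathrm{Mod}(kG,k)=\mathrm{Mod}^{\mathrm{lf}}(kG,k)$ for regular $k$, followed by compact generation of the latter. Your bookkeeping about matching exact structures and projective-injectives makes explicit what the paper leaves implicit.
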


\begin{proof}

Follows from \cref{rec-modlf-comp} together with  \cref{prop:identifications}.
\end{proof}


\section{The proper model structure}
\label{sec-proper}

Let $G$ be  a group and $k$ be a commutative ring. We write $\Fin(G)$ to denote the family of all finite subgroups of $G$. Recall from \cref{prop-model-struc} the Hovey triple $(\Cof(kG),\Cof(kG)^\perp,\Mod(kG))$ and its corresponding abelian model structure. 

Our goal is to transfer this abelian model structure along the family of adjunctions 
\[\{\res^G_F\colon \Mod(kG) \rightleftarrows \Mod(kF) \colon\ind^G_F\}_{F\in\Fin(G)}.\]
This is achieved with \cref{thm-proper-coto} below. 
For a general discussion on transferring Hovey triples along a single adjunction the reader may consult \cite[Section~5]{Dal-Psa}. Here we give a quick and independent proof of  \cref{thm-proper-coto} without referring to results from  \cite{Dal-Psa}.

\begin{definition}\label{def-prop-model}
    We let $\Wfin$ denote the class of $kG$--modules $x$ such that $\res^G_Fx\in \Cof(kF)^\perp$ for all $F\in \Fin(G)$. 
    We write $\mathrm{PCof}(kG)$ to denote ${}^\perp\Wfin$ and refer to it as the class of \textit{proper cofibrant objects}.  
\end{definition}

\begin{proposition} 
\label{rem:finite_W}
If $G$ is a finite group then $\Wfin=\Cof(kG)^\perp$; in particular, $\mathrm{PCof}(kG)=\mathrm{Cof}(kG)$. In addition, for any group $\Gamma$ one has $x\in\W_{\Gamma}$ if and only if $\res^{\Gamma}_F(x)\in\mathcal{W}_{F}$ for all finite subgroups $F\leqslant  \Gamma$.
\end{proposition}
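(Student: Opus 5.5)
For the first claim, assume $G$ is finite. Then $G$ itself belongs to $\Fin(G)$, so taking $F=G$ in \cref{def-prop-model} gives $\Wfin\subseteq\Cof(kG)^\perp$. For the reverse inclusion I will show that $\Cof(kG)^\perp$ is preserved by restriction to every subgroup $F\leqslant G$. Take $x\in\Cof(kG)^\perp$ and $c\in\Cof(kF)$. By the Eckmann--Shapiro lemma,
\[
\Ext^1_{kF}(c,\res^G_F x)\cong \Ext^1_{kG}(\ind^G_F c, x).
\]
This isomorphism holds because $\ind^G_F=kG\otimes_{kF}-$ is exact and preserves projectives, $kG$ being free over $kF$. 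Now \cref{prop:cof_ind_kes}(iii) says $\ind^G_F c$ is $kG$--cofibrant, so the right-hand side vanishes. Hence $\res^G_F x\in\Cof(kF)^\perp$ for all $F$, that is, $x\in\Wfin$. So $\Wfin=\Cof(kG)^\perp$, and taking left orthogonals gives $\mathrm{PCof}(kG)={}^\perp(\Cof(kG)^\perp)$.

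Next I need ${}^\perp(\Cof(kG)^\perp)=\Cof(kG)$. This follows because $(\Cof(kG),\Cof(kG)^\perp)$ is a cotorsion pair: by \cref{prop-model-struc} it is one of the two cotorsion pairs of the Hovey triple $(\Cof(kG),\Cof(kG)^\perp,\Mod(kG))$, namely $(\C,\W\cap\F)$ with $\F=\Mod(kG)$, and \cref{thm:Hovey} states this pair is a (complete) cotorsion pair. Thus $\Cof(kG)={}^\perp(\Cof(kG)^\perp)$, which gives $\mathrm{PCof}(kG)=\Cof(kG)$.

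For the second claim, let $\Gamma$ be any group. If $x\in\W_\Gamma$ and $F\leqslant\Gamma$ is finite, then every subgroup $E\leqslant F$ is a finite subgroup of $\Gamma$, so $\res^F_E\res^\Gamma_F x=\res^\Gamma_E x\in\Cof(kE)^\perp$. Hence $\res^\Gamma_F x\in\W_F$. Conversely, if $\res^\Gamma_F x\in\W_F$ for every finite $F$, the first part gives $\W_F=\Cof(kF)^\perp$, so $\res^\Gamma_F x\in\Cof(kF)^\perp$ for all finite $F$, which says exactly that $x\in\W_\Gamma$. Alternatively, for this direction one can simply take $E=F$ in the definition of $\W_F$.

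The only step needing real care is the Shapiro isomorphism together with cofibrancy of induced modules. The isomorphism is standard, and the cofibrancy is supplied by \cref{prop:cof_ind_kes}(iii).
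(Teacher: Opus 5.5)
Your proof is correct and follows the paper's argument. You get $\Wfin\subseteq\Cof(kG)^\perp$ from $G\in\Fin(G)$, and the reverse inclusion from the Eckmann--Shapiro isomorphism together with \cref{prop:cof_ind_kes}(iii). You also spell out the ``in particular'' via the cotorsion pair of \cref{prop-model-struc}, and the transitivity-of-restriction argument for the second claim, both of which the paper leaves implicit.
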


\begin{proof}
The containment $\Wfin\subseteq\Cof(kG)^\perp$ is clear since $G$ is finite. For the converse, let $F\leqslant G$ be a subgroup, $y\in\Cof(kG)^\perp$ and $x\in\Cof(kF)$. Then    \[
\mathrm{Ext}^1_{kF}(x,\res^G_F y)\cong  \mathrm{Ext}^1_{kG}(\ind^G_Fx,y)=0;
\] 
where we used the Eckmann-Shapiro Lemma (\cite[Corollary~2.8.4]{Ben98}) and \cref{prop:cof_ind_kes} (iii). Hence $y\in\Wfin$ which concludes the proof.
\end{proof}

The relevance of  \cref{def-prop-model} lies in the following result.

\begin{theorem}\label{thm-proper-coto}
    The triple $(\mathrm{PCof}(kG),\mathrm{PCof}(kG)^\perp,\Mod(kG))$ is a hereditary Hovey triple which gives rise to a combinatorial stable abelian model structure on $\Mod(kG)$. 
\end{theorem}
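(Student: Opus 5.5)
We set $\S$ to be a set of representatives of the modules $\ind^G_F c$, where $F$ runs over $\Fin(G)$ (or over a set of representatives of conjugacy classes) and $c$ runs over a generating set of the small cotorsion pair $(\Cof(kF),\Cof(kF)^\perp)$ given by \cref{prop-model-struc}. The plan is to show $\S^\perp=\Wfin$, derive completeness and hereditariness of $(\mathrm{PCof}(kG),\Wfin)$, and then verify Hovey's conditions for $\C=\mathrm{PCof}(kG)$, $\W=\Wfin$ and $\F=\Mod(kG)$.

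\textbf{Smallness.} By the Eckmann--Shapiro lemma, $\Ext^1_{kG}(\ind^G_F c,y)\cong\Ext^1_{kF}(c,\res^G_F y)$. Hence $y\in\S^\perp$ if and only if $\res^G_F y\in\Cof(kF)^\perp$ for every finite $F$, that is, $y\in\Wfin$. So $(\mathrm{PCof}(kG),\Wfin)$ is a cotorsion pair generated by a set. Because $\Mod(kG)$ is Grothendieck with enough projectives, \cref{rem:complete} makes it complete, with $\mathrm{PCof}(kG)={}^{\oplus}\mathrm{Filt}(\S)$. Since $\ind$ preserves cofibrants by \cref{prop:cof_ind_kes}(iii) and $\Cof(kG)$ is closed under filtrations and summands, we also get $\mathrm{PCof}(kG)\subseteq\Cof(kG)$.

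\textbf{Heredity and $\Wfin$ thick.} For heredity it suffices to show that $\Wfin$ is closed under cokernels of monomorphisms. Restriction is exact, and $\Cof(kF)^\perp$ is closed under cokernels of monos because the Ehmmanouil--Ren triple is hereditary, so $\Wfin$ inherits this. For the same reason $\Wfin$ is closed under extensions and direct summands. For the remaining 2-out-of-3 case, suppose $x\to y\to z$ is exact with $y,z\in\Wfin$. Then $\Ext^1_{kF}(c,\res x)=0$ follows from the long exact sequence once we know $\Hom_{kF}(c,\res y)\to\Hom_{kF}(c,\res z)$ is surjective. This reduces to $\Ext^1_{kF}(c,-)$ vanishing on kernels as well. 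The cleaner way is to use that $\Cof(kF)^\perp$ is itself thick, being the class of trivial objects in the model structure of \cref{prop-model-struc} with $\F=\Mod$. Thickness of $\Wfin$ is then immediate from exactness of restriction.

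\textbf{Hovey triple.} The two cotorsion pairs required by \cref{thm:Hovey} are $(\C,\W\cap\F)=(\mathrm{PCof}(kG),\Wfin)$, done above, and $(\C\cap\W,\F)=(\mathrm{PCof}(kG)\cap\Wfin,\Mod(kG))$. The second requires $\mathrm{PCof}(kG)\cap\Wfin=\mathrm{Proj}(kG)$. This is the main obstacle. Take $x\in\mathrm{PCof}(kG)\cap\Wfin$ and a projective cover sequence $0\to K\to P\to x\to0$. Since $\mathrm{PCof}(kG)\subseteq\Cof(kG)$ and the Ehmmanouil--Ren triple is hereditary, $K\in\Cof(kG)$. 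To split the sequence we want $K\in\Wfin$, so that $\Ext^1(x,K)=0$. Projectives lie in $\Wfin$ because they restrict to projectives, and $\Wfin$ is thick, so $K\in\Wfin$ and the sequence splits. Hence $x$ is projective; conversely projectives lie in both classes. With $(\mathrm{Proj},\Mod)$ small and complete, the model structure is cofibrantly generated, hence combinatorial since $\Mod(kG)$ is locally presentable.

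\textbf{Stability.} Stability holds because cofibrant objects with $\C\cap\W=$ projectives $=$ injectives of the Frobenius category $\C\cap\F=\mathrm{PCof}(kG)$ (\cref{prop:her_cat}) make the suspension an equivalence on the stable category.
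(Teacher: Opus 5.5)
Your proposal is correct and follows essentially the same route as the paper: generate the cotorsion pair by the induced modules $\ind_F^G c$ via Eckmann--Shapiro, get thickness of $\Wfin$ from thickness of $\Cof(kF)^\perp$ and exactness of restriction, and show $\mathrm{PCof}(kG)\cap\Wfin=\mathrm{Proj}(kG)$ by splitting a projective presentation $0\to K\to P\to x\to 0$ once $K\in\Wfin$. The only blemishes are cosmetic: a ``projective cover'' need not exist (any epimorphism from a projective suffices), the remark $K\in\Cof(kG)$ is unnecessary, and you should say explicitly that projective $kF$--modules lie in $\Cof(kF)^\perp$ (since $\Cof(kF)\cap\Cof(kF)^\perp=\mathrm{Proj}(kF)$).
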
   

\begin{remark}
It will follow from \cref{thm-proper-coto} that $\mathrm{PCof}(kG)^\perp = \Wfin$, since it will be proved that $(\mathrm{PCof}(kG), \Wfin)$ is a cotorsion pair. However, after the proof of \cref{thm-proper-coto} we shall use these two notations interchangeably whenever no ambiguity can arise.
\end{remark}

 Let us spell out some direct consequences of the previous theorem, which follow from the recollections of \cref{prop:her_cat}.

\begin{corollary}
     In the context of the previous theorem, the category of fibrant-cofibrant objects $\mathrm{PCof}(kG)$ is a Frobenius exact category with the canonical exact structure induced from $\Mod(kG)$, and the homotopy category of  $\Mod(kG)$ is a triangulated category equivalent to  the stable category of this Frobenius category. 
\end{corollary}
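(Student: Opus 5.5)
The plan is to apply \cref{prop:her_cat} directly to the Hovey triple produced by \cref{thm-proper-coto}. That theorem gives a hereditary Hovey triple $(\C,\W,\F)=(\mathrm{PCof}(kG),\mathrm{PCof}(kG)^\perp,\Mod(kG))$ on $\Mod(kG)$. Here $\Mod(kG)$ is a complete and cocomplete abelian category, so it meets the hypotheses of \cref{prop:her_cat}. The only bookkeeping is to identify each piece of that proposition in our situation.

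First I would compute the relevant intersections. Since $\F=\Mod(kG)$, we get $\C\cap\F=\mathrm{PCof}(kG)$, so the fibrant-cofibrant objects are exactly the proper cofibrant modules. \cref{prop:her_cat} then says that $\mathrm{PCof}(kG)$ is a Frobenius exact category, with the exact structure inherited from $\Mod(kG)$: the conflations are the short exact sequences of $kG$--modules whose three terms lie in $\mathrm{PCof}(kG)$. It also identifies the projective-injective objects as
\[
\C\cap\W\cap\F=\mathrm{PCof}(kG)\cap\mathrm{PCof}(kG)^\perp .
\]
I would note in passing that this class is the class of projective $kG$--modules. The justification is as follows. $(\C\cap\W,\F)=(\C\cap\W,\Mod(kG))$ is a cotorsion pair, which forces $\C\cap\W={}^\perp\Mod(kG)$, and that class consists exactly of the projectives. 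This identification is not strictly required by the statement, but it makes the Frobenius structure explicit.

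Second, the last clause of \cref{prop:her_cat} gives a triangle equivalence between the stable category $\mathsf{St}\,\mathrm{PCof}(kG)$, triangulated by Happel's theorem, and the homotopy category of the abelian model structure associated with the triple. This is precisely the claimed equivalence for the homotopy category of $\Mod(kG)$ with the proper model structure.

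I expect no real obstacle, since all the content lies in \cref{thm-proper-coto}, in particular its heredity and completeness. The one point to check is that the hereditary hypothesis in \cref{prop:her_cat} is the same notion the theorem provides: both cotorsion pairs are hereditary, which is equivalently the closure of $\mathrm{PCof}(kG)$ under kernels of epimorphisms. This is exactly what ``hereditary Hovey triple'' means in the statement of \cref{thm-proper-coto}.
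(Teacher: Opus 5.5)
Your proposal is correct and follows the paper's own argument: the corollary is stated there as a direct consequence of \cref{prop:her_cat} applied to the hereditary Hovey triple of \cref{thm-proper-coto}. Your identification of the projective-injective objects $\mathrm{PCof}(kG)\cap\mathrm{PCof}(kG)^\perp$ with $\mathrm{Proj}(kG)$ is also right; the paper checks exactly this equality in the proof of \cref{thm-proper-coto}.
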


\begin{definition} 
\label{def:proper}
    We write $\mathrm{StMod}^\mathrm{prop}_{kG}$ to denote the homotopy category of $\Mod(kG)$ (equipped with the above model structure) and refer to it as the \textit{proper stable module category of $kG$}.  
\end{definition}

\begin{remark}
\label{rem:ros}
The proper stable module category is well-generated triangulated category in the sense of Neeman; see \cite[Proposition 6.10]{Ros05}.    
\end{remark}

We need some preparations towards proving \cref{thm-proper-coto}. 

\begin{notation} 
By  \cref{prop-model-struc}, there exists a set $\mathcal{I}_G$ of objects of $\Mod(kG)$ that  generates the cotorsion pair $(\Cof(kG),\Cof(kG)^\perp)$, that is, $\Cof(kG)^\perp = \mathcal{I}_G^\perp$.
\end{notation}

\begin{proposition}\label{prop-equality-wfin}
    There is an equality 
    \[
    \Wfin=\{y\in \Mod(kG)\mid \forall F\in \Fin(G), \, \forall x\in \mathcal{I}_F : \mathrm{Ext}^1_{kG}(\ind_F^Gx,y)=0\}. 
    \]
\end{proposition}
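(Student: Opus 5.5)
The equality follows by unwinding \cref{def-prop-model} one finite subgroup at a time and then applying the Eckmann--Shapiro Lemma. Fix $F\in\Fin(G)$ and $y\in\Mod(kG)$. By the Notation preceding the statement, applied to the finite group $F$, we have $\Cof(kF)^\perp=\mathcal{I}_F^\perp$. Hence
\[
\res^G_F y\in\Cof(kF)^\perp \iff \mathrm{Ext}^1_{kF}(x,\res^G_F y)=0 \text{ for all } x\in\mathcal{I}_F .
\]
By definition, $y\in\Wfin$ exactly when the left-hand condition holds for every $F\in\Fin(G)$.

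The second step converts each $\mathrm{Ext}$ group over $kF$ into one over $kG$ via the Eckmann--Shapiro isomorphism
\[
\mathrm{Ext}^1_{kF}(x,\res^G_F y)\cong \mathrm{Ext}^1_{kG}(\ind^G_F x,y),
\]
as already used in the proof of \cref{rem:finite_W}. This isomorphism holds because $\ind^G_F=kG\otimes_{kF}-$ is exact, since $kG$ is free as a right $kF$--module, and it is left adjoint to the exact functor $\res^G_F$. It therefore sends projective resolutions over $kF$ to projective resolutions over $kG$, and the adjunction then identifies the corresponding $\mathrm{Ext}$ groups.

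Substituting this isomorphism into the first equivalence and quantifying over all $F\in\Fin(G)$ yields exactly the displayed set. Both inclusions follow at once, since each step is an equivalence. I do not expect a genuine obstacle here. The only point needing care is to confirm that the generating set for $F$ really satisfies $\Cof(kF)^\perp=\mathcal{I}_F^\perp$, which is supplied by \cref{prop-model-struc} applied to $F$, and to invoke Eckmann--Shapiro in degree one.
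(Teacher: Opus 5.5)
Your proposal is correct and follows the paper's proof: the paper gives exactly this chain of equivalences, replacing $\Cof(kF)^\perp$ by $\mathcal{I}_F^\perp$ and then applying the Eckmann--Shapiro isomorphism $\mathrm{Ext}^1_{kF}(x,\res^G_F y)\cong\mathrm{Ext}^1_{kG}(\ind^G_F x,y)$. Your justification of that isomorphism (induction is exact and preserves projectives as the left adjoint of the exact functor $\res^G_F$) is correct; the paper simply cites it.
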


\begin{proof}
    The equality is a direct consequence of the Eckmann-Shapiro Lemma (see for instance \cite[Corollary 2.8.4]{Ben98}): 
    \begin{align*}
        y\in \Wfin & \iff \forall F\in \Fin(G), \, \res^G_Fy\in \Cof(kF)^\perp \\
        & \iff \forall F\in \Fin(G), \forall x\in \Cof(kF) : \mathrm{Ext}^1_{kF}(x,\res^G_Fy)=0 \\ 
        & \iff  \forall F\in \Fin(G), \forall x\in \mathcal{I}_F : \mathrm{Ext}^1_{kF}(x,\res^G_Fy)=0\\
        & \iff  \forall F\in \Fin(G), \forall x\in \mathcal{I}_F : \mathrm{Ext}^1_{kG}(\ind_F^G x,y)=0;
    \end{align*}
hence the result follows. 
\end{proof}

\begin{corollary}\label{coro-wfin-complete}
    The pair $({}^\perp\Wfin,\Wfin)$ is a small cotorsion pair of $\Mod(kG)$. In particular, it is complete.  
\end{corollary}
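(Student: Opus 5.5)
The plan is to exhibit an explicit generating set for the cotorsion pair and then invoke \cref{rem:complete}. Set
\[
\mathcal{S}_G \coloneqq \{\ind_F^G x \mid F\in \Fin(G),\ x\in \mathcal{I}_F\}.
\]
This is a set, not merely a class, for two reasons: the finite subgroups of $G$ form a set, and each $\mathcal{I}_F$ is a set by the notation fixed after \cref{prop-model-struc}.

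\cref{prop-equality-wfin} then gives $\mathcal{S}_G^\perp=\Wfin$. It remains to see that $({}^\perp\Wfin,\Wfin)$ is a cotorsion pair, i.e.\ that $({}^\perp\Wfin)^\perp=\Wfin$. This is the standard observation that for any class $\mathcal{S}$ the pair $({}^\perp(\mathcal{S}^\perp),\mathcal{S}^\perp)$ is a cotorsion pair. We argue by two inclusions.
\begin{itemize}
\item $\mathcal{S}_G\subseteq{}^\perp(\mathcal{S}_G^\perp)={}^\perp\Wfin$, so applying $(-)^\perp$ yields $({}^\perp\Wfin)^\perp\subseteq\mathcal{S}_G^\perp=\Wfin$.
\item Conversely, $\Wfin\subseteq({}^\perp\Wfin)^\perp$ holds tautologically.
\end{itemize}
Hence the pair is a cotorsion pair. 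It is generated by the set $\mathcal{S}_G$, so it is small.

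For completeness, $\Mod(kG)$ is a Grothendieck category with enough projectives. By \cref{rem:complete} (Hovey's Corollary~6.6, going back to Eklof--Trlifaj), any cotorsion pair in such a category generated by a set is complete. Moreover, ${}^\perp\Wfin$ then consists of direct summands of $\mathcal{S}_G$-filtered modules.

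There is no real obstacle in this argument. The only point requiring care is the set-theoretic one, namely that $\mathcal{S}_G$ is a genuine set. The substantive work has already been done in \cref{prop-equality-wfin}, which rewrites the restriction condition as an $\Ext$-orthogonality via Eckmann--Shapiro.
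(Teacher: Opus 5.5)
Your proposal is correct and follows the paper's proof: both use \cref{prop-equality-wfin} to identify $({}^\perp\Wfin,\Wfin)$ as the cotorsion pair generated by the set $\mathcal{S}_G=\{\ind_F^G x\mid x\in\mathcal{I}_F,\ F\in\Fin(G)\}$, and then deduce completeness from \cref{rem:complete}. The only difference is that you spell out routine checks the paper leaves implicit, namely that $\mathcal{S}_G$ is a set and that $({}^\perp(\mathcal{S}^\perp),\mathcal{S}^\perp)$ is always a cotorsion pair.
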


\begin{proof}
    By \cref{prop-equality-wfin}, we obtain that $({}^\perp\Wfin,\Wfin)$ is the cotorsion pair {generated by} the set 
    \[
    \mathcal{S}_G\coloneqq \{\ind_F^G x\mid x\in \mathcal{I}_F,\, F\in \Fin(G)\}. 
    \]
    Thus the result follows by  \cref{rem:complete}.
\end{proof}

\begin{proposition}\label{prop-Wfin-hereditary}
    The class $\Wfin$ satisfies the 2-out-of-3 property. In particular, $({}^\perp\Wfin,\Wfin)$ is a hereditary cotorsion pair. 
\end{proposition}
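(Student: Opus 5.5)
Since restriction $\res^G_F$ is exact, a short exact sequence $0\to y'\to y\to y''\to 0$ of $kG$--modules restricts to a short exact sequence of $kF$--modules for every $F\in\Fin(G)$. Membership in $\Wfin$ is tested finite subgroup by finite subgroup, so it suffices to show that for a \emph{finite} group $F$ the class $\Cof(kF)^\perp$ satisfies the 2-out-of-3 property for short exact sequences. The plan is to get this from \cref{prop-model-struc}. There $(\Cof(kF),\Cof(kF)^\perp,\Mod(kF))$ is a Hovey triple, so by \cref{thm:Hovey} its class of trivial objects $\mathcal{W}=\Cof(kF)^\perp$ is thick. In particular it satisfies 2-out-of-3. (One checks that $\Cof(kF)^\perp$ is the trivial class because $\Mod(kF)\cap\mathcal{W}=\mathcal{W}$ is paired with $\Cof(kF)$.)

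For a direct argument, as a check, I would use the long exact $\Ext$ sequence. Let $c\in\Cof(kF)$. Applying $\Hom_{kF}(c,-)$ gives
\[
\Ext^1(c,y')\to\Ext^1(c,y)\to\Ext^1(c,y'')\to\Ext^2(c,y')\to\Ext^2(c,y)\to \cdots
\]
Heredity of $(\Cof(kF),\Cof(kF)^\perp)$ from \cref{prop-model-struc} gives $\Ext^i(c,w)=0$ for all $i\geq1$ and all $w$ in the class. This immediately yields closure under extensions and under cokernels of monomorphisms. Closure under kernels of epimorphisms (when $y,y''$ lie in the class, show $y'$ does) is the harder direction. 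For it, I would use that $\Cof(kF)$ is closed under kernels of epimorphisms (\cref{rem:properties of cofs}) and contains projectives. Pick $0\to c'\to p\to c\to 0$ with $p$ projective; then $c'$ is cofibrant and dimension shifting gives $\Ext^1(c,y')\cong\Ext^2(c,y')$ modulo the term $\Ext^1(c,y'')$... Since the exact sequence shows $\Ext^1(c,y')$ is a quotient of $\Hom(c,y'')$ only, this naive route fails. I therefore rely on thickness from \cref{thm:Hovey}, which is exactly the needed statement, instead of the direct argument.

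Having 2-out-of-3 for each $\Cof(kF)^\perp$, the property for $\Wfin$ follows by exactness of restriction. For heredity of the cotorsion pair $({}^\perp\Wfin,\Wfin)$, I would use the criterion recalled after \cref{def:hovey_triple}: a complete cotorsion pair is hereditary iff its right class is closed under cokernels of monomorphisms. The pair is complete by \cref{coro-wfin-complete}, and this closure is one instance of 2-out-of-3. The main point to get right is identifying $\Cof(kF)^\perp$ as the thick trivial class of the Hovey triple in \cref{prop-model-struc}; everything else is formal.
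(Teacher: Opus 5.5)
Your proof is correct and is essentially the paper's argument: exactness of $\res^G_F$ reduces 2-out-of-3 for $\Wfin$ to thickness of $\Cof(kF)^\perp$, which is the trivial class of the Hovey triple in \cref{prop-model-struc}. Heredity then follows from completeness (\cref{coro-wfin-complete}) together with closure of $\Wfin$ under cokernels of monomorphisms, exactly as in the paper.

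The direct $\Ext$ argument you abandoned is not needed. It can be repaired by embedding $c$ into a projective with cofibrant cokernel, instead of using a syzygy; this is possible because $\Cof(kF)\cap\Cof(kF)^\perp=\mathrm{Proj}(kF)$. You were right not to rely on it as written.
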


\begin{proof}
Let $0\to x\to y\to z \to 0$ be a short exact sequence  with two out of the three terms  in $ \Wfin$. We claim that the third term is in $\Wfin$ too. Indeed, for any $F\in \Fin(G)$, we obtain a short exact sequence 
\[
0 \to \res^G_F x\to \res^G_F y\to \res^G_F z\to 0
\]
since the restriction functor is exact. Now, the two terms in the former short exact sequence satisfy that their image under the restriction functor lies in $\Cof(kF)^\perp$,  and hence so does the image of the third term since $\Cof(kF)^\perp$ is thick; see \cref{prop-model-struc}. The last claim follows since a complete cotorsion pair is coresolving if and only if it is resolving; see for instance \cite[Corollary 1.1.12]{Bec14}.
\end{proof}

\begin{proof}[Proof of \cref{thm-proper-coto}]
   {In view of \cref{thm:Hovey} and  \cref{prop:her_cat} we only need to} verify that $({}^\perp\Wfin,\Wfin)$ and $({}^\perp\Wfin\cap\Wfin,\Mod(kG))$ are small and hereditary cotorsion pairs in $\Mod(kG)$, with the class $\Wfin$ being thick. Notice that the first cotorsion pair is settled by the results proved above. 
   
   Let us now prove that the class $\Wfin$ is thick. Indeed, it is closed under direct summands  since the restriction functor is additive and $\Cof(kF)^\perp$ is closed under direct summands for any group $F$. Moreover, it satisfies the 2-out-of-3 property by \cref{prop-Wfin-hereditary}. It remains to verify that ${}^\perp\Wfin\cap\Wfin$ agrees with $\mathrm{Proj}(kG)$, the class of projective $kG$--modules. Any projective $kG$--module module belongs in ${}^\perp\Wfin$. In addition, since restriction preserves projective modules, and 
   \[
   \Cof(kF)\cap \Cof(kF)^\perp=\mathrm{Proj}(kF),
   \]
    it follows that   $\mathrm{Proj}(kG)\subseteq {}^\perp\Wfin\cap\Wfin$. 
    For the reverse inclusion, let $x$ be a $kG$--module in ${}^\perp\Wfin\cap\Wfin$. Then there is a projective $kG$--module $p$ and a short exact sequence 
    \[
    0\to \ y\to p \to x\to 0.
    \]
    By the 2-out-of-3 property we deduce that $y$ is in $\Wfin$, and hence the short exact sequence splits. It follows that $x$ is projective.
\end{proof}

The following result gives the relation between the class $\mathrm{PCof}(kG)$ of proper cofibrants and the class $\mathrm{Cof}(kG)$ of Benson cofibrants.

\begin{proposition}\label{rem-Wfin-contains-cof}
The inclusion $\Cof(kG)^\perp\subseteq\Wfin$ holds true; equivalently, we have that $\mathrm{PCof}(kG)\subseteq \Cof(kG)$. In particular, the modules in $\mathrm{PCof}(kG)$ are $k$--projective. 
\end{proposition}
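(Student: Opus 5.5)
The plan is to prove the inclusion $\Cof(kG)^\perp\subseteq\Wfin$ directly. The equivalent formulation $\mathrm{PCof}(kG)\subseteq\Cof(kG)$ will then follow formally by taking left orthogonals. The final claim comes from \cref{prop:cof_proj}.

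Fix $y\in\Cof(kG)^\perp$ and a finite subgroup $F\in\Fin(G)$. We must show $\res^G_F y\in\Cof(kF)^\perp$. To do so, take an arbitrary $x\in\Cof(kF)$. By the Eckmann--Shapiro lemma \cite[Corollary~2.8.4]{Ben98}, exactly as in the proof of \cref{rem:finite_W}, we have
\[
\Ext^1_{kF}(x,\res^G_F y)\cong \Ext^1_{kG}(\ind^G_F x, y).
\]
By \cref{prop:cof_ind_kes}(iii), the induced module $\ind^G_F x$ is a cofibrant $kG$--module, since $F$ is finite and $x$ is $kF$--cofibrant. Hence the right-hand side vanishes because $y\in\Cof(kG)^\perp$. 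Since $F$ and $x$ were arbitrary, this shows $y\in\Wfin$.

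For the equivalent formulation, recall that $(\Cof(kG),\Cof(kG)^\perp)$ is a cotorsion pair by \cref{prop-model-struc}, so $\Cof(kG)={}^\perp(\Cof(kG)^\perp)$. Taking left $\Ext^1$-orthogonals reverses inclusions, so we obtain
\[
\mathrm{PCof}(kG)={}^\perp\Wfin\subseteq{}^\perp(\Cof(kG)^\perp)=\Cof(kG).
\]
Conversely, if $\mathrm{PCof}(kG)\subseteq\Cof(kG)$, then taking right orthogonals gives $\Cof(kG)^\perp\subseteq\mathrm{PCof}(kG)^\perp=\Wfin$, using \cref{thm-proper-coto}. So the two statements are indeed equivalent.

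Finally, \cref{prop:cof_proj} says that every Benson cofibrant module is $k$--projective, hence so is every proper cofibrant module.

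There is no real obstacle here. The only substantive input is \cref{prop:cof_ind_kes}(iii), that induction from a finite subgroup preserves cofibrancy, and this is already established.
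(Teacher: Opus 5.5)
Your proof is correct and follows the paper's argument: Eckmann--Shapiro combined with \cref{prop:cof_ind_kes}(iii) gives $\res^G_F y\in\Cof(kF)^\perp$, then passing to orthogonals gives $\mathrm{PCof}(kG)\subseteq\Cof(kG)$, and \cref{prop:cof_proj} gives $k$--projectivity. The only cosmetic difference is that the paper tests $\res^G_F y$ against the generating set $\mathcal{I}_F$ rather than against all of $\Cof(kF)$, which changes nothing.
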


\begin{proof}
Fix  $y\in \Cof(kG)^\perp$. By \cref{prop:cof_ind_kes},  for any $F\in \Fin(G)$, we have that  $\ind_F^G\mathcal{I}_F\subseteq \Cof(kG)$. In particular, for all $x\in \mathcal{I}_F$, we obtain that 
\[
0=\mathrm{Ext}^1_{kG}(\ind^G_Fx,y)\simeq \mathrm{Ext}^1_{kF}(x,\res^G_F y)
\] 
and hence $\res_F^Gy\in \Cof(kF)^\perp$. Since $F$ was an arbitrary finite subgroup of $G$, we deduce that $y\in \Wfin$, which proves that $\Cof(kG)^\perp\subseteq\Wfin$. This is equivalent to $\mathrm{PCof}(kG)\subseteq \Cof(kG)$ since both $\mathrm{PCof}(kG)$ and $\mathrm{Cof(kG)}$ form the left hand sides of cotorsion pairs. The last claim follows from \cref{prop:cof_proj}.
\end{proof}

We now prove the existence of certain Quillen functors between proper stable module categories, which will be useful in the sequel.



\begin{lemma}\label{lemma-Quillen-adj}
    Let $i\colon H\to G$ be a group monomorphism. Then in the adjoint triple  
    \begin{center}
          \begin{tikzcd}[column sep=large, row sep=large]
     \Mod(kH) \arrow[r,"\ind_i", yshift=3mm] \arrow[r, yshift=-3mm, "\mathrm{coind}_i"'] & \Mod(kG) \arrow[l, "\res_i" description] 
    \end{tikzcd}
    \end{center}
the pairs $(\mathrm{ind_i,\res_i})$ and  $(\res_i,\mathrm{coind}_i)$ are Quillen adjunctions with respect to the model structures induced by \cref{thm-proper-coto}. 
\end{lemma}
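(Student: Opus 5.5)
Since $\res_i$ is the middle functor, I only need to show that $\res_i$ is both left and right Quillen for the model structures of \cref{thm-proper-coto}. These structures are abelian, with every object fibrant. So a left adjoint is left Quillen as soon as it preserves cofibrations and trivial cofibrations, that is, monomorphisms with cokernel in $\mathrm{PCof}$, respectively in $\mathrm{PCof}\cap\W=\mathrm{Proj}$. Dually, a right adjoint is right Quillen if it preserves epimorphisms with kernel in $\W$ (the fibrations) and epimorphisms with kernel in $\mathrm{Proj}$ (the trivial fibrations). All three functors $\ind_i$, $\res_i$, $\mathrm{coind}_i$ are exact, because $kG$ is free as a $kH$--module on both sides. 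Hence everything reduces to statements about classes of objects.

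For $(\ind_i,\res_i)$ I will check the right adjoint. First, $\res_i$ sends $\W_G$ into $\W_H$: by the last assertion of \cref{rem:finite_W}, $x\in\W_G$ means that $\res^G_F x\in\W_F$ for all finite $F\leqslant G$. Every finite subgroup $F'\leqslant H$ maps isomorphically onto a finite subgroup $i(F')\leqslant G$, and $\res_{F'}\res_i x$ is the restriction of $x$ to $i(F')$ transported along this isomorphism, so $\res_i x\in\W_H$. Second, $\res_i$ preserves projectives, since $kG$ is a free $kH$--module. This settles fibrations and trivial fibrations, so $\res_i$ is right Quillen.

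For $(\res_i,\mathrm{coind}_i)$ I will check the left adjoint. It preserves projectives, as just noted, so it remains to show that $\res_i(\mathrm{PCof}(kG))\subseteq\mathrm{PCof}(kH)$. This is the main step. I will use adjunction: for $c\in\mathrm{PCof}(kG)$ and $w\in\W_H$, exactness of $\res_i$ and $\mathrm{coind}_i$, together with preservation of projectives by $\res_i$, gives $\Ext^1_{kH}(\res_i c,w)\cong\Ext^1_{kG}(c,\mathrm{coind}_i w)$. It therefore suffices to show $\mathrm{coind}_i(\W_H)\subseteq\W_G$.

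To prove this inclusion, fix a finite $F\leqslant G$. I expect the Mackey formula for coinduction to give $\res^G_F\mathrm{coind}_i w\cong\prod_{g}\mathrm{coind}^F_{F\cap{}^gH}\,{}^g(\res w)$, a product over double cosets. Each factor is a coinduction from a finite subgroup of a module in $\W_{F\cap{}^gH}$, by the first step and conjugation invariance. Coinduction from $F\cap{}^gH$ to $F$ preserves the right orthogonal of $\Cof$: the Eckmann--Shapiro isomorphism $\Ext^1_{kF}(x,\mathrm{coind}\,y)\cong\Ext^1(\res x,y)$ applies, and restriction preserves $\Cof$ by \cref{prop:cof_ind_kes}(i). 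Finally, $\Cof(kF)^\perp$ is closed under products, being a right $\Ext^1$--orthogonal, and by \cref{rem:finite_W} it coincides with $\W_F$. So $\res^G_F\mathrm{coind}_i w\in\W_F$, hence $\mathrm{coind}_i w\in\W_G$.

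The obstacle I expect is justifying this product form of the Mackey formula for coinduction when the index $[G:H]$ is infinite. If it becomes delicate, I will fall back on showing that $\res_i$ preserves $\mathrm{PCof}$ directly. For that I would describe $\mathrm{PCof}(kG)$ as the direct summands of $\mathcal S_G$-filtered modules (\cref{coro-wfin-complete}, \cref{rem:complete}). Then I would apply the ordinary Mackey formula to $\res_i\ind^G_F x$ for $x\in\mathcal I_F$, obtaining sums of modules $\ind_{F'}^H$ of restrictions of cofibrants. These lie in $\mathrm{PCof}(kH)$, which is closed under sums, filtrations and summands, again by \cref{prop:cof_ind_kes}(i) and Eckmann--Shapiro.
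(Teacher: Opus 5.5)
Your argument is correct. For $(\ind_i,\res_i)$ you argue exactly as the paper does. For $(\res_i,\mathrm{coind}_i)$ your main route differs from the paper's.

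The paper stays on the cofibrant side. It applies the ordinary Mackey formula to $\res^G_H\ind^G_F x$ for $x\in\mathcal I_F$, and uses that $\ind$ preserves $\mathrm{PCof}$, which follows formally from the first half. It then passes to all of $\mathrm{PCof}(kG)={}^\oplus\mathrm{Filt}(\mathcal S_G)$ by Eklof's lemma. This is exactly your fallback.

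You instead work on the trivial side. You prove $\mathrm{coind}_i(\W_H)\subseteq\W_G$, which verifies directly that $\mathrm{coind}_i$ is right Quillen, and you transfer this to $\res_i$ via Eckmann--Shapiro.

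\begin{itemize}
\item Your version uses only three inputs: the definition of $\W$ by orthogonality over finite subgroups, Eckmann--Shapiro between finite groups, and closure of $\Cof(kF)^\perp$ under products. It needs neither the generating set nor the filtration description of $\mathrm{PCof}$. It also makes the two halves symmetric, since both $\res_i$ and $\mathrm{coind}_i$ preserve trivial objects.
\item The paper's version needs only the direct-sum Mackey formula. It follows the standard pattern of checking generators and then closing under filtrations.
\end{itemize}

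The obstacle you anticipated does not arise. Decompose $kG=\bigoplus_{HgF}k[HgF]$ as $(kH,kF)$-bimodules and apply $\Hom_{kH}(-,w)$; this turns the sum into a product. For arbitrary index this gives
\[
\res^G_F\,\mathrm{coind}^G_H w\cong\prod_{HgF\in H\backslash G/F}\mathrm{coind}^F_{F\cap{}^{g^{-1}}\!H}\,{}^{g^{-1}}\!\bigl(\res^H_{H\cap{}^g\!F}w\bigr),
\]
where each $H\cap{}^g\!F$ is finite.

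There is one slip in your setup. In these Hovey triples every object is fibrant, so the fibrations are all epimorphisms. The trivial fibrations are the epimorphisms with kernel in $\W\cap\F=\W$; epimorphisms with projective kernel are not the trivial fibrations. The slip is harmless here. You check that $\W$ is preserved (and, superfluously, that projectives are), and exactness takes care of all epimorphisms.
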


{
\begin{proof}
Recall the existence of the adjuctions at the level of abelian categories (\cref{rec-adjuntions-mod}) and notice that all the functors involved are exact. Indeed, restriction is clearly exact and induction and coinduction are exact since $\res_i kG$ is projective as a $kH$--module.  For simplicity, we assume that $H\leqslant G$, the general case follows in a similar fashion. 

In order to prove that $(\mathrm{ind}_H^G,\res_H^G)$ is a Quillen adjunction, it suffices to prove that $\res_H^G$ is right Quillen, i.e. that it maps (trivial) fibrations to (trivial) fibrations. Since $\res_H^G$ is exact, this amounts to checking that it maps (trivially) fibrant objects to (trivially) fibrant objects. Since all objects are fibrant in the involved model structures we just need to prove that $\res_H^G$ maps $\W_{G}$ to $\W_{H}$. Given $x\in\Wfin$ we have that $\res_H^Gx\in\W_{H}$ if and only if for each finite subgroup $H'\subseteq H$ the $kH'$--module $\res^{H}_{H'} \res^G_{H}(x)=\res^G_{H'}(x)$ belongs in $\mathrm{Cof}(kH')^\perp$, which clearly holds since $x\in\Wfin$. Hence $\res^G_H$ is right Quillen. It follows formally that $\mathrm{ind}^G_H$ is left Quillen, which by exactness again amounts to $\mathrm{ind}^G_H$ preserving proper cofibrant modules.

In order to prove that $(\mathrm{res}_H^G,\mathrm{coind}_H^G)$ is a Quillen adjunction, for similar reasons as above, it suffices to prove that $\mathrm{res}_H^G$ maps $\mathrm{PCof}(kG)$ to $\mathrm{PCof}(kH)$ and preserves projectives (the latter property is clear since $kG$ is a free $kH$--module).


    

   We first claim that $\res^G_H$ maps into $\mathrm{PCof}(kH)$ all objects of the form $\mathrm{ind}_F^G(x)$ where $F$ is a finite subgroup of $G$ and $x\in\mathrm{Cof}(kF)$, that is, $x$ is a $k$--projective $kF$--module. Indeed,
    by the double coset formula, we have that 
    \[
    \res^G_H\ind_F^Gx\simeq \bigoplus_{g\in H\backslash G/F} \ind_{{}^g\!F\cap H}^H \res^{{}^g\!F}_{{}^g\!F\cap H}({}^g\!x).
    \]
    Clearly, each $\res^{{}^g\!F}_{{}^g\!F\cap H}({}^g\!x)$ is $k$-projective and hence it lies in $\Cof(k{}^g\!F\cap H)$. By 
    the above observation that induction functors preserve proper cofibrant modules we get that 
    \[
    \ind_{{}^g\!F\cap H}^H \res^{{}^g\!F}_{{}^g\!F\cap H}({}^g\!x)\quad \mbox{lies in}\,\,\,\,\mathrm{PCof}(kH)
    \]
     It follows that $\res^G_H\ind_F^Gx $ lies in $\mathrm{PCof}(kH)$ as well. In particular, we have that $\res^G_H(\mathcal{S}_G)\subseteq \mathrm{PCof}(kH)$, where by $\mathcal{S}_G$ we denote the generating set of the complete cotorsion pair $(\mathrm{PCof}(kG),\mathrm{PCof}(kG)^{\bot})$, as in \cref{coro-wfin-complete}.

    Finally, using that $\res^G_H$ is exact and colimit-preserving, we get that for any $x \in \mathrm{PCof}(kG)= {}^\oplus\mathrm{Filt}(\S_G)$, the following holds:
    \[
    \res^G_Hx\in {}^\oplus \mathrm{Filt}(\res^G_H\mathcal S_G)\subseteq {}^\oplus \mathrm{Filt}(\mathrm{PCof}(kH))=\mathrm{PCof}(kH)
    \] 
where the last equality holds by Eklof's lemma--see for instance \cite[Lemma~1]{Eklof}--and the fact that $\mathrm{PCof}(kH)$ is closed under summands. It follows that $\res^G_H$ is a left Quillen functor, which  concludes the proof.
\end{proof}

\begin{remark}
   We note that, in general, it is not clear whether a Benson cofibrant module restricts to a Benson cofibrant module in full generality. However, as shown in the previous lemma, this does hold for proper cofibrant modules. This phenomenon is somewhat surprising, at least to the authors.
\end{remark}

\begin{remark}
    As a consequence of the previous lemma, we get that the restriction functor descends to an exact functor on homotopy categories that preserves both small coproducts and small products. We keep referring to the induced functor as restriction functor and keep the same notation. 
\end{remark} 

We conclude this section with a few observations.

\begin{corollary}\label{coro-detection}
 Let $G$ be a group and $k$ be a commutative ring. Then the family of coproduct-preserving exact (triangulated) functors 
 \[
 (\res^G_F \colon \mathrm{StMod}^{\mathrm{prop}}_{kG}\to \mathrm{StMod}^{\mathrm{prop}}_{kF})_{F\in \Fin(G)}
 \]
 is jointly-conservative, that is, $x\simeq 0$ in the proper stable {module} category of $G$ if and only if $\res^G_Fx\simeq 0$ in the proper stable {module} category of $F$, for all $F\in \Fin(G)$. 
\end{corollary}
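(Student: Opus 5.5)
The plan is to reduce the statement to the description of trivial objects in the proper model structure and then apply \cref{rem:finite_W}. The forward implication is immediate: by \cref{lemma-Quillen-adj}, $\res^G_F$ is both left and right Quillen, and it preserves all weak equivalences. Indeed, it is exact, and it sends $\Wfin$ into $\W_F$, which was checked in the proof of that lemma. So $\res^G_F$ descends to an exact, coproduct-preserving functor on homotopy categories, and $x\simeq 0$ implies $\res^G_Fx\simeq 0$. Coproducts are preserved because $\res^G_F$ is a left adjoint, hence left Quillen, so its derived functor commutes with homotopy coproducts.

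For the converse, I first note that in any abelian model structure an object $x$ is isomorphic to $0$ in the homotopy category exactly when $x$ is trivial, that is, lies in $\W$. The reason is that the map $0\to x$ is a weak equivalence if and only if $x\in\W$, which follows from the definition of $\mathrm{Weak}$ together with the thickness of $\W$. Since every module is fibrant and restriction preserves weak equivalences, I do not need any cofibrant replacement: $\res^G_F x$ computes the derived restriction directly. Thus the hypothesis says $\res^G_Fx\in\W_F$ for all $F\in\Fin(G)$.

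By \cref{rem:finite_W}, for finite $F$ we have $\W_F=\Cof(kF)^\perp$. Moreover $x\in\Wfin$ if and only if $\res^G_Fx\in\W_F$ for every finite $F$. Hence $x\in\Wfin$, that is, $x\simeq 0$ in $\mathrm{StMod}^{\mathrm{prop}}_{kG}$.

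I expect the only delicate point to be the identification of zero objects in the homotopy category with $\W$. This is standard for abelian model structures (Hovey), so I would cite \cref{thm:Hovey} for it rather than reprove it. Everything else is formal from the definition of $\Wfin$.
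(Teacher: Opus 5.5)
Your argument is correct and is essentially the paper's proof: the paper works in the Frobenius model $\mathsf{St}\,\mathrm{PCof}(kG)$, where $x\simeq 0$ means $x\in\mathrm{Proj}(kG)=\Wfin\cap{}^\perp\Wfin$, and then concludes via \cref{rem:finite_W}, just as you do. Identifying zero objects with $\Wfin$ directly (because every module is fibrant and restriction preserves weak equivalences) is only a cosmetic difference, and it even avoids relying on restriction preserving proper cofibrants.
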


\begin{proof}
By the description of the homotopy category of the proper model structure given in \cref{thm-proper-coto}, it suffices to prove that $x\simeq 0$ in $\mathsf{St}\mathrm{PCof}(kG)$ if and only if $\res^G_F(x)\simeq 0$ in  $\mathsf{St}\mathrm{PCof}(kF)$, for all $F\in \Fin(G)$. We have that $x\simeq 0$ in $\mathsf{St}\mathrm{PCof}(kG)$ if and only if $x$ belongs in $\mathrm{Proj}(kG)=\mathcal{W}_{G}\cap {}^\perp\mathcal{W}_{G}$, as a $kG$--module. By \cref{rem:finite_W}, this is equivalent to $\res^G_F(x)\in\mathcal{W}_{F}\cap {}^\perp\mathcal{W}_{F}=\mathrm{Proj}(kF)$ for all finite subgroups $F\leqslant  G$, which finishes the proof.
\end{proof}

\begin{corollary}
    \label{lemma-weak-equivalences}
    Let $f\colon x\to y$ be a morphism of $kG$--modules. Then $f$ is a trivial fibration if and only if $\res^G_Ff$ is a trivial fibration in $\Mod(kF)$ for every $F\in \Fin(G)$. 
\end{corollary}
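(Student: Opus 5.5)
In the model structure of \cref{thm-proper-coto}, the trivial fibrations are exactly the epimorphisms whose kernel lies in the trivially fibrant class $\Wfin\cap\Mod(kG)=\Wfin$. The same holds in $\Mod(kF)$ with kernels in $\W_F$. So the statement reduces to a question about epimorphisms and their kernels, and I would argue in two steps.

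First, I would check that surjectivity is detected by restriction. Restriction $\res^G_F$ is exact and does not change the underlying abelian group. Hence $f$ is an epimorphism if and only if $\res^G_Ff$ is an epimorphism for some (equivalently every) finite subgroup $F$. Such a subgroup always exists, since the trivial subgroup is finite. The same exactness gives $\res^G_F(\ker f)=\ker(\res^G_Ff)$.

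Second, I would show that the kernel condition is also detected by restriction. By the second part of \cref{rem:finite_W}, a module $z$ lies in $\Wfin$ if and only if $\res^G_F z\in\W_F$ for all $F\in\Fin(G)$. Applying this to $z=\ker f$ and combining with the first step gives the result: $f$ is a trivial fibration in $\Mod(kG)$ precisely when each $\res^G_Ff$ is an epimorphism with kernel in $\W_F$, that is, a trivial fibration in $\Mod(kF)$.

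There is no real obstacle here. The only points needing care are two identifications. One is that the trivially fibrant objects are exactly $\Wfin$, which follows from the remark after \cref{thm-proper-coto} that $\mathrm{PCof}(kG)^\perp=\Wfin$. The other is that, for a finite group $F$, the class $\W_F$ coincides with $\Cof(kF)^\perp$, so the two model structures on $\Mod(kF)$ agree; this is the first part of \cref{rem:finite_W}.
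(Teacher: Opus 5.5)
Your proof is correct and follows essentially the same route as the paper. Trivial fibrations are the epimorphisms with kernel in $\Wfin$. Restricting to the trivial subgroup detects surjectivity, exactness of restriction identifies $\res^G_F(\ker f)$ with $\ker(\res^G_F f)$, and the second part of \cref{rem:finite_W} detects membership of the kernel in $\Wfin$ through its restrictions to finite subgroups.
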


\begin{proof}
  Assume that $f$ is a trivial fibration in the proper model category of $G$, that is, $f$ is an epimorphism with kernel in $\Wfin$. By  \cref{rem:finite_W} and the fact that restriction functors are exact, we obtain that $\res^G_Ff$ is a trivial fibration in $\Mod(kF)$ for every $F\in \Fin(G)$. Conversely, if we assume the latter condition on $f$, by restricting along the trivial subgroup we deduce that $f$ is an epimorphism, with kernel say $K$. Then by assumption, $K\in\W_{\F}$ for all finite subgroups $F\leqslant  G$. Hence $K\in\Wfin$ and $f$ is a trivial fibration.
\end{proof}

\section{Monoidal model structures}

In this section, our goal is to prove that the proper model structure on $\Mod(kG)$ from \cref{thm-proper-coto} is compatible with the symmetric monoidal structure  on the category $\Mod(kG)$ from \cref{rec-adjuntions-mod}. Explicitly, we aim to prove the following result: 


\begin{theorem}
\label{prop-monoidal-proper}
The proper model structure on $\Mod(kG)$ makes $(\Mod(kG), \otimes,k)$ into a symmetric monoidal model category in the sense of \cite[Definition 4.2.6]{Hov99}.
\end{theorem}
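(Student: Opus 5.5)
We reduce the statement to the criterion of Hovey for abelian model structures (cf. \cite[Theorem~7.2]{Hov02}). For a hereditary Hovey triple $(\C,\W,\F)$ on a closed symmetric monoidal abelian category, the model structure is monoidal once four conditions hold:
\begin{itemize}
\item[(1)] every cofibrant object is flat, i.e.\ $c\otimes-$ is exact for $c\in\C$;
\item[(2)] $\C\otimes\C\subseteq\C$;
\item[(3)] $\C\otimes(\C\cap\W)\subseteq\C\cap\W$;
\item[(4)] the unit axiom, which is automatic if the unit is cofibrant or can be handled via a cofibrant replacement.
\end{itemize}
In our situation $\C=\mathrm{PCof}(kG)$, $\W=\Wfin$, and every object is fibrant.

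Condition (1) is immediate. By \cref{rem-Wfin-contains-cof} every proper cofibrant module is $k$--projective, and $\otimes=\otimes_k$, so it is exact in each variable.

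Condition (3) is also easy. We have $\mathrm{PCof}(kG)\cap\Wfin=\mathrm{Proj}(kG)$, as established in the proof of \cref{thm-proper-coto}. For $x$ proper cofibrant, hence $k$--projective, and $p$ projective, \cref{prop-tensor-proje-latti} shows that $x\otimes p$ is projective.

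The main work is condition (2): $\mathrm{PCof}(kG)$ is closed under $\otimes_k$. We use the description $\mathrm{PCof}(kG)={}^\oplus\mathrm{Filt}(\S_G)$ from \cref{rem:complete} and \cref{coro-wfin-complete}, where $\S_G$ consists of the modules $\ind_F^G x$ with $F$ finite and $x\in\mathcal I_F$. We may also replace $\mathcal I_F$ by all of $\Cof(kF)$, since $\ind_F^G$ preserves proper cofibrants by \cref{lemma-Quillen-adj}.

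Fix $y\in\mathrm{PCof}(kG)$. Since $y$ is $k$--flat, $y\otimes-$ is exact and commutes with colimits, so it preserves transfinite extensions and summands. By Eklof's lemma it therefore suffices to show that $y\otimes\ind_F^G x$ lies in $\mathrm{PCof}(kG)$. The projection formula gives
\[
y\otimes\ind_F^G x\cong\ind_F^G(\res^G_F y\otimes x).
\]
Here $\res^G_F y$ and $x$ are both $k$--projective, so their tensor product is $k$--projective. By \cref{prop:cof_proj} it lies in $\Cof(kF)=\mathrm{PCof}(kF)$ (using \cref{rem:finite_W}). Induction then lands in $\mathrm{PCof}(kG)$ by \cref{lemma-Quillen-adj}.

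With (2) and (3) in hand, the pushout-product axiom follows in the standard way for monomorphisms with cokernels in $\C$. Given cofibrations $i$ and $j$ with cokernels $c$ and $c'$, flatness from (1) shows that $i\,\square\, j$ is a monomorphism with cokernel $c\otimes c'$. That cokernel is cofibrant by (2), and it is trivial whenever one of $c,c'$ is trivially cofibrant, by (3).

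Condition (4) is the step most likely to be delicate, since $k$ is generally not proper cofibrant. Take a cofibrant replacement $q\colon Qk\to k$, a trivial fibration whose kernel $w$ lies in $\Wfin$. For cofibrant $x$ we must show $Qk\otimes x\to x$ is a weak equivalence. Because $x$ is $k$--projective, this map is an epimorphism with kernel $w\otimes x$, so it suffices to show $w\otimes x\in\Wfin$.

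To do this, restrict to a finite subgroup $F$, where $\res^G_F w\in\Cof(kF)^\perp$. Since $\res^G_F x\in\Cof(kF)$, it is enough to know that $\Cof(kF)^\perp\otimes\Cof(kF)\subseteq\Cof(kF)^\perp$ for finite $F$. This holds by adjunction: for $c\in\Cof(kF)$,
\[
\Ext^1_{kF}(c,w\otimes x)\cong\Ext^1_{kF}(c\otimes x^{*},w)
\]
when $x$ is finitely generated. For general $x$ we would argue through the finite-group case of \cite{BBIKP25}, where the model structure is known to be monoidal (\cref{prop-monoidal-finite}), and then use \cref{lemma-weak-equivalences} to pass back to $G$.
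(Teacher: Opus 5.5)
Your flatness condition, the closure $\mathrm{PCof}(kG)\otimes\mathrm{PCof}(kG)\subseteq\mathrm{PCof}(kG)$, condition (3) and the pushout-product argument are all correct. For closure under $\otimes$ you take a different route from the paper. You use ${}^\oplus\mathrm{Filt}(\S_G)$, Eklof's lemma and the projection formula. The paper, in \cref{lem:tensors}, instead uses $\Ext^1_{kG}(x\otimes y,w)\cong\Ext^1_{kG}(x,\hom(y,w))$ and checks on finite subgroups that $\hom(y,w)\in\Wfin$. Both routes work, and the paper's is slightly more elementary.

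The gap is in the unit axiom. You reduce it to the inclusion $\Cof(kF)^\perp\otimes\Cof(kF)\subseteq\Cof(kF)^\perp$ for finite $F$, but you prove this only when $x$ is finitely generated over $k$. Your fallback for general $x$ does not supply it. Monoidality of $\Mod(kF)$ (\cref{prop-monoidal-finite}) only says that tensoring with a cofibrant object preserves weak equivalences between \emph{cofibrant} objects, by Ken Brown. It gives nothing about an arbitrary trivial object $w$. Your sketch leaves the general inclusion unproved, and it is in fact not needed.

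The missing observation is that your particular $w=\ker(q)$ is itself $k$--projective. The sequence $0\to w\to Qk\to k\to 0$ splits over $k$ because $k$ is $k$--projective. Moreover $Qk$ is $k$--projective by \cref{rem-Wfin-contains-cof}. Hence $\res^G_F w\in\Cof(kF)\cap\Cof(kF)^\perp=\mathrm{Proj}(kF)$ for every finite $F$. Then $\res^G_F(w\otimes x)\cong\res^G_F w\otimes\res^G_F x$ is $kF$--projective by \cref{prop-tensor-proje-latti}, so $w\otimes x\in\Wfin$ directly from the definition. This is essentially the paper's argument.

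Equivalently, you could rescue your fallback by noting that $\res^G_F(Qk)$ is cofibrant in $\Mod(kF)$. Then $\res^G_F(q)$ is a cofibrant replacement of the unit there, and the unit axiom of \cref{prop-monoidal-finite} applies. That is the same key point, and your proposal never states it.
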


We need the following result from \cite[Proposition 7.2]{Hov02} which gives conditions for an abelian model category to be monoidal.

\begin{theorem}\label{thm-hov}
   Let $\mathcal{A}$ be a complete and cocomplete abelian category  with a symmetric monoidal structure. Suppose that $\mathcal{A}$ is endowed with an abelian model structure satisfying the following properties:
   \begin{enumerate}
       \item For any cofibrant replacement $Q\mathbb 1\to \mathbb 1 $, and any cofibrant object $X$, the canonical map  $X\otimes Q\mathbb 1\to X\otimes \mathbb 1 \simeq X$ is a weak equivalence. 
       \item Tensoring two cofibrant objects produces a cofibrant object.  Moreover, tensoring a cofibrant with a trivially cofibrant produces a trivially cofibrant.
       \item If $f\colon X\to Y$ is a cofibration, and $Z\in \mathcal{A}$, then $Z\otimes f\colon Z\otimes X\to Z\otimes Y$ is a monomorphism. 
   \end{enumerate}
Then, $\mathcal{A}$ is a monoidal model category. 
\end{theorem}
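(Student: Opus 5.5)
The plan is to verify directly the two axioms of \cite[Definition~4.2.6]{Hov99}: the pushout-product axiom and the unit axiom. The unit axiom is exactly hypothesis (a), so nothing needs to be done there. For the pushout-product axiom I use that the model structure is abelian. A (trivial) cofibration is then the same as a monomorphism whose cokernel is (trivially) cofibrant. So it suffices to prove the following claim: for cofibrations $f\colon A\to B$ and $g\colon X\to Y$ with cokernels $C$ and $D$, the pushout-product map
\[
f\,\square\, g\colon P\coloneqq (A\otimes Y)\sqcup_{A\otimes X}(B\otimes X)\longrightarrow B\otimes Y
\]
is a monomorphism with cokernel isomorphic to $C\otimes D$. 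Once this is known, hypothesis (b) shows that $C\otimes D$ is cofibrant, and trivially cofibrant if $f$ or $g$ is a trivial cofibration. Hence $f\,\square\, g$ is a cofibration, and a trivial one in the latter case.

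\textbf{Step 1: a $3\times 3$ grid of short exact sequences.} Tensoring is right exact, since it has a right adjoint in the closed setting. By hypothesis (c), applied with $Z=A,B,C$ and the cofibration $g$, the maps $A\otimes g$, $B\otimes g$ and $C\otimes g$ are monomorphisms. Hence the three rows
\[
0\to Z\otimes X\to Z\otimes Y\to Z\otimes D\to 0, \qquad Z\in\{A,B,C\},
\]
are short exact. By symmetry of the monoidal structure and (c) applied to $f$ with $Z=X,Y,D$, the three columns $0\to A\otimes Z\to B\otimes Z\to C\otimes Z\to 0$ are short exact as well.

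\textbf{Step 2: injectivity and the cokernel.} The map $B\otimes X\to P$ is a pushout of the monomorphism $A\otimes X\to A\otimes Y$. In an abelian category, a pushout of a monomorphism is again a monomorphism with the same cokernel, so $P/(B\otimes X)\cong A\otimes D$. On the other side, $(B\otimes Y)/(B\otimes X)\cong B\otimes D$ by the middle row. Therefore $f\,\square\, g$ is part of a morphism of short exact sequences
\[
\begin{tikzcd}
0 \arrow[r] & B\otimes X \arrow[r]\arrow[d,equal] & P \arrow[r]\arrow[d,"f\square g"] & A\otimes D \arrow[r]\arrow[d,"f\otimes D"] & 0\\
0 \arrow[r] & B\otimes X \arrow[r] & B\otimes Y \arrow[r] & B\otimes D \arrow[r] & 0
\end{tikzcd}
\]
whose left vertical map is the identity. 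The snake lemma then gives $\ker(f\,\square\, g)\cong\ker(f\otimes D)$ and $\mathrm{coker}(f\,\square\, g)\cong\mathrm{coker}(f\otimes D)$. The map $f\otimes D$ is a monomorphism by (c), and its cokernel is $C\otimes D$ by the column for $Z=D$. This proves the claim.

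\textbf{Main obstacle.} The only real point is Step 2, namely showing that $f\,\square\, g$ is injective. Hypothesis (c) is used exactly here: it makes all the tensored sequences left exact, in the absence of any flatness assumption. One must also check that the identification $P/(B\otimes X)\cong A\otimes D$ is compatible with the maps in the diagram. This is a routine diagram chase using the naturality of the row sequences in $Z$. With the claim in hand, the remaining verifications are formal consequences of (a) and (b).
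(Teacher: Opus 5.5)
Your proof is correct. It is essentially the standard argument behind \cite[Proposition 7.2]{Hov02}, which the paper simply cites without reproving: hypothesis (c) makes cofibrations pure, so $f\,\square\, g$ is a monomorphism with cokernel $C\otimes D$, and then (b) and (a) give the pushout-product and unit axioms. The one thing to state explicitly is your use of closedness (right exactness and additivity of $\otimes$ in each variable); this is not written in the theorem, but it is part of Hovey's definition of a monoidal model category and holds for $\Mod(kG)$.
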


We first need to deal with finite groups. 

\begin{proposition}\label{prop-monoidal-finite}
    Let $G$ be a finite group and $k$ be an arbitrary commutative ring. Then $(\Mod(kG),\otimes, k)$ is a symmetric monoidal model category where $\Mod(kG)$ is equipped with the model structure of  \cref{thm-proper-coto}.
\end{proposition}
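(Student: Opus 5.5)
We plan to verify the three conditions of \cref{thm-hov} for the model structure of \cref{thm-proper-coto}. Since $G$ is finite, \cref{rem:finite_W} gives $\mathrm{PCof}(kG)=\Cof(kG)$, and by \cref{prop:identifications} this is exactly the class of $kG$--modules that are projective over $k$. The trivially cofibrant objects are $\mathrm{PCof}(kG)\cap\Wfin=\mathrm{Proj}(kG)$, as shown in the proof of \cref{thm-proper-coto}. So the whole argument reduces to elementary properties of $k$--projective modules and projective $kG$--modules.

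For condition (1), observe that the unit $k$ (trivial action) is itself $k$--projective, hence cofibrant. We may therefore first take $Q k = k$ with the identity map. For an arbitrary cofibrant replacement $q\colon Qk\to k$, we use that both $Qk$ and $k$ are cofibrant and $q$ is a weak equivalence. Then $q$ is an isomorphism in $\mathsf{St}\Cof(kG)$, i.e.\ a stable equivalence between cofibrant objects. Tensoring with a cofibrant $X$ is an exact functor on $\Cof(kG)$ that preserves projectives, by \cref{prop-tensor-proje-latti}. Hence it descends to the stable category and carries stable equivalences to stable equivalences, so $X\otimes q$ is a weak equivalence. Concretely, we factor $q$ as a cofibration with projective cokernel followed by a split epimorphism with projective kernel, and tensor each piece with $X$.

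For condition (2), a tensor product of two $k$--projective modules is $k$--projective, so cofibrant $\otimes$ cofibrant is cofibrant. If $X$ is cofibrant, hence $k$--projective, and $P$ is trivially cofibrant, i.e.\ $kG$--projective, then $X\otimes P$ is $kG$--projective by \cref{prop-tensor-proje-latti}, so it is trivially cofibrant.

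For condition (3), let $f\colon X\to Y$ be a cofibration, i.e.\ a monomorphism with $k$--projective cokernel $C$. Then $0\to X\to Y\to C\to 0$ splits over $k$. Tensoring over $k$ with any $Z$ (diagonal action) preserves $k$--split exact sequences, so $Z\otimes f$ is a monomorphism. Symmetry of the monoidal structure is inherited from $\Mod(kG)$. The only mildly delicate point is condition (1) for an arbitrary cofibrant replacement, and the reduction to a stable equivalence between cofibrants described above handles it.
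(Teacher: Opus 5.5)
Your proposal is correct and follows the paper's proof. Both use $\mathrm{PCof}(kG)=\Cof(kG)=\{k\text{--projective modules}\}$ to reduce to conditions (a)--(c) of \cref{thm-hov}, handling (b) with \cref{prop-tensor-proje-latti} and (c) with $k$--splitness of cofibrations; the only difference is that the paper settles (a) by noting that $k$ is cofibrant, whereas you also argue, via the stable category (or the explicit split factorization), that an arbitrary cofibrant replacement $Qk\to k$ still gives a weak equivalence after tensoring with a cofibrant $X$, which is a correct and harmless elaboration.
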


\begin{proof}
Since $G$ is finite from \cref{rem:finite_W} we know that the proper model structure on $\Mod(kG)$ is simply the model structure induced by the Hovey triple 
    \[
    (\Cof(kG),\Cof(kG)^\perp,\Mod(kG)).
    \]
    Moreover, we already now that the cofibrant objects are precisely those $kG$--modules that are $k$--projective; see \cref{prop:cof_proj}. 

    In particular, $k$ is cofibrant, and hence condition (a) from \cref{thm-hov} holds. Now, (b) follows by \cref{prop-tensor-proje-latti}, and (c) is a direct consequence of the fact that any cofibration is a monomorphism with cofibrant cokernel, thus fits into a $k$-split short exact sequence. Hence tensoring a cofibration with any $kG$--module produces a monomorphism. This completes the proof. 
\end{proof}

We will also need the following technical result.

 \begin{lemma}
\label{lem:tensors}
Let $x\in\mathrm{PCof}(kG)$ and let $y$ be a $k$--projective $kG$--module. Then $x\otimes y\in\mathrm{PCof}(kG).$ 
\end{lemma}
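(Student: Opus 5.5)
Since $\mathrm{PCof}(kG)={}^\oplus\mathrm{Filt}(\mathcal{S}_G)$ by \cref{rem:complete} and \cref{coro-wfin-complete}, and since $-\otimes y$ is exact and preserves coproducts and direct summands (because $y$ is $k$--projective, hence $k$--flat), the functor $-\otimes y$ sends summands of transfinite extensions to summands of transfinite extensions. So the plan is to reduce to generators, compute there, and then close up using Eklof's lemma.

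\textbf{Step 1: reduction to generators.} A continuous filtration $(x_\alpha)$ of a module $x$ with subquotients in $\mathcal{S}_G$ is carried by $-\otimes y$ to a continuous filtration of $x\otimes y$. Its subquotients are $s\otimes y$ with $s\in\mathcal{S}_G$. Here we use that tensoring with a flat module preserves monomorphisms and direct limits. By Eklof's lemma, and since $\mathrm{PCof}(kG)$ is closed under summands, it therefore suffices to show $s\otimes y\in\mathrm{PCof}(kG)$ for each generator $s=\ind_F^G x'$ with $F\in\Fin(G)$ and $x'\in\mathcal{I}_F$.

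\textbf{Step 2: computation on a generator.} We need a mild point about the generating set. Every $x'\in\mathcal{I}_F$ lies in ${}^\perp(\Cof(kF)^\perp)=\Cof(kF)$. So by \cref{prop:cof_proj} (the $F$ finite case), $x'$ is simply a $k$--projective $kF$--module. By the projection formula,
\[
\ind_F^G(x')\otimes y\;\cong\;\ind_F^G\bigl(x'\otimes\res^G_F y\bigr).
\]
The module $x'\otimes\res^G_F y$ is a tensor product of two $k$--projective modules, hence $k$--projective. So it lies in $\Cof(kF)=\mathrm{PCof}(kF)$ by \cref{prop:cof_proj} and \cref{rem:finite_W}.

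\textbf{Step 3: induction.} By \cref{lemma-Quillen-adj}, $\ind_F^G$ is left Quillen and exact, so it preserves proper cofibrant modules. Hence $\ind_F^G(x'\otimes\res^G_F y)\in\mathrm{PCof}(kG)$, which completes Step 1's reduction.

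The main point needing care is Step 1, namely that $-\otimes y$ preserves filtrations. Flatness of $y$ over $k$ handles this, since $\otimes_k$ with the diagonal action agrees with $\otimes_k$ on underlying $k$--modules. Alternatively, one can avoid filtrations altogether by a direct Ext argument. For $w\in\Wfin$, adjunction gives $\mathrm{Ext}^1_{kG}(x\otimes y,w)\cong\mathrm{Ext}^1_{kG}(x,\hom(y,w))$. This holds because $-\otimes y$ is exact and preserves projectives by \cref{prop-tensor-proje-latti}. It would then remain to show $\hom(y,w)\in\Wfin$. That reduces, via restriction to finite $F$, to the finite-group statement that $\hom(y',-)$ preserves $\Cof(kF)^\perp$ for $k$--projective $y'$, which is again the finite case of Step 2. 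I would use the filtration route as the primary argument.
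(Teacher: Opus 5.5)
Your proof is correct, but your primary route differs from the paper's. The paper's proof is essentially the direct $\mathrm{Ext}$ argument you sketch as your alternative. It uses $\mathrm{Ext}^1_{kG}(x\otimes y,w)\cong\mathrm{Ext}^1_{kG}(x,\hom(y,w))$, which you justify correctly (exactness of $-\otimes y$ plus \cref{prop-tensor-proje-latti}) and the paper simply asserts. It then shows $\hom(y,w)\in\Wfin$ by restricting to a finite $H$ and computing $\mathrm{Ext}^1_{kH}(\sigma,\hom(\res^G_H y,\res^G_H w))\cong\mathrm{Ext}^1_{kH}(\sigma\otimes\res^G_H y,\res^G_H w)=0$, since $\sigma\otimes\res^G_H y$ is again a lattice and $\res^G_H w\in\W_H$.

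The paper's route is shorter. It needs neither the deconstruction $\mathrm{PCof}(kG)={}^\oplus\mathrm{Filt}(\mathcal{S}_G)$ nor \cref{lemma-Quillen-adj}, and it records along the way the dual fact that $\hom(y,-)$ preserves $\Wfin$ for $k$--projective $y$.

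Your filtration route instead mirrors the proof of \cref{lemma-Quillen-adj}. You reduce via Eklof's lemma to the generators $\ind_F^G x'$, where the projection formula and closure of lattices under $\otimes_k$ do the work, and then use that induction preserves proper cofibrants (which also follows directly from Eckmann--Shapiro). This makes visible that tensor closure is inherited from the finite subgroups. The same template shows that any exact, colimit-preserving additive endofunctor sending $\mathcal{S}_G$ into $\mathrm{PCof}(kG)$ preserves $\mathrm{PCof}(kG)$.

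One small caveat, which the paper shares: describing the left class as summands of $\mathcal{S}$-filtered modules in general requires the generating set to contain a generator such as $kG$. This is harmless here. Adding $kG$ to $\mathcal{S}_G$ does not change $\mathcal{S}_G^\perp$, and $kG\otimes y$ is projective by \cref{prop-tensor-proje-latti}.
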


 \begin{proof}
For any $w\in\mathcal{W}_G$ we want to prove the vanishing of 
\[
\mathrm{Ext}^1_{kG}(x\otimes y,w)\cong \mathrm{Ext}^1_{kG}(x,\hom(y,w)).
\]
For this, it suffices to show that for any finite subgroup $H\leqslant  G$ the $kH$--module $\mathrm{res}^G_H\hom(y,w)$ belongs in $\mathrm{Cof}({kH})^{\perp}$. By definition of the $G$-action on $\hom(y,w)$, it is clear that $\res^G_F(\hom(y,w))$ agrees with $\hom(\res^G_Fy,\res^G_F w)$ as $kF$--modules. Now, for any $\sigma\in\mathrm{Cof}(kH)$ we have an isomorphism $$\Ext^1_{kH}(\sigma,\hom(\res^G_Hy,\res^G_Hw))\cong \Ext^1_{kH}(\sigma \otimes\res^G_Hy,\res^G_Hw).$$
These $\Ext^1$ groups vanish since $\sigma\otimes\res^G_Hy$ belongs in $\mathrm{Cof}(kH)$ as a tensor product of lattices and $\res^G_Hw$ belongs in $\W_H$ by \cref{rem:finite_W}. Thus we have proved that $x\otimes y\in\mathrm{PCof}(kG)$. 
%
%
\end{proof}


\begin{proof}[Proof of \cref{prop-monoidal-proper}] 
We will verify the conditions of \cref{thm-hov}. To prove (c), note again that any cofibration $f$ fits into a $k$-split short exact sequence, hence $f\otimes x$ is a monomorphism for any $kG$--module $x$. 

To prove (b), let $x,y\in \mathrm{PCof}(kG)$ and let $z\in \mathrm{PCof}(kG)\cap \Wfin=\mathrm{Proj}(kG)$. Since $x$ is $k$--projective by \cref{rem-Wfin-contains-cof}, we deduce from \cref{prop-tensor-proje-latti} that $x\otimes z$ is $kG$--projective. On the other hand, by \cref{lem:tensors} we deduce that $x\otimes y \in\mathrm{PCof}(kG)$.

To prove (a), fix a cofibrant replacement $\alpha\colon Qk\to k$ and an object $x$ in $\mathrm{PCof}(kG)$. The morphism $\alpha$ is a trivial fibration in the abelian model structure $\Mod(kG)$, so $\alpha$ is an epimorphism with kernel in $\Wfin$. We want to show that the epimorphism 
\[
x\otimes \alpha\colon x\otimes Qk\to x\otimes k\simeq x
\]
is a weak equivalence, equivalently, a trivial fibration in the proper stable module category of $G$.
By \cref{lemma-weak-equivalences},  trivial fibrations are detected upon restriction to finite subgroups. Let $F\in \Fin(G)$. Since 
\[
\res^G_F \colon \mathrm{StMod}^{\mathrm{prop}}_{kG}\to \mathrm{StMod}^{\mathrm{prop}}_{kF}
\]
is right Quillen by \cref{lemma-Quillen-adj}, it follows that $\res^G_F(\alpha)\colon\res^G_F(Qk)\rightarrow \res^G_F(k)$ is a trivial fibration, which in fact splits over $k$ since $\res^G_F(k)$ is a cofibrant $kF$--module (see \cref{prop:cof_proj}). 
Thus, applying $\res^G_F(x)\otimes-$ produces the epimorphism $\res^G_F(x)\otimes\res^G_F(\alpha)$ with kernel $\res^G_F(x)\otimes\res^G_F(\mathrm{ker}(\alpha))$.
Now, from \cref{rem-Wfin-contains-cof} and \cref{prop:cof_ind_kes} we have that $\res^G_F(x)\in\Cof(kF)$. In addition, we have that $\res^G_F(\mathrm{ker}(\alpha))\in\W_{F}\cap \Cof(kF)$; indeed, this  follows by \cref{rem:finite_W} and the fact that both $\res^G_F(Qk)$ and $\res^G_F(k)$ are contained in $\Cof(kF)$, with the latter class being closed under kernels of epimorphisms. Hence by what was proved above on part (b) we deduce that $\res^G_F(x)\otimes\res^G_F(\mathrm{ker}(\alpha))$ belongs in $\W_{F}\cap \Cof(kF)$, which proves that $\res^G_F(x)\otimes\res^G_F(\alpha)$ is a trivial fibration. Lastly, we notice an isomorphism $\res^G_F(x)\otimes\res^G_F(\alpha)\cong \res^G_F(x\otimes \alpha)$ as maps, which concludes the proof.
\end{proof}

As an immediate consequence we obtain the following result.

\begin{corollary}\label{coro-proper-ttcat}
     The proper stable module category $\mathrm{StMod}^{\mathrm{prop}}_{kG}$ is a tensor-triangulated category. 
\end{corollary}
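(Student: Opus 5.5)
The claim follows from \cref{prop-monoidal-proper} together with the standard passage from monoidal model categories to their homotopy categories; what remains is to check that the derived tensor product is compatible with the triangulated structure. I will use the identification of the homotopy category with $\mathsf{St}\,\mathrm{PCof}(kG)$ from \cref{thm-proper-coto} and \cref{prop:her_cat}.

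First, since $(\Mod(kG),\otimes,k)$ is a symmetric monoidal model category, \cite[Theorem~4.3.2]{Hov99} shows that the homotopy category $\mathrm{StMod}^{\mathrm{prop}}_{kG}$ inherits a closed symmetric monoidal structure. It is given by the derived tensor product $x\otimes^{\mathbb{L}}y=Qx\otimes Qy$ with unit $k$ (or its cofibrant replacement $Qk$), and the internal hom is derived from $\hom(-,-)$. On the Frobenius model this is simply $\otimes_k$ restricted to $\mathrm{PCof}(kG)$. Indeed, by condition (b) verified in the proof of \cref{prop-monoidal-proper}, the class $\mathrm{PCof}(kG)$ is closed under $\otimes_k$, and tensoring with a cofibrant module preserves $\mathrm{Proj}(kG)=\mathrm{PCof}(kG)\cap\Wfin$. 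Hence $\otimes$ descends to a well-defined bifunctor on $\mathsf{St}\,\mathrm{PCof}(kG)$.

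Next I will check exactness in each variable. For $x\in\mathrm{PCof}(kG)$, the functor $x\otimes-$ is exact on $\mathrm{PCof}(kG)$, because $x$ is $k$-projective by \cref{rem-Wfin-contains-cof}, so it sends conflations to conflations. It also preserves projective-injective objects. An exact functor between Frobenius categories that preserves projectives induces a triangulated functor on the stable categories. Concretely, applying $x\otimes-$ to a sequence $0\to y\to p\to \Omega^{-1}y\to 0$ with $p$ projective yields a natural isomorphism $x\otimes\Omega^{-1}y\cong\Omega^{-1}(x\otimes y)$ in the stable category, and distinguished triangles, which come from conflations, are carried to distinguished triangles. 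By symmetry the same holds in the other variable.

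The step I expect to need the most care is the compatibility axiom between the two suspension isomorphisms: the square relating $\Sigma x\otimes\Sigma y$ via the two routes must anticommute. My plan is to realise $\Omega^{-1}$ through the $k$-split sequence $0\to k\to w\to c\to 0$, obtained as a cofibrant resolution of the unit. This exhibits $\Sigma(-)\simeq c\otimes-$ up to projectives, so that the suspension isomorphisms become symmetry isomorphisms of $c\otimes c\otimes x\otimes y$. The required sign then reduces to the swap of the two copies of $c$ acting as $-1$ on $c\otimes c\simeq\Sigma^2 k$, which is the standard argument.

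Alternatively, one can avoid these checks by invoking the general fact that the homotopy category of a stable symmetric monoidal model category whose tensor preserves cofibrations in each variable is tensor-triangulated, as in \cite[Chapter~6]{Hov99}. I would cite that fact if it suffices.
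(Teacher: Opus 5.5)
Your proposal is correct. Your fallback is in substance the paper's own argument: the paper records the corollary as an immediate consequence of \cref{prop-monoidal-proper}, which gives a symmetric monoidal model structure, together with the stability of the model structure from \cref{thm-proper-coto}. The tensor-triangulated structure on the homotopy category is then the standard output of that situation; the enhancement in \cref{prop-stmod-is-sht} gives it equally well.

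Your main route differs: it verifies the axioms by hand on the Frobenius model $\mathsf{St}\,\mathrm{PCof}(kG)$. This is more explicit and shows where \cref{lem:tensors} and \cref{prop-tensor-proje-latti} are used. The price is the sign bookkeeping that the citation gives for free, and two steps need tightening.

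\emph{First, the sequence $0\to k\to w\to c\to 0$.} It is not a cofibrant resolution of the unit; that would be the epimorphism $Qk\to k$. What you need is the sequence provided by completeness of the cotorsion pair $(\mathrm{PCof}(kG),\mathcal{W}_G)$, with $w\in\mathcal{W}_G$ and $c\in\mathrm{PCof}(kG)$. It should not be a $w$ of finite projective dimension as in \cref{prop:PC1}, since such a $w$ exists only when $\mathrm{PCcd}_kG<\infty$. With the correct choice, the identification $\Sigma\simeq c\otimes-$ needs one more check, which you should state:
\begin{itemize}
\item $w$ is $k$-projective, so each $\res^G_F w$ lies in $\mathrm{Cof}(kF)\cap\mathrm{Cof}(kF)^\perp=\mathrm{Proj}(kF)$.
\item Hence for $x\in\mathrm{PCof}(kG)$ the module $x\otimes w$ is locally projective.
\item By \cref{lem:tensors}, $x\otimes w$ is also proper cofibrant, so it lies in $\mathrm{PCof}(kG)\cap\mathcal{W}_G=\mathrm{Proj}(kG)$.
\end{itemize}

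\emph{Second, the sign.} The statement that the twist acts as $-1$ on $c\otimes c$ is the entire content of the sign axiom, not a formality. It comes, for instance, from the anticommuting corner square of the $3\times 3$ diagram obtained by tensoring a defining conflation with itself. For infinite $G$ the unit $k$ is not even Gorenstein projective, so $k\notin\mathrm{PCof}(kG)$. That diagram therefore has to be built in $\mathsf{St}\,\mathrm{PCof}(kG)$ from $Qk$, for example from $0\to Qk\to p\to c'\to 0$ with $p$ projective, or else in the homotopy category of the model structure. It cannot be built in $\mathsf{St}\,\mathrm{PCof}(kG)$ with unit $k$, as your ``$c\otimes c\simeq\Sigma^2 k$'' suggests.

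Neither point is fatal, but if you take the hands-on route these are the steps that must be written out. Otherwise your one-line citation, which matches the paper, is enough.
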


\section{A regular base}

Recall that a commutative ring $k$ is called regular if it is Noetherian and $k_\pp$ has finite global dimension for all primes $\pp$ of $k$. Our main result of this section is the following. 

\begin{theorem}\label{thm-proper-compact} 
 Let $k$ be a regular commutative ring and $G$ be a group. Then the proper stable module category $\mathrm{StMod}^{\mathrm{prop}}_{kG}$ is compactly generated tensor-triangulated category. 
\end{theorem}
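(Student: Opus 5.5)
The tensor-triangulated structure is already given by \cref{coro-proper-ttcat}, so what remains is compact generation. The plan is to produce a set of compact objects in $\mathrm{StMod}^{\mathrm{prop}}_{kG}$ that jointly detects zero objects. The candidate generators are the induced modules $\ind_F^G c$, where $F\in\Fin(G)$ and $c$ runs over a set of representatives of the compact generators of $\mathsf{St}\Cof(kF)\simeq\mathrm{StMod}^{\mathrm{prop}}_{kF}$. By \cref{rec-regular-ring} and \cref{rec-modlf-comp}, for $k$ regular these generators may be taken to be the lattices in $\mathrm{cof}(kF)=\mathrm{mod}(kF,k)$, and there is only a set of them up to isomorphism.

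\emph{Step 1: compactness.} By \cref{lemma-Quillen-adj}, $(\ind_F^G,\res^G_F)$ is a Quillen adjunction, so it induces a derived adjunction $\ind_F^G\dashv\res^G_F$ on homotopy categories. The remark after \cref{lemma-Quillen-adj} says $\res^G_F$ preserves small coproducts, because it is also left Quillen with respect to $\mathrm{coind}$. Since the right adjoint preserves coproducts, the left adjoint $\ind_F^G$ preserves compact objects. Hence each $\ind_F^G c$ is compact in $\mathrm{StMod}^{\mathrm{prop}}_{kG}$, because $c$ is compact in $\mathrm{StMod}^{\mathrm{prop}}_{kF}$. All three functors are exact, so the derived functors are simply the functors applied to proper cofibrant modules. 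Concretely, $\ind_F^G c$ is proper cofibrant, being the induction of a cofibrant lattice.

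\emph{Step 2: generation.} Let $x\in\mathrm{StMod}^{\mathrm{prop}}_{kG}$ satisfy $\Hom(\Sigma^n\ind_F^G c,x)=0$ for all $n\in\mathbb{Z}$, all $F\in\Fin(G)$ and all generators $c$. By adjunction this gives $\Hom(\Sigma^n c,\res^G_F x)=0$ in $\mathrm{StMod}^{\mathrm{prop}}_{kF}$, using that $\res$ commutes with suspensions as an exact functor. Since the $c$'s generate the compactly generated category $\mathsf{St}\Cof(kF)$ (\cref{rec-regular-ring}), it follows that $\res^G_F x\simeq 0$ for every finite $F$. Joint conservativity, \cref{coro-detection}, then yields $x\simeq 0$. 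Therefore the set $\{\ind_F^G c\}$ is a set of compact generators.

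\emph{Main obstacle.} The delicate point is the set-theoretic and model-theoretic bookkeeping in Step 1. We must verify that the derived adjunction really identifies $\Hom_{\mathrm{StMod}^{\mathrm{prop}}_{kG}}(\ind_F^G c,x)$ with $\Hom_{\mathrm{StMod}^{\mathrm{prop}}_{kF}}(c,\res^G_F x)$. We must also check that coproducts in the homotopy category are computed by coproducts of modules; this holds since $\mathrm{PCof}$ is closed under coproducts and all objects are fibrant. A further check is that $\Fin(G)$ has only a set of conjugacy classes and each $\mathrm{cof}(kF)$ is essentially small, so we get a genuine set of generators. I expect these checks to be routine once the Quillen adjunctions of \cref{lemma-Quillen-adj} are in hand. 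The essential input from regularity enters only through \cref{rec-regular-ring}.
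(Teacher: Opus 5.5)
Your proof is correct and follows the paper's argument. The paper packages your Steps 1 and 2 into the abstract \cref{lemma-compact-abstract}: coproduct-preserving right adjoints make the left adjoints preserve compacts, and joint conservativity of the family is equivalent to the induced compacts generating. It then applies that lemma with exactly your inputs, namely \cref{coro-detection}, the induction/restriction adjunction from \cref{lemma-Quillen-adj}, and the identification of $\mathrm{StMod}^{\mathrm{prop}}_{kF}$ with the compactly generated category of \cref{rec-regular-ring} for $k$ regular.
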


We need the following abstract criteria to detect compact generation. 

\begin{lemma}\label{lemma-compact-abstract}
   Let $(f^\ast_i\colon \T\to \S_i)_{i\in I}$ be a family of coproduct preserving exact functors between triangulated categories with small coproducts. Assume that the following properties hold:
   \begin{enumerate}
       \item  $\S_i$ is compactly generated for all $i\in I$; with the subcategory of compact objects denoted by $\S_i^c$.
       \item For each $i\in I$, the functor $f^\ast_i$ has a left adjoint $(f_i)_!$.   
   \end{enumerate}
   Then $\mathrm{loc}((f_i)_!(\S_i^c)\mid i\in I)=\T$ if and only if $(f_i^\ast)_{i\in I}$ is jointly conservative; that is, $y\simeq 0$ if and only if $f_i^*(y)\simeq 0$ for all $i\in I$. Note that in this case $\T$ is compactly generated, and
   \[
   \{ (f_i)_!(\mathcal{X}_i)\mid \mathcal{X}_i \mbox{ a set of compact generators in } \S_i, \, i\in I\}
   \]
   is a set of compact generators. 
\end{lemma}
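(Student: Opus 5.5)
The plan is to prove both implications by working with the right orthogonal of the candidate generators, the standard setup for localizing subcategories generated by compact objects. First, compactness: each $(f_i)_!$ sends compact objects of $\S_i$ to compact objects of $\T$. Indeed, for $c\in\S_i^c$ and a family $(y_\lambda)$ in $\T$, adjunction gives
\[
\Hom_{\T}\bigl((f_i)_!c,\textstyle\coprod_\lambda y_\lambda\bigr)\cong \Hom_{\S_i}\bigl(c,f_i^\ast\textstyle\coprod_\lambda y_\lambda\bigr)\cong \Hom_{\S_i}\bigl(c,\textstyle\coprod_\lambda f_i^\ast y_\lambda\bigr)\cong\textstyle\bigoplus_\lambda \Hom_{\T}((f_i)_!c,y_\lambda).
\]
Here we use that $f_i^\ast$ preserves coproducts and that $c$ is compact. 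I also note that $(f_i)_!$ is exact, since it is a left adjoint of an exact functor between triangulated categories.

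Next, fix for each $i$ a set $\mathcal{X}_i$ of compact generators of $\S_i$, closed under shifts, and put $\mathcal{G}=\bigcup_i(f_i)_!(\mathcal{X}_i)$, which is a set of compact objects of $\T$. The key identity is
\[
y\in\mathcal{G}^{\perp}\iff \Hom_{\S_i}(x,f_i^\ast y)=0 \text{ for all } x\in\mathcal{X}_i,\ i\in I \iff f_i^\ast y\simeq 0 \text{ for all } i.
\]
The last equivalence holds because $\mathcal{X}_i$ compactly generates $\S_i$, so its right orthogonal (graded, via shifts) vanishes.

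For the direction assuming joint conservativity: $\mathcal{G}^\perp=0$. Since $\mathcal{G}$ is a set of compact objects, Neeman's theorem gives that $\T$ is compactly generated by $\mathcal{G}$ and $\mathrm{loc}(\mathcal{G})=\T$. The localizing subcategory generated by $\mathcal{G}$ equals the one generated by all $(f_i)_!(\S_i^c)$, because $\S_i^c\subseteq\mathrm{loc}(\mathcal{X}_i)$ and $(f_i)_!$ is exact and coproduct-preserving. This gives the equality and the stated set of compact generators.

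For the converse, suppose $\mathrm{loc}((f_i)_!(\S_i^c))=\T$ and $f_i^\ast y\simeq0$ for all $i$. Then $y\in\mathcal{G}^\perp$. The class ${}^\perp y$ of objects $z$ with $\Hom(\Sigma^n z,y)=0$ for all $n$ is localizing, because $\Hom(-,y)$ turns coproducts into products and triangles into long exact sequences. This class contains $(f_i)_!(\S_i^c)$ by adjunction, so it is all of $\T$. In particular $\Hom(y,y)=0$, hence $y\simeq0$.

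The only nontrivial input is Neeman's theorem that a set of compact objects with trivial right orthogonal generates. Beyond that, the main care needed is verifying compactness of $(f_i)_!c$, which is exactly where the coproduct-preservation of $f_i^\ast$ is used.
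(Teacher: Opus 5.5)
Your proposal is correct and follows essentially the same route as the paper. Both arguments get compactness of $(f_i)_!c$ from the fact that $f_i^\ast$ preserves coproducts. Both use the adjunction isomorphism $\Hom_{\T}((f_i)_!z,y)\cong\Hom_{\S_i}(z,f_i^\ast y)$ to show that the right orthogonal of the candidate generators consists exactly of the objects killed by every $f_i^\ast$. Both then invoke Neeman's criterion: a set of compact objects with vanishing right orthogonal generates $\T$.

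Your version is slightly more careful than the paper's in two places. You fix a set $\mathcal{X}_i$ of shift-closed compact generators rather than using all of $\S_i^c$, which sidesteps the size issue. You also prove the direction from $\mathrm{loc}(\ldots)=\T$ to joint conservativity directly, via the localizing subcategory ${}^\perp y$, so that direction does not need Neeman's theorem.
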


\begin{proof}
By assumption the various $(f_i)_!$ preserve compact objects, so that for all $i\in I$ we have that $(f_i)_!(\mathcal{S}_i^c)\subseteq \T^c$. 
The condition$\mathrm{loc}((f_i)_!(\S_i^c)\mid i\in I)=\T$ is equivalent to the following property: for any $y\in \T$ we have that $y\simeq 0$ if and only if 
\[
\forall x\in \{(f_i)_!(\S_i^c)\,|\, i\in I\}\,\, \mbox{we have that}\, \Hom_\T(x,y)=0.
\] 
Since $x$ must have the form $(f_i)_!(z)$ for some $i\in I $ and some $z\in \S_i^c$, we can rewrite this as 
\begin{align*}
y\simeq 0 \iff &  \forall z\in \S_i^c, \forall i\in I,\,\,  \Hom_\T((f_i)_!(z),y)=0\\ 
 \iff & \forall z\in \S_i^c, \forall i\in I,\,\, \Hom_{\S_i}(z, f^\ast y)=0 \\ 
 \iff & \forall i\in I,\,\, f^\ast_i(y)\simeq 0
\end{align*}    
where the last equivalence follows from assumption $(a)$. This of course implies that $\T$ is compactly generated since the $(f_i)_!$ preserve compact objects.  
\end{proof}

\begin{proof}[Proof of \cref{thm-proper-compact}]
    We already know that $\mathrm{StMod}^{\mathrm{prop}}_{kG}$ is a tt-category with small coproducts by \cref{coro-proper-ttcat}. It remains to show that it is compactly generated. Consider the family of restriction functors 
\[
 (\res^G_F \colon \mathrm{StMod}^{\mathrm{prop}}_{kG}\to \mathrm{StMod}^{\mathrm{prop}}_{kF})_{F\in \Fin(G)}
 \]
 which is jointly-conservative by \cref{coro-detection}. Moreover, we have that the restriction functor has a left adjoint given by induction; see \cref{lemma-Quillen-adj}. Now, recall that the proper model structure on $\Mod(kF)$ agrees with model structure obtained from the cotorsion pair $(\Cof(kF),\Cof(kF)^\perp)$, and hence $\mathrm{StMod}^{\mathrm{prop}}_{kF}$ agrees with the stable category of lattices $\mathrm{StMod}(kF,k)$ coming from \cref{cor-frobenius}. Since the ring $k$ is regular, it also agrees with $\mathrm{StMod}^{\mathrm{lf}}(kF,k)$ from \cref{rec-modlf-comp},   which is compactly generated by \cref{rec-regular-ring}. We now invoke \cref{lemma-compact-abstract} to deduce the result. 
\end{proof}

\section{The proper stable $\infty$-category}\label{sec-infty}

In  this section, we introduce the proper stable $\infty$-category which is an $\infty$-categorical enhancement of the proper stable module category introduced in \cref{sec-proper}. We also record some properties of this category. We refer to \cite{Lur17} and \cite{Cis19} for an account on the theory of stable $\infty$-categories, as well as to some surveys on this topic \cite{Ant16} and \cite{Jas26}. 

\begin{recollection} 
    Let $\A$ be a complete and cocomplete category equipped with  a (closed) model structure $(\mathrm{Cofib}, \mathrm{W},\mathrm{Fib})$. The \textit{underlying $\infty$-category}  
    \[
    \mathrm{L}_\mathrm{W}\A\coloneqq \A[\mathrm{W}^{-1}]
    \]
    of $\A$ is the $\infty$-categorical localization of $\A$ at the class of weak equivalences $\mathrm{W}$; see \cite[Definition 1.3.4.1]{Lur17}. In this case, we can identify the homotopy category of $\mathrm{L}_\mathrm{W}\A$ with the 1-categorical
localization of $\A$ at the class of weak equivalences \cite[Remark 7.1.6]{Cis19}. Moreover, by \cite[Remark 1.3.4.16]{Lur17}, the underlying $\infty$-category of $\A$ can equivalently be obtained as the $\infty$-categorical localization of the full subcategory of fibrant-cofibrant objects at the class of weak equivalences. That is, 
\[
\mathrm{L}_\mathrm{W}(\C\cap \F) \xrightarrow[]{\simeq} \mathrm{L}_\mathrm{W}\A
\]
where $\C$ and $\F$ denote the subcategories of cofibrant and fibrant objects, respectively. 
\end{recollection}

\begin{definition}\label{def-inf-proper}
   Let $G$ be a group and $k$ be a commutative ring.  We define the \textit{proper stable module $\infty$-category} 
   \[
   \mathbf{StMod}(kG)\coloneqq \mathrm{L}_{\Wfin} \Mod(kG)
   \]
   as the underlying $\infty$-category of $\Mod(kG)$ equipped with the model structure from \cref{thm-proper-coto}, with its weak equivalences denoted by $\mathcal{W}_G$.
\end{definition}

\begin{remark}
    By definition, we obtain that the homotopy category of $ \mathbf{StMod}(kG)$ agrees with the proper stable module category $\mathrm{StMod}^{\mathrm{prop}}_{kG}$. 
\end{remark}

\begin{remark}
It will be shown later in \cref{cor-ms-stable-agrees} and \cref{rem-stable-inftycat2} that our proper stable module $\infty$-category $\mathbf{StMod}(kG)$ recovers the stable module $\infty$-category introduced in \cite{Gom24}. In fact, just as in \textit{op. cit.}, we can use results already proved to draw some immediate consequences for the proper stable module $\infty$-category.   
\end{remark}

\begin{proposition}\label{prop-stmod-is-sht}
     Let $G$ be a group and $k$ be a commutative ring. Then $ \mathbf{StMod}(kG)$, with monoidal structure induced from \cref{prop-monoidal-proper}, is a presentable, symmetric monoidal stable $\infty$-category, where the tensor product commutes with colimits in each variable.
\end{proposition}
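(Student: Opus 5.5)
The plan is to deduce each property of $\mathbf{StMod}(kG)$ from a structural property of the proper model structure, via standard results on underlying $\infty$-categories of model categories. First, by \cref{thm-proper-coto} the proper model structure on $\Mod(kG)$ is combinatorial: $\Mod(kG)$ is locally presentable (Grothendieck), and the Hovey triple has small cotorsion pairs, generated by the set $\mathcal{S}_G$ of \cref{coro-wfin-complete} and by a generating set for the projectives. The underlying $\infty$-category of a combinatorial model category is presentable (\cite[Proposition 1.3.4.22]{Lur17}). Combinatorial model categories are Quillen equivalent to simplicial combinatorial ones by Dugger's theorem, but Lurie's statement does not need this. Hence $\mathbf{StMod}(kG)$ is presentable.

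Next comes stability. Since $\mathbf{StMod}(kG)$ is pointed and has finite limits and colimits, it suffices to show that the suspension functor is an equivalence. The model structure is stable in the sense of \cref{thm-proper-coto}, meaning the homotopy-category suspension is an auto-equivalence. I would make this concrete as follows. For a proper cofibrant $x$, take a short exact sequence $0\to x\to p\to \Sigma x\to 0$ with $p$ projective-injective in the Frobenius category $\mathrm{PCof}(kG)$; it exists because $\mathrm{PCof}(kG)$ is Frobenius by \cref{prop:her_cat}. Because the model structure is abelian, this sequence is a cofiber sequence in the underlying $\infty$-category: the map $x\to p$ is a cofibration and $p\simeq 0$. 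So the $\infty$-categorical suspension is computed by the Frobenius cosyzygy. Likewise, the syzygy from a projective cover computes $\Omega$, and these two are mutually inverse up to homotopy. Since equivalences in an $\infty$-category are detected on the homotopy category, $\Sigma$ is an equivalence, and $\mathbf{StMod}(kG)$ is stable.

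For the monoidal part, \cref{prop-monoidal-proper} shows that $\Mod(kG)$ is a symmetric monoidal model category, and it is combinatorial. By \cite[Proposition 4.1.7.4 and Example 4.1.7.6]{Lur17}, the localization $\mathrm{L}_{\Wfin}$ of the full subcategory of cofibrant objects, namely $\mathrm{PCof}(kG)$, inherits a symmetric monoidal structure. With it, $\mathrm{PCof}(kG)\to \mathbf{StMod}(kG)$ is symmetric monoidal, and the tensor product is the left derived functor of $\otimes_k$. This is legitimate because by \cref{thm-hov}(b) the tensor product preserves cofibrant objects and acyclic cofibrations between them. The monoidal unit requires some care, since $k$ need not be proper cofibrant; axiom (a) of \cref{thm-hov}, verified in the proof of \cref{prop-monoidal-proper}, is exactly what Lurie's construction needs to handle the cofibrant replacement $Qk$.

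The main remaining point is that the tensor product commutes with colimits in each variable. For fixed cofibrant $x$, the functor $x\otimes-$ is left Quillen, with right adjoint $\hom(x,-)$, by the pushout-product axiom. A left Quillen functor induces a left adjoint between underlying $\infty$-categories (\cite[Proposition 1.3.4.26]{Lur17}), hence it preserves all small colimits. Since every object of $\mathbf{StMod}(kG)$ is represented by a cofibrant module, this gives colimit preservation in each variable. Altogether this shows that $\mathbf{StMod}(kG)$ lies in $\mathrm{CAlg}(\PrL)$. I expect the only genuine subtlety to be the unit, which is handled by condition (a) as above.
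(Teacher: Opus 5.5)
Your proposal is correct and follows essentially the paper's route: presentability from \cite[Proposition 1.3.4.22]{Lur17}, the symmetric monoidal structure from \cite[Proposition 4.1.7.4 and Example 4.1.7.6]{Lur17}, and colimit preservation in each variable from the fact that $x\otimes-$ is left Quillen for cofibrant $x$, which is the content of \cite[Lemma 4.1.8.8]{Lur17} that the paper cites. The differences are that you spell out stability via Frobenius (co)syzygies, which the paper takes for granted from the model structure being stable, and that you flag that the unit $k$ need not be proper cofibrant; your appeal to Lurie there is a bit quick, since his treatment of monoidal model categories takes the unit to be cofibrant, but it is easily repaired by localizing the symmetric monoidal full subcategory $\mathrm{PCof}(kG)\cup\{k\}$, on which condition (a) together with Ken Brown's lemma makes weak equivalences stable under $\otimes$.
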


\begin{proof}
    By \cref{prop-monoidal-proper} and \cref{thm-proper-coto} we have that $\Mod(kG)$ is a
combinatorial stable symmetric monoidal model category. It follows that $\mathbf{StMod}(kG)$ is
presentable and symmetric monoidal by \cite[Proposition 1.3.4.22]{Lur17} and \cite[Example 4.1.7.6]{Lur17}. Additionally, the tensor product on $\mathbf{StMod}(kG)$ commutes
with colimits separately in each variable by \cite[Lemma 4.1.8.8]{Lur17}. 
\end{proof}

We now record some properties about functors induced along a monomorphism of groups.



\begin{corollary}\label{coro-functors-propstable}
    Let $i\colon H\to  G$ be a group monomorphism. Then we have  adjunctions between proper stable module $\infty$-categories 
    \begin{center}
          \begin{tikzcd}[column sep=large, row sep=large]
     \mathbf{StMod}(kH) \arrow[r,"\ind_i", yshift=3mm] \arrow[r, yshift=-3mm, "\mathrm{coind}_i"'] & \mathbf{StMod}(kG). \arrow[l, "\res_i" description] 
    \end{tikzcd}
    \end{center}
    Moreover, the functor  $\res_i$ is strongly monoidal. In particular, we obtain a symmetric monoidal functor 
    \[
    c^\ast_g\colon \mathbf{StMod}(kH)\to \mathbf{StMod}(k{}^g\!H)
    \]
    induced by the conjugation map $c_g^\ast\colon {{}^g\!H}\to H$, for any $g\in G$. 
\end{corollary}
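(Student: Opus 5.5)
The adjunctions come from \cref{lemma-Quillen-adj}, which makes $(\ind_i,\res_i)$ and $(\res_i,\mathrm{coind}_i)$ Quillen adjunctions between the proper model structures. A Quillen adjunction between model categories induces an adjunction between the underlying $\infty$-categories: the left Quillen functor is derived on cofibrant objects, the right Quillen functor on fibrant objects, and the derived functors are adjoint (\cite[Proposition 1.3.4.21]{Lur17}, or the analogous statement in \cite{Cis19}). Every object is fibrant in the proper model structure, and all three functors are exact, so I expect the derived functors to be computed by the underived ones. Concretely, $\res_i$ preserves weak equivalences: it is both left and right Quillen, so by Ken Brown's lemma it preserves weak equivalences between cofibrant objects and between fibrant objects, and the latter is all maps. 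Hence $\res_i$ descends directly to $\mathrm{L}_{\Wfin}$. Similarly $\mathrm{coind}_i$, being right Quillen with all objects fibrant, descends directly. The functor $\ind_i$ is derived via cofibrant replacement. These give the displayed adjoint triple.

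For strong monoidality of $\res_i$, first note that on $\Mod(kG)$ the functor $\res_i$ is strictly symmetric monoidal for the diagonal tensor product: $\res_i(x\otimes_k y)=\res_i x\otimes_k\res_i y$ and $\res_i k=k$. Since $\res_i$ is left Quillen, it preserves cofibrant objects, and it also preserves all weak equivalences. It is therefore a symmetric monoidal left Quillen functor between symmetric monoidal model categories (\cref{prop-monoidal-proper}) in the sense required, with the unit condition immediate because $\res_i$ preserves the unit on the nose. I would then invoke \cite[Example 4.1.7.6]{Lur17} together with the functoriality of the localization $\mathrm{L}_\mathrm{W}$ for symmetric monoidal relative categories. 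Concretely, restrict to cofibrant objects $\mathrm{PCof}(kG)^{\otimes}\to\mathrm{PCof}(kH)^{\otimes}$, which is a symmetric monoidal functor of symmetric monoidal relative categories. It induces a symmetric monoidal functor on the Dwyer--Kan localizations, and these identify with $\mathbf{StMod}$ by \cref{prop-stmod-is-sht}.

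The main obstacle is this coherence step: making sure the symmetric monoidal structure on $\mathbf{StMod}(kG)$ from \cite[Example 4.1.7.6]{Lur17}, built on cofibrant objects, is the one that $\res_i$ respects. I would handle it by working throughout with the cofibrant subcategories and citing \cite[Proposition 4.1.7.4]{Lur17} for functoriality of localization of symmetric monoidal $\infty$-categories along functors that preserve weak equivalences and tensor products.

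Finally, for $g\in G$ the conjugation $c_g\colon {}^g\!H\to H$ is a group isomorphism, in particular a monomorphism. Applying the preceding to $i=c_g$ gives the symmetric monoidal functor $c_g^\ast=\res_{c_g}\colon\mathbf{StMod}(kH)\to\mathbf{StMod}(k{}^g\!H)$.
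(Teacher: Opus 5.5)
Your proposal follows the paper's proof. You lift the Quillen adjunctions of \cref{lemma-Quillen-adj} to the underlying $\infty$-categories (the paper cites \cite{Maz16} for this), get strong monoidality from $\res_i$ being a strictly symmetric monoidal left Quillen functor via \cite[Example 4.1.7.6]{Lur17}, and obtain $c_g^\ast$ as the case $i=c_g$; your Ken Brown remark that $\res_i$ and $\mathrm{coind}_i$ preserve all weak equivalences is a useful extra detail.

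One caveat concerns your coherence step. For infinite $G$ the unit $k$ is never in $\mathrm{PCof}(kG)$: it is not even Benson cofibrant, since $B(G,k)$ has nonzero $G$-fixed points while projective $kG$-modules have none. So $\mathrm{PCof}(kG)^{\otimes}$ is only a non-unital symmetric monoidal relative category, not the symmetric monoidal one you claim. The unit, and unitality of $\res_i$, must then be recovered from the unit axiom in \cref{thm-hov} by a quasi-unitality argument (cf.\ \cite{NS17}, which the paper cites alongside \cite[Example 4.1.7.6]{Lur17}).
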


\begin{proof}
    The Quillen adjunction from \cref{lemma-Quillen-adj} lifts to the $\infty$-categorical setting; see \cite{Maz16}. The fact that $\res_i$ is strongly monoidal follows since $\res_i$ is a symmetric monoidal left Quillen functor between the monoidal structures defining the proper stable module categories. This follows from \cite[Example 4.1.7.6]{Lur17}; see also \cite{NS17} for a more detailed discussion. 
\end{proof}

\subsection{Functoriality and descent} \label{sec-functoriality}

In this section, we investigate the functoriality properties on the group variable of the proper stable module $\infty$-category and conclude with a descent result. We thank Luca Pol for explaining to us some of the arguments involving the functoriality.

The following recollections are from \cite{GP26}.

\begin{recollection}
    Let $G$ be a discrete group and $\F$ be a collection of subgroups of $G$. 
    We write $\O_{G,\F}$ to denote the orbit $\infty$-category of $G$ whose objects are $G$-sets of the form $G/H$ for $H\in \F$ and whose morphisms are given by $G$-equivariant maps. When $\F$ is the collection of all subgroups we simple write $\O_G$. In particular, for any collection $\F$ we have a canonical functor 
    \begin{align}
        \label{eq-iota}
        \iota \colon \O_{G,\F} \to \O_G
    \end{align}
    From here, given an $\infty$-category $\D$,  for any functor $f\colon \O_G^{\nameop}\to \D$ we obtain a restricted functor $f\circ \iota\colon \O_{G,\F}^{\nameop} \to \C$.  If the context is clear, we will omit the functor $\iota$ from the notation to refer to a functor obtained via precomposition with $\iota$.
\end{recollection}

\begin{notation}  
    We write $\PrL$ to denote the $\infty$-category of presentable stable $\infty$-categories and left adjoint functors, and $\CAlg(\PrL)$ to denote the $\infty$-category  of presentable symmetric monoidal stable $\infty$-category satisfying that the tensor product is coproduct preserving in each variable separately and symmetric monoidal left adjoint functors. 
\end{notation}

\begin{remark}\label{rem-functor-mod} 
    Let $G$ be a group and $k$ be a commutative ring.  As observed in the first part of  \cite[Construction 6.5]{GP26}, there is a functor 
    \[
    \Mod(k-)\colon \O_G^{\nameop} \to \mathrm{Cat}^\otimes 
    \]
    which maps a coset $G/H$ to the symmetric monoidal abelian category $\Mod(kH)$, and any $G$-equivariant map $G/H\xrightarrow[]{g} G/H' $ to the restriction functor along the conjugation map $c^\ast_g$. Here the right hand side denotes the category of symmetric monoidal categories. 
    
    We can restrict this functor to a functor from $\O_{G,\F}^{\nameop}$ for any collection of subgroups $\F$ of $G$.
\end{remark}

\begin{recollection}
    Let $\W\mathrm{Cat}_\infty $  be the   $\infty$-category of pairs $(\C, W)$ where $\C$ is an $\infty$-category, and $W$ is a collection of morphisms which is stable under homotopy, composition, and contains all equivalences; that is, $W$ is a subcategory of the homotopy category  $\mathrm{h}\C$ that contains all objects and isomorphisms in $\mathrm{h}\C$. A map in $\W\mathrm{Cat}_\infty$ between $(\C, W) \to (\C', W')$ is a functor of $\infty$-categories $f\colon \C \to \C'$ such that $f(W)\subseteq W'$; see \cite[Construction 4.1.7.1]{Lur17} for further details. 
   In particular, there is a finite product preserving localization functor 
    \begin{equation}\label{eq-localization}
   \W\mathrm{Cat}_\infty \to \mathrm{Cat}_\infty, \quad (\C,W)\mapsto \C[W^{-1}]
    \end{equation}
    see  \cite[Proposition 4.1.7.2]{Lur17}.
\end{recollection}

\begin{construction}\label{const-stmod-functor}
    Consider the abelian category $\Mod(kG)$ equipped with the abelian model structure from \cref{thm-proper-coto}. In this case, we can promote the functor from \cref{rem-functor-mod} to a functor 
     \[
    (\Mod(k-), \W_{-}) \colon \O_G^{\nameop} \to \W\mathrm{Cat}_\infty 
    \]
    which sends a coset $G/H$ to the pair $(\Mod(kH),\W_{H})$, and any equivariant map $G/H\xrightarrow[]{g} G/H' $ to the Quillen functor corresponding to the restriction functor along the conjugation map $c^\ast_g$. Now, composing with the functor from \cref{eq-localization} we obtain a functor 
     \[
    \mathbf{StMod}(k-) \colon \O_G^{\nameop} \to \mathrm{Cat}_\infty 
    \] 
    which maps a coset $G/H$ to the proper stable module $\infty$-category $\mathbf{StMod}(kH)=\Mod(kH)[\W_{H}^{-1}]$.  Finally,  by \cref{prop-stmod-is-sht} and \cref{coro-functors-propstable},  we deduce that the functor $\mathbf{StMod}(k-)$ actually takes values in $\CAlg(\PrL)$. 
\end{construction}

\begin{notation}\label{notation-orb}
    Let $\C_\bullet\colon \O_G^{\nameop}\to \CAlg(\PrL)$ be a functor. Then it admits a unique limit-preserving lift to a functor 
    \[
   \C_\bullet\colon \S_{G}^{\nameop} \to \CAlg(\PrL)
    \]
    where $\S_G$ denotes the presheaf category on $\O_G$; that is, the $\infty$-category of functors from $\O_G^{\mathrm{op}}$ to the $\infty$-category of $\infty$-groupoids. See \cite[Remark 5.3.5.9]{Lur09}.  Note that we keep the same notation for the functor $\C_\bullet$ and its extension. 
\end{notation}


\begin{proposition}
      Let $G$ be a group and $k$ be a commutative ring. Let 
      \[
      \mathbf{StMod}(k-)\colon \O_G^{\nameop} \to \CAlg(\PrL) 
      \]
      be the functor from \cref{const-stmod-functor}. Then the following properties hold.
    \begin{enumerate}
        \item For any $G$-equivariant map $f \colon G/K \to G/H$, the corresponding functor $f^* \colon \mathbf{StMod}(kH) \to \mathbf{StMod}(kK)$ admits a left adjoint $f_!$. When $f$ is induced by an inclusion we will write $f^*=\res^H_K$ and $f_!=\ind_K^H$.
        \item For any pullback square in $\S_{G}$
    \[
    \begin{tikzcd}
        X \arrow[r] \arrow[d] & G/L\arrow[d] \\
        G/H \arrow[r] & G/K
    \end{tikzcd}
    \]
  where we identify elements in $\O_G$ with elements in $\S_G$ via the Yoneda embedding, the corresponding diagram 
    \[
    \begin{tikzcd}
        \mathbf{StMod}(kK)\arrow[r] \arrow[d] & \mathbf{StMod}(kH)\arrow[d] \\
        \mathbf{StMod}(kL) \arrow[r] & \mathbf{StMod}(kX)
    \end{tikzcd}
    \]
    is horizontally left adjointable; see \cite[Definition 4.7.4.13]{Lur17}. 
    \item The collection of functors $(\res^G_F)_{F \in  \Fin(G)}$ is jointly conservative. 
    \end{enumerate}
\end{proposition}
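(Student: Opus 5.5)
Parts (a) and (c) follow quickly from earlier results, and part (b) is the real content.

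For (a), every $G$-equivariant map $f\colon G/K\to G/H$ is of the form $gK\mapsto gxH$ with $x^{-1}Kx\leqslant H$. Under the identification of \cref{rem-functor-mod}, $f^\ast$ is the composite of restriction along the monomorphism $x^{-1}Kx\hookrightarrow H$ with the conjugation equivalence $c_x^\ast$. So $f^\ast=\res_i$ for a group monomorphism $i\colon K\to H$. By \cref{lemma-Quillen-adj}, $(\ind_i,\res_i)$ is a Quillen adjunction, and by \cref{coro-functors-propstable} it induces an adjunction $\ind_i\dashv\res_i$ of $\infty$-categories. We set $f_!=\ind_i$.

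For (c), note that the homotopy category of $\mathbf{StMod}(kG)$ is $\mathrm{StMod}^{\mathrm{prop}}_{kG}$. Equivalences in a stable $\infty$-category are detected on the homotopy category, and a map is an equivalence iff its cofiber vanishes. Both restriction and cofibers are compatible with passage to homotopy categories, since restriction is exact. Hence (c) follows directly from \cref{coro-detection}.

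For (b), I first reduce to a single orbit. The pullback $X=G/H\times_{G/K}G/L$ in $\S_G$ decomposes as a coproduct of orbits indexed by the double cosets $H\backslash K/L$:
\[
X\simeq\coprod_{g\in H\backslash K/L} G/(H\cap {}^g\!L).
\]
Since the extension of \cref{notation-orb} is limit preserving, $\mathbf{StMod}(kX)\simeq\prod_g \mathbf{StMod}(k(H\cap{}^g\!L))$. The left adjoint of the map to this product is the sum of the individual left adjoints, and finite products agree with finite coproducts in $\PrL$. Horizontal left adjointability then asks that the Beck--Chevalley transformation
\[
\bigoplus_{g\in H\backslash K/L}\ind^H_{H\cap{}^g\!L}\,\res^{{}^g\!L}_{H\cap{}^g\!L}\,c_g^\ast\longrightarrow \res^K_H\ind^K_L
\]
be an equivalence of functors $\mathbf{StMod}(kL)\to\mathbf{StMod}(kH)$. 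To check this, I would lift the map to the level of model categories, where it is the classical Mackey isomorphism recalled in the preliminaries. All functors involved (induction, restriction, conjugation) are exact and preserve both $\mathrm{PCof}$ and weak equivalences, by \cref{lemma-Quillen-adj} and Ken Brown's lemma. So their derived functors are computed by the underlying functors, and the natural isomorphism of the Mackey formula descends to an equivalence of derived functors.

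The main obstacle is identifying the abstract Beck--Chevalley mate in $\CAlg(\PrL)$ with the map induced by the Mackey isomorphism. The mate is built from the unit and counit of the derived adjunctions, so I would check that the classical Mackey map is exactly the mate formed from the $1$-categorical unit and counit of $\ind\dashv\res$. I would then invoke the compatibility of localization with Quillen adjunctions (\cite{Maz16}) to transfer units and counits to the $\infty$-categorical level.
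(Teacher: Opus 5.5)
Your proposal is correct and follows the paper's own (very brief) proof: (a) comes from the Quillen adjunction of \cref{lemma-Quillen-adj} via \cref{coro-functors-propstable}, (b) from the Mackey double coset isomorphism at the level of module categories, and (c) from \cref{coro-detection}, since conservativity is detected on homotopy categories; your orbit decomposition of $X$ and your identification of the mate with the Mackey map just spell out what the paper leaves implicit. The only slips are harmless: the transformation you wrote uses the vertical left adjoints rather than the horizontal ones (fine, since the transpose of a pullback square is again a pullback square), the index set $H\backslash K/L$ can be infinite (fine, since the left adjoint only needs coproducts in $\mathbf{StMod}(kH)$), and Ken Brown only gives that $\ind$ preserves weak equivalences between proper cofibrant objects (which suffices, since $\mathbf{StMod}(kH)$ is the localization of $\mathrm{PCof}(kH)$).
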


\begin{proof}
    Part (a) follows from \cref{coro-functors-propstable}. Part (b) corresponds to the double coset formula: since this holds at the abelian categorical level, we obtain the corresponding result for proper stable module $\infty$-categories. Part (c) is a consequence of \cref{coro-detection} since this property is detected at the level of homotopy categories. 
\end{proof}

As an immediate consequence of \cite[Theorem 7.4]{GP26} combined with the previous proposition, we obtain the following result:  

\begin{corollary}\label{coro-stmod-descent}
    Let $G$ be a group and $k$ be a commutative ring. Then the restriction functors induce an equivalence of symmetric monoidal stable $\infty$-categories 
    \[
    \mathbf{StMod}(kG)\xrightarrow[]{\simeq} \lim_{G/F\in \Ofin^{\nameop}}\mathbf{StMod}(kF).
    \]
\end{corollary}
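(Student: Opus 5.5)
The plan is to apply \cite[Theorem 7.4]{GP26} directly to the functor $\mathbf{StMod}(k-)\colon \O_G^{\nameop}\to\CAlg(\PrL)$ of \cref{const-stmod-functor}, restricted along $\iota\colon\Ofin\to\O_G$. As I understand it, that theorem says the following. Suppose a functor $\C_\bullet\colon\O_G^{\nameop}\to\CAlg(\PrL)$ satisfies three conditions: its restriction functors admit left adjoints, it satisfies the Beck--Chevalley (horizontal left adjointability) condition for pullback squares in $\S_G$, and the restrictions to a family $\F$ are jointly conservative. Then the canonical comparison $\C_G\to\lim_{G/F\in\O_{G,\F}^{\nameop}}\C_F$ is an equivalence in $\CAlg(\PrL)$. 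The previous proposition verifies exactly these three hypotheses for $\F=\Fin(G)$, with parts (a), (b) and (c) respectively. So the proof amounts to matching hypotheses and identifying the comparison map with the one induced by the restriction functors.

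The steps are as follows. First, note that $\Fin(G)$ is a family: it is closed under conjugation and subgroups. Hence $\Ofin$ is a full subcategory of $\O_G$ of the type considered in \cite{GP26}. Second, identify the canonical map. The cone on $\mathbf{StMod}(k-)|_{\Ofin^{\nameop}}$ with vertex $\mathbf{StMod}(kG)=\mathbf{StMod}(k-)(G/G)$ is given by applying the functor to the maps $G/F\to G/G$. By \cref{const-stmod-functor} these are the restriction functors $\res^G_F$, which are strong monoidal by \cref{coro-functors-propstable}. So the induced functor to the limit is a morphism in $\CAlg(\PrL)$, and it is exactly the functor in the statement. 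Third, invoke \cite[Theorem 7.4]{GP26} to conclude that this map is an equivalence. Limits in $\CAlg(\PrL)$ are computed underlying in $\mathrm{Cat}_\infty$, so the result is an equivalence of symmetric monoidal stable $\infty$-categories.

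The main obstacle is the Beck--Chevalley condition, part (b) of the previous proposition, which was justified only briefly there. If more detail is needed, I would argue as follows. A pullback of orbits in $\S_G$ decomposes as $X\simeq\coprod_{g\in H\backslash K/L}G/(H\cap{}^gL)$, and $\mathbf{StMod}(kX)$ is the corresponding product. The mate transformation is then the derived version of the Mackey formula isomorphism $\res^K_H\ind^K_L\cong\bigoplus_g\ind_{H\cap{}^gL}^H\res\,c_g^\ast$, which holds on abelian categories. All the functors involved are exact and preserve the relevant weak equivalences and cofibrant objects (by \cref{lemma-Quillen-adj}), so they need no deriving. The mate is therefore computed by the underived Mackey isomorphism, and hence it is an equivalence.
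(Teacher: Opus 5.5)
Your proposal is correct and follows the paper's argument, which simply invokes \cite[Theorem 7.4]{GP26} with its hypotheses supplied by parts (a), (b), (c) of the preceding proposition; your unpacking of (b) via the Mackey formula fleshes out the paper's one-line appeal to the double coset formula. One small precision there: \cref{lemma-Quillen-adj} together with Ken Brown's lemma only guarantees that induction preserves weak equivalences between proper cofibrant modules, so you should evaluate the mate on cofibrant objects, which costs nothing because restriction and conjugation preserve proper cofibrant modules.
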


\section{Proper cofibrants and Benson cofibrants}
\label{subsec:comparison} 


We now compare the proper cofibrant $kG$--modules $\mathrm{PCof}(kG)$ with the Benson cofibrant $kG$--modules $\mathrm{Cof}(kG)$.
We recall that $\mathrm{PCof}(kG)\subseteq \mathrm{Cof}(kG)$; see \cref{rem-Wfin-contains-cof}. The next result is akin to \cite[Proposition~4.3]{ET_cof}.

\begin{proposition}
\label{prop:equality of classes}
The following are equivalent:
\begin{itemize}
\item[(i)] $\mathrm{Cof}(kG)=\mathrm{PCof}(kG)$
\item[(ii)] $\mathrm{Cof}(kG)\cap\Wfin = \mathrm{Proj}(kG)$.
\end{itemize}
\end{proposition}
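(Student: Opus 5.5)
For (i)$\Rightarrow$(ii), I would use the proof of \cref{thm-proper-coto}, which shows that $\mathrm{PCof}(kG)\cap\Wfin=\mathrm{Proj}(kG)$. If $\mathrm{Cof}(kG)=\mathrm{PCof}(kG)$, then $\mathrm{Cof}(kG)\cap\Wfin=\mathrm{PCof}(kG)\cap\Wfin=\mathrm{Proj}(kG)$, and nothing more is needed.

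For (ii)$\Rightarrow$(i), we already have $\mathrm{PCof}(kG)\subseteq\mathrm{Cof}(kG)$ by \cref{rem-Wfin-contains-cof}, so it remains to prove the reverse inclusion. Let $c\in\mathrm{Cof}(kG)$. By completeness of the cotorsion pair $(\mathrm{PCof}(kG),\Wfin)$ (\cref{coro-wfin-complete}), there is a short exact sequence
\[
0\to w\to p\to c\to 0
\]
with $p\in\mathrm{PCof}(kG)$ and $w\in\Wfin$.

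Since $p\in\mathrm{PCof}(kG)\subseteq\mathrm{Cof}(kG)$, and $\mathrm{Cof}(kG)$ is closed under kernels of epimorphisms (\cref{rem:properties of cofs}), the kernel $w$ lies in $\mathrm{Cof}(kG)\cap\Wfin$. By (ii), $w$ is projective.

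The key step is to show that this sequence splits. For that we need $\mathrm{Ext}^1_{kG}(c,w)=0$ for a Benson cofibrant $c$ and a projective $w$. Projectives lie in $\mathrm{Cof}(kG)$ and in $\Wfin$, and by \cref{rem-Wfin-contains-cof} they are also in $\mathrm{Cof}(kG)^\perp$: indeed $\mathrm{Cof}(kG)\cap\mathrm{Cof}(kG)^\perp=\mathrm{Proj}(kG)$, since \cref{prop-model-struc} is a Hovey triple with all objects fibrant. Hence $\mathrm{Ext}^1_{kG}(c,w)=0$ and the sequence splits. Then $c$ is a direct summand of $p$, so $c\in\mathrm{PCof}(kG)$, as the left class of a cotorsion pair is closed under summands.

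The only point needing care is exactly this Ext-vanishing against projectives, and it follows from projectives being the projective-injective objects of the Frobenius category $\mathrm{Cof}(kG)$ (\cref{prop:her_cat}).
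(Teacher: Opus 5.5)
Your proof is correct and matches the paper's argument essentially step for step. For (i)$\Rightarrow$(ii) you use $\mathrm{PCof}(kG)\cap\Wfin=\mathrm{Proj}(kG)$; for (ii)$\Rightarrow$(i) you take a $\mathrm{PCof}(kG)$-approximation $0\to w\to p\to c\to 0$, note that its kernel lies in $\mathrm{Cof}(kG)\cap\Wfin$ and is therefore projective, hence lies in $\mathrm{Cof}(kG)^\perp$, so the sequence splits. One small slip: citing \cref{rem-Wfin-contains-cof} for $\mathrm{Proj}(kG)\subseteq\mathrm{Cof}(kG)^\perp$ is misplaced, but your ``indeed'' justification via $\mathrm{Cof}(kG)\cap\mathrm{Cof}(kG)^\perp=\mathrm{Proj}(kG)$ is the right one.
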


\begin{proof}
$(i)\Rightarrow(ii)$ Observe that $\mathrm{Cof}(kG)\cap\Wfin={}^\perp\Wfin\cap\Wfin = \mathrm{Proj}(kG)$.

$(ii)\Rightarrow(i)$ Let $x\in\mathrm{Cof}(kG)$. Part of \cref{thm-proper-coto} implies that there exists a short exact sequence $0\rightarrow z\rightarrow t\rightarrow x\rightarrow 0$ where $t\in \mathrm{PCof}(kG)$ and $z\in\Wfin$. In particular, $t\in\mathrm{Cof}(kG)$, thus $z\in\mathrm{Cof}(kG)$ too since Benson cofibrants are closed under kernels of epimorphisms; see \cref{rem:properties of cofs}. But then $z\in\mathrm{Proj}(kG)$ by assumption. In particular $z\in\mathrm{Cof}(kG)^{\perp}$; thus $x$ is a summand of $t$. Since $\mathrm{PCof}(kG)$ is closed under summands, the result follows.
\end{proof}

\begin{definition}
A $kG$--module $x$ is \textit{locally projective} if $\res^G_F(x)$ is a projective $kF$--module for all finite subgroups $F\leqslant G$. We denote by $\mathcal{F}\mathrm{Proj}(kG)$ the class of locally projective modules.    
\end{definition}

\begin{remark}
Since the restriction of a cofibrant $kG$-module along a finite subgroup is cofibrant (\cref{prop:cof_ind_kes}) and by definition of $\Wfin$, we deduce that 
\begin{equation}
\label{eq:equality with fproj}
\mathrm{Cof}(kG)\cap\Wfin=\mathrm{Cof}(kG)\cap\mathcal{F}\mathrm{Proj}(kG).
\end{equation}
Hence, \cref{prop:equality of classes} tells us that $\mathrm{Cof}(kG)=\mathrm{PCof}(kG)$ if and only if all the cofibrant and locally projective $kG$--modules are projective.
\end{remark}

\begin{remark}
\cref{prop:equality of classes} hints at a connection between the stable categories of $\mathrm{PCof}(kG)$ and $\Cof(kG)$ that we now explore. Firstly, notice that both categories admit the canonical exact structure induced from the  category of all $kG$--modules, hence  $\mathrm{PCof(kG)}$ is an extension-closed subcategory of $\Cof(kG)$. Both are Frobenius exact categories with projective-injective objects given by the projective $kG$--modules. It follows that the inclusion $\mathrm{PCof}(kG)\subseteq \Cof(kG)$ descends to a canonical fully faithfull exact functor 
    \begin{equation}
    \label{eq:exact_functor}
   j\colon  \mathsf{St}\, \mathrm{PCof}(kG)\to \mathsf{St}\,\mathrm{Cof}(kG).
\end{equation}
\end{remark}

We claim that the canonical functor in \eqref{eq:exact_functor} is part of a localization sequence of triangulated categories. For this we will use the following facts concenring Bousfield localizations, in the context of Hovey triples.

\begin{recollection}\textnormal{(\cite[Proposition~1.4.6]{Bec14})}
\label{recoll:Bousfield}
Given a complete and cocomplete abelian category $\mathcal{M}$ admitting two hereditary Hovey triples
$\mathcal{M}_1=(\C_1,\W_1,\mathcal{M})$ and $\mathcal{M}_2=(\C_2,\W_2,\mathcal{M})$ such that $\C_2\subseteq \C_1$, there exists a new hereditary Hovey triple on $\mathcal{M}$ given by 
$\mathcal{M}_2\setminus \mathcal{M}_1\coloneqq(\C_1,\mathcal{T},\W_2)$ where 
\begin{eqnarray}
\T & \coloneqq & \{M\in\mathcal{M}\, |\, \exists\,\,\, \mbox{ex. seq.}\,\,\,  0\rightarrow M\rightarrow A\rightarrow B\rightarrow 0\,\,\, \mbox{where}\,\,\,  A\in\W_1,\,  B\in\C_2\} \nonumber \\
 & = &  \{M\in\mathcal{M}\, |\, \exists\,\,\, \mbox{ex. seq.}\,\,\,  0\rightarrow A\rightarrow B\rightarrow M\rightarrow 0\,\,\, \mbox{where}\,\,\,  A\in\W_1,\,  B\in\C_2\}. \nonumber
\end{eqnarray}
The Hovey triple $\mathcal{M}_2\setminus \mathcal{M}_1$ is the \textit{left (Bousfield) localization} of $\mathcal{M}_1$ with respect to $\mathcal{M}_2$. In this situation, there exists a localization sequence of triangulated categories:
\begin{equation}
\label{eq:Becker_loc}
\xymatrix@C=3pc{
\mathbf{Ho}(\mathcal{M}_2) \ar[r]^-{\mathbf{L}(\mathrm{id})} & \mathbf{Ho}(\mathcal{M}_1) \ar[r]^-{\mathbf{L}(\mathrm{id})} & \mathbf{Ho}(\mathcal{M}_2\setminus \mathcal{M}_1); 
}
\end{equation}
\end{recollection}

\begin{theorem}
\label{thm:comparison}
There exists a localization sequence of triangulated categories
\begin{equation}
\label{eq:loc_seq}
\xymatrix@C=3pc{
\mathsf{St}\,\mathrm{PCof}(kG)  \ar[r]^-{j} & \mathsf{St}\,\mathrm{Cof}(kG) \ar[r]^-{\pi} & \mathsf{St}({\mathrm{Cof}(kG)\cap\mathcal{F}\mathrm{Proj}}(kG)). 
}
\end{equation}
In particular, the canonical fully faithful functor $j\colon \mathsf{St}\,\mathrm{PCof}(kG)\rightarrow \mathsf{St}\,\mathrm{Cof}(kG)$  is an equivalence of triangulated categories if and only if all cofibrant and locally projective $kG$--modules are projective.
\end{theorem}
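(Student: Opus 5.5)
We apply \cref{recoll:Bousfield} on $\mathcal{M}=\Mod(kG)$ with $\mathcal{M}_1=(\mathrm{Cof}(kG),\mathrm{Cof}(kG)^\perp,\Mod(kG))$ from \cref{prop-model-struc} and $\mathcal{M}_2=(\mathrm{PCof}(kG),\Wfin,\Mod(kG))$ from \cref{thm-proper-coto}. Both are hereditary Hovey triples with all objects fibrant, and $\C_2=\mathrm{PCof}(kG)\subseteq\mathrm{Cof}(kG)=\C_1$ by \cref{rem-Wfin-contains-cof}. By \cref{prop:her_cat}, $\mathbf{Ho}(\mathcal{M}_2)\simeq\mathsf{St}\,\mathrm{PCof}(kG)$ and $\mathbf{Ho}(\mathcal{M}_1)\simeq\mathsf{St}\,\mathrm{Cof}(kG)$. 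The left derived functor of the identity is computed on cofibrant objects of $\mathcal{M}_2$, so under these identifications the first functor in \eqref{eq:Becker_loc} is $j$ from \eqref{eq:exact_functor}.

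It remains to identify $\mathbf{Ho}(\mathcal{M}_2\setminus\mathcal{M}_1)$, the homotopy category of the triple $(\mathrm{Cof}(kG),\T,\Wfin)$. By \cref{prop:her_cat} it is the stable category of the Frobenius category $\mathrm{Cof}(kG)\cap\Wfin$, and by \eqref{eq:equality with fproj} this equals $\mathrm{Cof}(kG)\cap\mathcal{F}\mathrm{Proj}(kG)$. \cref{prop:her_cat} gives its projective-injectives as $\mathrm{Cof}(kG)\cap\T\cap\Wfin$. We will check that these are exactly $\mathrm{Proj}(kG)$, so that the notation $\mathsf{St}$ matches. Since $(\mathrm{Cof}(kG)\cap\T,\Wfin)$ is a cotorsion pair of the Hovey triple, we have $\mathrm{Cof}(kG)\cap\T={}^\perp\Wfin=\mathrm{PCof}(kG)$. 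Hence the intersection is $\mathrm{PCof}(kG)\cap\Wfin=\mathrm{Proj}(kG)$, as shown in the proof of \cref{thm-proper-coto}. The second functor $\pi$ is then the canonical quotient induced by the identity on $\mathrm{Cof}(kG)$, using fibrant replacement in the new triple. This gives \eqref{eq:loc_seq}.

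For the final claim, $j$ is fully faithful and the sequence is a localization sequence, so $j$ is an equivalence iff the Verdier quotient $\mathsf{St}(\mathrm{Cof}(kG)\cap\mathcal{F}\mathrm{Proj}(kG))$ vanishes. That happens iff every object of $\mathrm{Cof}(kG)\cap\mathcal{F}\mathrm{Proj}(kG)$ is projective-injective, i.e.\ lies in $\mathrm{Proj}(kG)$. One could alternatively go through \cref{prop:equality of classes}: essential surjectivity of $j$ implies every cofibrant module is stably isomorphic, hence equal up to projective summands, to a proper cofibrant one, and then $\mathrm{Cof}=\mathrm{PCof}$.

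The main obstacle is the careful identification of the three homotopy categories with the displayed stable categories, and checking that the functors in \eqref{eq:Becker_loc} coincide with $j$ and the natural $\pi$. The key inputs there are the projective-injective computation and the fact that $\mathbf{L}(\mathrm{id})$ needs no replacement on cofibrant objects. Everything else is bookkeeping from the cited recollections.
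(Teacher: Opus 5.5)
Your proposal is correct and follows the paper's proof. Like the paper, you apply Becker's left Bousfield localization (\cref{recoll:Bousfield}) to the Benson-cofibrant triple $\mathcal{M}_1$ with respect to the proper triple $\mathcal{M}_2$, and you identify the homotopy category of $(\mathrm{Cof}(kG),\mathcal{T},\mathcal{W}_G)$ with the stable category of $\mathrm{Cof}(kG)\cap\mathcal{W}_G=\mathrm{Cof}(kG)\cap\mathcal{F}\mathrm{Proj}(kG)$. Your additional checks make explicit what the paper leaves to ``it remains to observe'': that the projective-injectives are exactly $\mathrm{Proj}(kG)$ because $\mathrm{Cof}(kG)\cap\mathcal{T}={}^\perp\mathcal{W}_G=\mathrm{PCof}(kG)$, that the derived identities are $j$ and the natural $\pi$, and that $j$ is an equivalence exactly when the quotient vanishes.
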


\begin{proof}
Consider the hereditary Hovey triples $\mathcal{M}_1=(\Cof(kG),\Cof(kG)^\perp,\Mod(kG))$ and $\mathcal{M}_2=(\mathrm{PCof}(kG),\mathrm{PCof}(kG)^{\bot},\Mod(kG))$ from \cref{prop-model-struc} and  \cref{thm-proper-coto} respectively. Recall from \cref{rem-Wfin-contains-cof} that $\mathrm{PCof}(kG)\subseteq \mathrm{Cof}(kG)$ and notice that the two model structures have the same class of fibrant objects, which are all $kG$--modules. Hence, we may apply the results of  \cref{recoll:Bousfield} and obtain a hereditary Hovey triple $\mathcal{M}_2\setminus \mathcal{M}_1=(\mathrm{Cof}(kG),\mathcal{T},\Wfin)$ for a certain class $\mathcal{T}$. The homotopy category of this abelian model structure is a triangulated category equivalent to the stable category of the Frobenius category $\mathrm{Cof}(kG)\cap\Wfin$. The latter category is identified with $\mathrm{Cof}(kG)\cap\mathcal{F}\mathrm{Proj(kG)}$ by \eqref{eq:equality with fproj}. 
Lastly, it remains to observe that the localization sequence \eqref{eq:Becker_loc} restricts to  \eqref{eq:loc_seq}.
\end{proof}


\begin{corollary}\label{cor-comparison}
The  family of exact functors 
    \[
    (\res^G_F\colon \mathsf{St}\Cof(kG)\to \mathsf{St}\Cof(kF))_{F\in \Fin(G)}
    \]
    is jointly-conservative if and only if the canonical functor 
    \[
\mathsf{St}\,\mathrm{PCof}(kG)\to \mathsf{St}\,\mathrm{Cof}(kG))
    \]
    is an equivalence of triangulated categories. 
\end{corollary}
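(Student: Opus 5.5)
We prove the two implications separately. By \cref{thm:comparison}, $j$ is an equivalence if and only if every module in $\mathrm{Cof}(kG)\cap\mathcal{F}\mathrm{Proj}(kG)$ is projective. By \eqref{eq:equality with fproj} and \cref{prop:equality of classes}, this holds if and only if $\mathrm{Cof}(kG)=\mathrm{PCof}(kG)$. So it suffices to show that joint conservativity of the restrictions on $\mathsf{St}\,\mathrm{Cof}$ is equivalent to the statement that cofibrant, locally projective $kG$--modules are projective.

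First we make sense of the functors. By \cref{prop:cof_ind_kes}(i), $\res^G_F$ sends $\mathrm{Cof}(kG)$ into $\mathrm{Cof}(kF)$ for $F$ finite. It is exact and preserves projectives, so it induces an exact functor $\mathsf{St}\,\mathrm{Cof}(kG)\to\mathsf{St}\,\mathrm{Cof}(kF)$. In the Frobenius category $\mathrm{Cof}(kG)$, whose projective-injectives are $\mathrm{Proj}(kG)$ by \cref{prop-model-struc}, an object $x$ is zero in the stable category if and only if $x$ is projective. The same holds for $\mathrm{Cof}(kF)$.

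For ($\Leftarrow$): assume $j$ is an equivalence, so every cofibrant locally projective module is projective. Let $x\in\mathrm{Cof}(kG)$ with $\res^G_F x\simeq0$ for all $F\in\Fin(G)$. Then each $\res^G_F x$ is $kF$--projective, so $x\in\mathrm{Cof}(kG)\cap\mathcal{F}\mathrm{Proj}(kG)$. Hence $x$ is projective, i.e.\ $x\simeq 0$. Joint conservativity on objects is enough, since the functors are triangulated: a morphism is invertible exactly when its cone vanishes.

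For ($\Rightarrow$): assume the family is jointly conservative and let $x\in\mathrm{Cof}(kG)\cap\mathcal{F}\mathrm{Proj}(kG)$. Every restriction $\res^G_F x$ is projective, hence zero in $\mathsf{St}\,\mathrm{Cof}(kF)$. Therefore $x\simeq0$ in $\mathsf{St}\,\mathrm{Cof}(kG)$, that is, $x$ is projective. By \cref{thm:comparison}, $j$ is an equivalence.

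There is no genuine obstacle here. The only point needing care is the identification of zero objects in the stable categories with projective modules, which follows from the Frobenius structure established in \cref{prop-model-struc}.
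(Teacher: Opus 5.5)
Your proof is correct and follows essentially the same route as the paper: you identify the zero objects of $\mathsf{St}\,\mathrm{Cof}$ with the projective modules, so joint conservativity becomes the condition that cofibrant, locally projective modules are projective, and you then invoke \cref{thm:comparison}. The detour through \cref{prop:equality of classes} and the cone argument reducing conservativity on morphisms to conservativity on objects add detail but do not change the argument.
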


\begin{proof}
First, notice that for any finite subgroup $F\leqslant  G$ the restriction functor $\res^G_F\colon \mathsf{St}\Cof(kG)\to \mathsf{St}\Cof(kF)$ is well-defined by  \cref{prop:cof_proj} and the fact that it sends projective $kG$--modules to projective $kF$--modules. 
The displayed family of exact functors is jointly conservative if, by definition,  any cofibrant $kG$--module $x$ satisfies $x\cong 0$ if and only if $\res^G_F(x)\cong 0$ for all finite subgroups $F\leqslant G$. Equivalently, this condition asks that a cofibrant $kG$--module $x$ is projective if and only if it is locally projective. Hence the result follows by \cref{thm:comparison}.
\end{proof}



\subsection{Hierarchically defined groups}\label{subsec-hiera}

Our aim in this subsection is to investigate the proper stable module category for special classes of group algebras. In particular, in \cref{cor-ms-stable-agrees} we show that the Mazza-Symonds stable category from \cite{MS19} is an instance of our proper stable module category.

 \begin{recollection}\label{rec-krop}
     Let $\mathcal{X}$ be a class of groups. Following Kropholler \cite{Krop} the class of groups $\mathrm{H}\mathcal{X}$ is defined as follows: $\mathrm{H}_0\mathcal{X}\coloneqq \mathcal{X}$ and for all ordinals $\alpha > 0$, a group is contained in $\mathrm{H}_{\alpha}\mathcal{X}$ if it acts cellularly on a finite dimensional contractible CW-complex such that the stabliser of each cell belongs in $\mathrm{H}_{\beta}\mathcal{X}$ for some $\beta < \alpha$. 
     
Then, we say that a group belongs in the class $\mathrm{H}\mathcal{X}$ if it belongs to $\mathrm{H}_{\alpha}\mathcal{X}$, for some ordinal $\alpha$. Finally, $\mathrm{LH}\mathcal{X}$ denotes the class of groups whose finitely generated subgroups belong in various classes of the form $\mathrm{H}_{\alpha}\mathcal{X}$.
 \end{recollection}

In particular, we are interested in the case where $\mathcal{X}$ is either the class of finite groups or the class of groups of type $\Phi_k$. The latter was introduced in \cite{Tal}. For convenience, we recall the definition of this class of groups. 

\begin{definition}
\label{def:phi}
    Let $k$ be a commutative ring. A group $G$ is of type $\Phi_k$ if for any $kG$--module $x$ the following property holds: 
    \[
    \mathrm{pd}_{kG}x<\infty \iff \forall F\in \Fin(G);\,\, \mathrm{pd}_{kF}\res^G_F x  <\infty. 
    \]
Here $\Fin(G)$ denotes the collection of all finite subgroups of $G$.
\end{definition}

We also recall that a $kG$--module $x$ is called \textit{Gorenstein projective} if it is a syzygy in an acyclic complex $P$ of projective $kG$--modules and for any projective module $kG$--module $y$ the complex $\Hom_{kG}(P,y)$ is acyclic too. 

It is known by \cite{CP} that $\mathrm{Cof}(kG)\subseteq \mathrm{GProj}(kG)$ and it is an active research question to understand for which group algebras the two classes are equal.  For instance, it was recently proved that  $\mathrm{Cof}(kG)=\mathrm{GProj}(kG)$ if $k$ has finite weak global dimension and $G$ is of type $\Phi_k$ or an $\mathrm{LH}\mathcal{F}$ group, see \cite[Corollary 2.5]{ET_cof}. 

To compare $\mathrm{GProj}(kG)$ with $\mathrm{PCof}(kG)$ we have the following result, which is analogous to \cref{prop:equality of classes}.

\begin{proposition}
\label{prop:equality of G-classes}
For a commutative ring $k$ and a group $G$ the following are equivalent:
\begin{itemize}
\item[(i)] $\mathrm{GProj}(kG)=\mathrm{PCof}(kG)$
\item[(ii)] $\mathrm{GProj}(kG)\cap\Wfin = \mathrm{Proj}(kG)$.
\end{itemize}
\end{proposition}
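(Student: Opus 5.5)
We mimic the proof of \cref{prop:equality of classes}, replacing $\mathrm{Cof}(kG)$ by $\mathrm{GProj}(kG)$. The needed ingredients are: $\mathrm{PCof}(kG)\subseteq\mathrm{Cof}(kG)\subseteq\mathrm{GProj}(kG)$, by \cref{rem-Wfin-contains-cof} and the result of Cornick--Kropholler \cite{CP}; the class $\mathrm{GProj}(kG)$ is closed under kernels of epimorphisms between its members and under direct summands; and projective modules lie in $\mathrm{GProj}(kG)^{\perp}$, since $\mathrm{Ext}^i_{kG}(x,p)=0$ for $i\geqslant 1$, $x$ Gorenstein projective and $p$ projective. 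These are standard facts about Gorenstein projectives over an arbitrary ring; the last follows directly from the definition by dimension shifting along the totally acyclic complex.

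For $(i)\Rightarrow(ii)$, we compute $\mathrm{GProj}(kG)\cap\Wfin=\mathrm{PCof}(kG)\cap\Wfin={}^\perp\Wfin\cap\Wfin=\mathrm{Proj}(kG)$, where the last equality was established in the proof of \cref{thm-proper-coto}.

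For $(ii)\Rightarrow(i)$, the inclusion $\mathrm{PCof}(kG)\subseteq\mathrm{GProj}(kG)$ is already known. Conversely, let $x\in\mathrm{GProj}(kG)$. Completeness of the cotorsion pair $(\mathrm{PCof}(kG),\Wfin)$ (\cref{coro-wfin-complete}) gives a short exact sequence $0\to z\to t\to x\to 0$ with $t\in\mathrm{PCof}(kG)$ and $z\in\Wfin$. Since $t$ and $x$ are Gorenstein projective, so is $z$ by closure under kernels of epimorphisms. Hence $z\in\mathrm{GProj}(kG)\cap\Wfin=\mathrm{Proj}(kG)$. Because $\mathrm{Ext}^1_{kG}(x,z)=0$ for Gorenstein projective $x$ and projective $z$, the sequence splits, so $x$ is a summand of $t$. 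Finally, $\mathrm{PCof}(kG)$ is a left orthogonal class and therefore closed under summands, which gives $x\in\mathrm{PCof}(kG)$.

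The only nontrivial input is the closure of $\mathrm{GProj}(kG)$ under kernels of epimorphisms over an arbitrary ring $kG$. This is Holm's theorem that $\mathrm{GProj}$ is resolving, which we would cite rather than reprove; it is the point where the argument departs from the Benson-cofibrant case, which used \cref{rem:properties of cofs} instead.
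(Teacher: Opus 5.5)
Your proof is correct. For (i)$\Rightarrow$(ii) it agrees with the paper up to phrasing: the paper writes $\mathrm{GProj}(kG)^{\perp}=\Wfin$ and then uses $\mathrm{GProj}(kG)\cap\mathrm{GProj}(kG)^{\perp}=\mathrm{Proj}(kG)$.

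For (ii)$\Rightarrow$(i) you take a genuinely more direct route. You run the argument of \cref{prop:equality of classes} a single time with $\mathrm{GProj}(kG)$ in place of $\mathrm{Cof}(kG)$, using the $(\mathrm{PCof}(kG),\Wfin)$-approximation of $x$ and Holm's theorem that $\mathrm{GProj}(kG)$ is resolving.

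The paper instead factors the statement through $\mathrm{Cof}(kG)$ in two steps:
\begin{itemize}
\item Since $\mathrm{Cof}(kG)^{\perp}\subseteq\Wfin$, hypothesis (ii) gives $\mathrm{GProj}(kG)\cap\mathrm{Cof}(kG)^{\perp}=\mathrm{Proj}(kG)$, so \cite[Proposition~4.3]{ET_cof} yields $\mathrm{GProj}(kG)=\mathrm{Cof}(kG)$.
\item Hypothesis (ii) then reads $\mathrm{Cof}(kG)\cap\Wfin=\mathrm{Proj}(kG)$, and \cref{prop:equality of classes} gives $\mathrm{Cof}(kG)=\mathrm{PCof}(kG)$.
\end{itemize}

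The paper's version is shorter on the page and displays the intermediate equality $\mathrm{GProj}(kG)=\mathrm{Cof}(kG)$ explicitly. The cost is an external citation of a result of the same shape as \cref{prop:equality of classes}. Your version is self-contained apart from Holm's standard theorem, and it loses nothing: the intermediate equality follows anyway from the sandwich $\mathrm{PCof}(kG)\subseteq\mathrm{Cof}(kG)\subseteq\mathrm{GProj}(kG)=\mathrm{PCof}(kG)$.
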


\begin{proof}
$(i)\Rightarrow(ii)$ The assumption implies in particular that $\mathrm{GProj}(kG)^{\bot}=\mathcal{W}_G$. Since $\mathrm{GProj}(kG)\cap \mathrm{GProj}(kG)^{\bot}=\mathrm{Proj}(kG)$ the result follows.

$(ii)\Rightarrow(i)$ Since $\mathrm{Cof}(kG)^{\perp}\subseteq \Wfin$, by combining the assumption together with \cite[Proposition 4.3]{ET_cof} we deduce  that $\mathrm{GProj}(kG)=\mathrm{Cof}(kG)$. Thus by \cref{prop:equality of classes} we deduce that $\mathrm{Cof}(kG)=\mathrm{PCof}(kG)$, which concludes the proof.
\end{proof}

\begin{corollary}
\label{cor:when all classes agree}
Let $k$ be a commutative ring and $G$ a group such that $\mathrm{Cof}(kG)=\mathrm{GProj(kG)}$. Then $\mathrm{GProj}(kG)=\mathrm{PCof}(kG)$ if and only if  all locally projective Gorenstein projective $kG$--modules are projective.
\end{corollary}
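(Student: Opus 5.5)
By \cref{prop:equality of G-classes}, $\mathrm{GProj}(kG)=\mathrm{PCof}(kG)$ holds if and only if $\mathrm{GProj}(kG)\cap\Wfin=\mathrm{Proj}(kG)$. So the whole argument reduces to identifying the intersection $\mathrm{GProj}(kG)\cap\Wfin$ with $\mathrm{GProj}(kG)\cap\mathcal{F}\mathrm{Proj}(kG)$ under the hypothesis $\mathrm{Cof}(kG)=\mathrm{GProj}(kG)$. Once that identification is in place, condition (ii) of \cref{prop:equality of G-classes} says exactly that every locally projective Gorenstein projective $kG$--module is projective, which is the claim.

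For the identification, the hypothesis turns the intersection into $\mathrm{Cof}(kG)\cap\Wfin$. This equals $\mathrm{Cof}(kG)\cap\mathcal{F}\mathrm{Proj}(kG)$ by \eqref{eq:equality with fproj}; rewriting $\mathrm{Cof}(kG)$ back as $\mathrm{GProj}(kG)$ gives the desired equality.

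For completeness I will recheck \eqref{eq:equality with fproj} along the way.
\begin{itemize}
\item[$\subseteq$] Let $x\in\mathrm{Cof}(kG)\cap\Wfin$ and $F\in\Fin(G)$. Then $\res^G_F x$ is cofibrant by \cref{prop:cof_ind_kes}(i), and it lies in $\mathrm{Cof}(kF)^\perp$ by the definition of $\Wfin$. Hence $\res^G_F x\in\mathrm{Cof}(kF)\cap\mathrm{Cof}(kF)^\perp=\mathrm{Proj}(kF)$, so $x$ is locally projective.
\item[$\supseteq$] Projective $kF$--modules lie in $\mathrm{Cof}(kF)^\perp$, since that cotorsion pair is hereditary with $\mathrm{Cof}(kF)\cap\mathrm{Cof}(kF)^\perp=\mathrm{Proj}(kF)$. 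So a locally projective module lies in $\Wfin$.
\end{itemize}

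There is no real obstacle here; the only point needing care is the $\subseteq$ direction of \eqref{eq:equality with fproj}, and it uses \cref{prop:cof_ind_kes}(i).

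Alternatively, one can argue via \cref{prop:equality of classes}. Since $\mathrm{PCof}(kG)\subseteq\mathrm{Cof}(kG)=\mathrm{GProj}(kG)$, the equality $\mathrm{GProj}(kG)=\mathrm{PCof}(kG)$ is equivalent to $\mathrm{Cof}(kG)=\mathrm{PCof}(kG)$. By the remark following \cref{prop:equality of classes}, this holds if and only if every cofibrant locally projective module is projective, which is again the stated condition after substituting $\mathrm{GProj}(kG)$ for $\mathrm{Cof}(kG)$.
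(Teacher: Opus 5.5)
Your proposal is correct and follows essentially the paper's route. The paper combines \cref{prop:equality of G-classes} with \cref{thm:comparison}, whose content here is the identification $\mathrm{Cof}(kG)\cap\Wfin=\mathrm{Cof}(kG)\cap\mathcal{F}\mathrm{Proj}(kG)$ from \eqref{eq:equality with fproj} that you re-verify, and your alternative via \cref{prop:equality of classes} is the same reasoning in the other order. One small point: the inclusion $\mathrm{Proj}(kF)\subseteq\mathrm{Cof}(kF)^\perp$ follows directly from $\mathrm{Cof}(kF)\cap\mathrm{Cof}(kF)^\perp=\mathrm{Proj}(kF)$, so heredity plays no role there.
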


\begin{proof}
Follows from \cref{prop:equality of G-classes} together with \cref{thm:comparison}.
\end{proof}

\begin{proposition}\label{prop-model-hphi-new}
Let $k$ be a commutative ring and $G$ a group of type $\Phi_k$ such that $\mathrm{Cof}(kG)=\mathrm{GProj(kG)}$. Then $\mathrm{GProj}(kG)=\mathrm{PCof}(kG)$ holds true. In particular, $\mathsf{St}{\mathrm{GProj}}(kG)$ is equivalent to the proper stable module category of $G$.
\end{proposition}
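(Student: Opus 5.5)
By \cref{cor:when all classes agree}, and since we assume $\mathrm{Cof}(kG)=\mathrm{GProj}(kG)$, it suffices to show that every locally projective Gorenstein projective $kG$--module is projective. Equivalently, by \cref{prop:equality of G-classes}, we must show $\mathrm{GProj}(kG)\cap\Wfin=\mathrm{Proj}(kG)$. Note that $\mathrm{GProj}(kG)\cap\Wfin=\mathrm{Cof}(kG)\cap\mathcal{F}\mathrm{Proj}(kG)$ by \eqref{eq:equality with fproj}, so we may work with locally projective Gorenstein projectives.

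So let $x$ be Gorenstein projective with $\res^G_F x$ projective for every $F\in\Fin(G)$. Then $\mathrm{pd}_{kF}\res^G_F x=0<\infty$ for all finite $F$. Since $G$ is of type $\Phi_k$ (\cref{def:phi}), we conclude $\mathrm{pd}_{kG}x<\infty$.

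It remains to use the standard fact that a Gorenstein projective module of finite projective dimension is projective. To see this, take $x\in\mathrm{GProj}(kG)$ with $\mathrm{pd}_{kG}x=n<\infty$. Gorenstein projectives satisfy $\mathrm{Ext}^i_{kG}(x,q)=0$ for all $i\geq1$ whenever $\mathrm{pd}\,q<\infty$: this holds for $q$ projective by definition, and then follows for finite projective dimension by induction along a projective resolution of $q$. If $n\geq 1$, choose $0\to y\to p\to x\to 0$ with $p$ projective. Then $\mathrm{pd}\,y<\infty$, so $\mathrm{Ext}^1(x,y)=0$ and the sequence splits. Hence $x$ is a summand of $p$, and therefore projective.

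Thus $\mathrm{GProj}(kG)\cap\Wfin=\mathrm{Proj}(kG)$, which gives $\mathrm{GProj}(kG)=\mathrm{PCof}(kG)$. The final claim then follows from the corollary to \cref{thm-proper-coto}: the homotopy category $\mathrm{StMod}^{\mathrm{prop}}_{kG}$ is equivalent to $\mathsf{St}\,\mathrm{PCof}(kG)=\mathsf{St}\,\mathrm{GProj}(kG)$.

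None of these steps is a serious obstacle. The only point needing care is the vanishing of $\mathrm{Ext}$ against modules of finite projective dimension; it is standard and follows from dimension shifting using the totally acyclic complex.
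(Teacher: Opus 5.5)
Your proposal is correct and follows the paper's proof essentially verbatim: you reduce via \cref{cor:when all classes agree} to showing locally projective Gorenstein projectives are projective, use type $\Phi_k$ to get finite projective dimension, and then conclude they are projective. The only difference is that you prove inline, by dimension shifting, the standard fact that a Gorenstein projective module of finite projective dimension is projective, whereas the paper simply cites it (\cite[Proposition 2.27]{Holm}).
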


\begin{proof}
According to \cref{cor:when all classes agree} it suffices to prove that the locally projective Gorenstein projective $kG$--modules are projective. By \cref{def:phi} these modules are Gorenstein projective and of finite projective dimension, hence they are projective; see for instance \cite[Proposition 2.27]{Holm}. 
\end{proof}

Under the assumption that $k$ is commutative of finite global dimension, Mazza and Symonds introduced a stable module category in \cite[Definition~3.2]{MS19}. For groups of type $\Phi_k$, this is again the proper stable module category: 

\begin{corollary}\label{cor-ms-stable-agrees}
 Let $k$ be a commutative ring of finite global dimension and let $G$ be a group of type $\Phi_k$. Then the Mazza-Symonds stable module category is equivalent to the proper stable module category.
\end{corollary}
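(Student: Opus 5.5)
The plan is to reduce the statement to \cref{prop-model-hphi-new}. That proposition requires $\mathrm{Cof}(kG)=\mathrm{GProj}(kG)$, so the proof has two parts: checking this hypothesis, and then identifying the Mazza--Symonds category with $\mathsf{St}\,\mathrm{GProj}(kG)$.

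First, I check that $\mathrm{Cof}(kG)=\mathrm{GProj}(kG)$. Since $k$ has finite global dimension, it has finite weak global dimension. By assumption $G$ is of type $\Phi_k$. So \cite[Corollary 2.5]{ET_cof} gives $\mathrm{Cof}(kG)=\mathrm{GProj}(kG)$.

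\cref{prop-model-hphi-new} then yields $\mathrm{PCof}(kG)=\mathrm{GProj}(kG)$. In particular, $\mathrm{StMod}^{\mathrm{prop}}_{kG}\simeq \mathsf{St}\,\mathrm{PCof}(kG)=\mathsf{St}\,\mathrm{GProj}(kG)$.

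Second, I identify the Mazza--Symonds category with $\mathsf{St}\,\mathrm{GProj}(kG)$. Their category \cite[Definition~3.2]{MS19} is built from modules of the form $B\otimes_k x$ and stable homomorphisms computed via such complete resolutions. I expect their paper to show, for groups of type $\Phi_k$ over a ring of finite global dimension, that it is equivalent to the stable category of Gorenstein projectives, equivalently of Benson cofibrants. I would quote that comparison result.

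If no such comparison is available, I would argue directly. Every module $x$ admits a cofibrant approximation $0\to w\to c\to x\to 0$ with $\mathrm{pd}_{kG}(w)<\infty$. This should follow from the finiteness of $\mathrm{BCcd}$ for $\Phi_k$ groups over such $k$, or from $B\otimes -$ having finite projective dimension via the $\Phi_k$ property, since $\res^G_F B$ is free. Modules of finite projective dimension become zero in both categories. Then I would check that Mazza--Symonds Hom groups between cofibrant modules coincide with $\underline{\Hom}_{kG}$ modulo projectives. The main obstacle is this matching of morphism spaces. I would handle it by comparing both sides to $\Hom$ in $\mathsf{St}\,\mathrm{Cof}(kG)$, using that $\mathrm{Cof}(kG)$ is Frobenius (\cref{prop-model-struc}). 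Finally, I would verify that the resulting functor is triangulated, because both triangulations come from short exact sequences of cofibrants.
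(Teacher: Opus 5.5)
Your proposal is correct and follows the paper's proof exactly: \cite[Corollary~2.5]{ET_cof} gives $\mathrm{Cof}(kG)=\mathrm{GProj}(kG)$, and \cref{prop-model-hphi-new} then identifies the proper stable module category with $\mathsf{St}\,\mathrm{GProj}(kG)$. The comparison you expected to quote is \cite[Theorem~3.10]{MS19}, which identifies the Mazza--Symonds category with $\mathsf{St}\,\mathrm{GProj}(kG)$, so the paper relies on it and your fallback direct argument is not needed.
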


\begin{proof}
Under the assumptions given, by \cite[Corollary~2.5]{ET_cof} we deduce that 
$\mathrm{Cof}(kG)=\mathrm{GProj(kG)}$. Thus \cref{prop-model-hphi-new} implies that $\mathsf{St}{\mathrm{GProj}}(kG)$ is equivalent to the proper stable module category of $G$. Since the Mazza-Symonds stable module category is equivalent to $\mathsf{St}{\mathrm{GProj}}(kG)$ by \cite[Theorem 3.10]{MS19}, the result follows.
\end{proof}


\begin{remark}\label{rem-stable-inftycat2}
Let $k$ be a field and $G$ a group of type $\Phi_k$. In \cite{Gom24} the author builds a model structure on $kG$--modules, whose homotopy category agrees with the Mazza-Symonds stable module category. It follows that this is also an instance of the proper stable module category on $kG$--modules.
\end{remark}

The following result may be seen  as a generalization of \cref{cor-ms-stable-agrees}.

\begin{proposition}\label{prop-model-hphi}
    Let $k$ be a commutative ring of finite global dimension and let $G$ be a group in $\mathrm{H}\Phi_k$. Then $\Wfin$ agrees with $\Cof(kG)^\perp$. Therefore the triples
    \[
    (\mathrm{PCof}(kG),\mathrm{PCof}(kG)^{\bot},\Mod(kG)) \mbox{  and  } (\Cof(kG),\Cof(kG)^\perp,\Mod(kG))
    \]
      induce the same abelian model structure on $\Mod(kG)$.  In particular, $\mathsf{St}{\Cof}(kG)$ is equivalent to the proper stable module category of $G$.
\end{proposition}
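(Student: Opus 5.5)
By \cref{rem-Wfin-contains-cof} we already have $\Cof(kG)^\perp\subseteq\Wfin$. The plan is therefore to prove the reverse inclusion $\Wfin\subseteq\Cof(kG)^\perp$ for $G\in\mathrm{H}\Phi_k$. Equivalently, by \cref{prop:equality of classes} and \eqref{eq:equality with fproj}, it suffices to show that every cofibrant, locally projective $kG$--module is projective. Once the two classes of trivial objects coincide, the two cotorsion pairs coincide. Since both Hovey triples have all modules fibrant, the model structures coincide, and the last claim follows from \cref{prop-model-struc} together with \cref{thm-proper-coto}.

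So let $x\in\Cof(kG)\cap\mathcal{F}\mathrm{Proj}(kG)$. The strategy is transfinite induction on the ordinal $\alpha$ with $G\in\mathrm{H}_\alpha\Phi_k$, proving the statement that \emph{every $\mathrm{Cof}$ and locally projective module over a group in $\mathrm{H}_\alpha\Phi_k$ has finite projective dimension}. Since $\Cof(kG)\subseteq\mathrm{GProj}(kG)$ by \cite{CP}, finite projective dimension then forces projectivity, as in the proof of \cref{prop-model-hphi-new} via \cite[Proposition 2.27]{Holm}.

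For $\alpha=0$, $G$ is of type $\Phi_k$. Local projectivity gives $\mathrm{pd}_{kF}\res^G_F x=0$ for all finite $F$, so $\mathrm{pd}_{kG}x<\infty$ by \cref{def:phi}.

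For $\alpha>0$, $G$ acts on a finite-dimensional contractible CW-complex $X$ with cell stabilisers $H_\sigma\in\mathrm{H}_\beta\Phi_k$ for some $\beta<\alpha$. The augmented cellular chain complex gives an exact sequence
\[
0\to C_n\to\cdots\to C_0\to \mathbb{Z}\to 0
\]
with $C_i=\bigoplus_\sigma\mathbb{Z}[G/H_\sigma]$, and this sequence is $\mathbb{Z}$-split. Tensoring with $k$ and then with $x$ yields an exact sequence
\[
0\to C_n\otimes x\to\cdots\to C_0\otimes x\to x\to 0,
\]
where $k[G/H_\sigma]\otimes x\cong\ind_{H_\sigma}^G\res^G_{H_\sigma}x$. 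The restriction $\res^G_{H_\sigma}x$ is again locally projective. It is also cofibrant over $kH_\sigma$: restriction preserves $\mathrm{PCof}$ by \cref{lemma-Quillen-adj}, and restriction of Benson cofibrants to arbitrary subgroups holds because $\res B(G,k)$ is a sum of copies of $B(H,k)$-type modules.

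If that cofibrancy is problematic, the alternative is to run the whole induction with the class $\Wfin$ directly. That is, show that $w\in\Wfin$ lies in $\Cof(kG)^\perp$ by proving $\Ext^1_{kG}(c,w)=0$ for $c\in\Cof(kG)$, dimension-shifting along the resolution $C_\bullet\otimes c$ and using that $\res^G_{H_\sigma}w\in\W_{H_\sigma}$ (\cref{rem:finite_W}). Granting the inductive hypothesis at each stabiliser, each $\res^G_{H_\sigma}x$ has finite projective dimension, hence so does its induction. Since $n$ is finite, the resolution then bounds $\mathrm{pd}_{kG}x$. Here finiteness of the global dimension of $k$ is used to keep the dimensions uniform, in the way it is in \cite[Corollary 2.5]{ET_cof}.

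\textbf{Main obstacle.} The limit/uniformity issue is the hard part. Finite projective dimension for each stabiliser is not enough unless the dimensions are bounded across cells. I would address this by proving instead that $x$ lies in the relevant class directly: show by the same induction that $\Ext^i_{kH}(x,-)$ vanishes on projectives for large $i$ in a uniform range. Alternatively, I would use the Gorenstein route: $x$ is Gorenstein projective, so $\Ext^i(x,P)=0$ for all $i\ge1$ and projective $P$, which reduces the question to projectivity of the stabiliser restrictions, avoiding bounds altogether.
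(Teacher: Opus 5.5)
Your main route is correct, and it is the mirror image of the paper's argument. The paper works on the side of trivial objects. For $y\in\Wfin$ it applies $\hom_k(-,y)$ to the $k$-split augmented cellular chain complex and shows that each $\hom_k(C_i,y)$ lies in $\Cof(kG)^\perp$, using $\Ext^1_{kG}(x,\hom_k(C_i,y))\cong\prod\Ext^1_{kH}(\res^G_H x,\res^G_H y)$ together with the inductive hypothesis $\W_H=\Cof(kH)^\perp$. It then concludes by thickness of $\Cof(kG)^\perp$. You work on the cofibrant side instead. After the valid reduction to $\Cof(kG)\cap\mathcal{F}\mathrm{Proj}(kG)=\mathrm{Proj}(kG)$, tensoring the same complex with $x$ gives a finite resolution by the modules $\bigoplus\ind_{H_\sigma}^G\res^G_{H_\sigma}x$. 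Where the paper uses thickness, you use the fact that a Gorenstein projective module of finite projective dimension is projective. The two arguments are adjoint via $\Ext^1_{kG}(x\otimes C_i,y)\cong\Ext^1_{kG}(x,\hom_k(C_i,y))$. Your version verifies the criterion of \cref{thm:comparison} directly. Your base case (type $\Phi_k$, plus $\Cof(kG)\subseteq\mathrm{GProj}(kG)$ and \cite[Proposition~2.27]{Holm}) is also more direct than the paper's, which passes through \cref{cor-ms-stable-agrees}.

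Three points need fixing, none of them fatal.

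\emph{The main obstacle is not an obstacle.} State the induction as ``cofibrant and locally projective implies \emph{projective}''. Since $\res^G_{H_\sigma}x$ is cofibrant, hence Gorenstein projective, the inductive hypothesis makes it projective. So every $C_i\otimes x$ is projective, $\mathrm{pd}_{kG}x\leqslant n$, and no uniform bound over cells is ever needed.

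\emph{Your justification that $\res^G_{H_\sigma}x\in\Cof(kH_\sigma)$ is wrong as stated.} \cref{lemma-Quillen-adj} is beside the point, since $x$ is only known to be Benson cofibrant. Also, $\res^G_H B(G,k)$ is not a sum of copies of $B(H,k)$; it is the module of bounded functions on $H\times(H\backslash G)$. What holds, and suffices, is that $B(H,k)$ is a $kH$-direct summand of $\res^G_H B(G,k)$. Choose right coset representatives $T\ni t_0$. Send $f$ to the function $ht\mapsto f(h)$, and retract via $F\mapsto(h\mapsto F(ht_0))$. Tensoring with $\res^G_H x$ then exhibits $B(H,k)\otimes\res^G_H x$ as a summand of the projective $kH$-module $\res^G_H(B(G,k)\otimes x)$.

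\emph{Your fallback via $\Wfin$ is the paper's proof, and it does not avoid this point.} The paper also needs $\res^G_H x\in\Cof(kH)$, and it obtains this from $\Cof=\mathrm{GProj}$ via \cite[Corollary~2.5]{ET_cof}. That step, not uniformity of dimensions, is where $\mathrm{gldim}(k)<\infty$ is used. With the summand argument, your proof does not use finite global dimension at all.
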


\begin{proof}
We proceed by induction on the ordinal $\alpha$ such that $G\in \mathrm{H}_\alpha\Phi_k$. The base case is $\alpha=0$ {and follows from  \cref{cor-ms-stable-agrees}.}


Now, assume that $G$ is in $\mathrm{H}_\alpha\Phi_k$ for $\alpha\geq1$. By definition, there is a finite-dimensional CW-complex $X$ with a cellular action of $G$ such that the isotropy group of each cell lies in $\mathrm{H}_\beta\Phi_k$, for $\beta<\alpha$. In particular, the augmented  cellular chain complex $C_\ast(X;k)$ of $X$ is an acyclic complex of $kG$--modules 
 \begin{equation}
        \label{eq-cell-chain}
        0\to C_n \to \ldots \to C_0\to k\to 0
    \end{equation}
where $C_i$ is of the form $\bigoplus \ind_H^G k$ with $H\in \mathrm{H}_\beta\Phi_k$, for $\beta<\alpha$. In fact, $C_i$ is indexed over the equivariant cells of $X$ of dimension $i$, and the $H$ are the isotropies of such equivariant cells. 

Now, let $y\in \Wfin$.  Note that the complex from \cref{eq-cell-chain} is $k$-split, hence applying the functor $\hom_k(-,y)$ gives us an acyclic complex of $kG$--modules
\[
 0\to y \to \hom_k(C_0,y) \to \ldots \to \hom_k(c_n,y) \to 0
\]
We claim that the terms $\hom_k(C_i,y)$ belong to $\Cof(kG)^\perp$. Note that if this is the case we can conclude the result since $\Cof(kG)^\perp$ has the 2-out-of-3 property. Let $x\in \Cof(kG)$. We have  
 \begin{align*}
         \mathrm{Ext}^1_{kG}(x,\hom_k(C_i,y))\simeq & \mathrm{Ext}^1_{kG}(x\otimes C_i, y) \\ \simeq &   \prod \mathrm{Ext}^1_{kG}(\ind_H^G(\res^G_H(x)),y) \\ 
         \simeq & \prod \mathrm{Ext}^1_{kH}(\res^G_H(x),\res^G_H(y))=0.
    \end{align*}
 The second equivalence follows from the fact that the tensor product commutes with coproducts and  the projection formula: $(\ind_H^Gk)\otimes x \simeq \ind_H^G (\res^G_H x)$. The last equality follows from the induction hypothesis. Indeed, under our assumptions we can invoke \cite[Corollary 2.5]{ET_cof} which says that the class of Benson cofibrants agrees with the class of Gorenstein projective modules. It follows that $\res^G_H x$ must be in $\Cof(kH)$ and hence it must also lie in $\mathrm{PCof}(kH)$ by induction. This completes the proof. 
\end{proof}

\section{Homological dimensions}\label{sec-homological}

In this section we study finiteness conditions of certain homological dimensions defined via the classes of Benson cofibrant and proper cofibrant objects. As usual, we fix a commutative ring $k$ and a group $G$.

\begin{definition} Let $x$ be a $kG$--module.  If there exists an exact sequence of the form 
\[
0\rightarrow c_n\rightarrow c_{n-1}\rightarrow \dots\rightarrow c_0\rightarrow x\rightarrow 0 
\]
with each $c_i$ belonging to $\mathrm{Cof}(kG)$, then we say that $x$ admits a \textit{Benson cofibrant resolution} of length $n$, and we write $\mathrm{Cofdim}_{kG}(x)\leqslant n$.
The \textit{Benson cofibrant dimension} $\mathrm{Cofdim}_{kG}(x)$ is defined to be the length of a shortest such resolution, provided one exists.
If $x$ does not admit a finite Benson cofibrant resolution, then
we write $\mathrm{Cofdim}_{kG}(x)=\infty$.

Similarly, replacing the class $\mathrm{Cof}(kG)$ by $\mathrm{PCof}(kG)$,
we define \textit{proper cofibrant resolutions} and the \textit{proper cofibrant dimension} $\mathrm{PCofdim}_{kG}(x)$.
\end{definition}

\begin{remark}
Since both $(\mathrm{Cof}(kG),\mathrm{Cof(kG)}^{\perp})$ and $(\mathrm{PCof}(kG),\mathrm{PCof(kG)}^{\perp})$ are complete cotorsion pairs in the category of all $kG$--modules, resolutions by Benson cofibrants and proper cofibrants always exist, and they can even be constructed  unique up to homotopy; see for instance \cite[Section 8.1]{EJ}.
\end{remark}

\begin{recollection}
\label{rec:classes}
As already mentioned, there are inclusions
\begin{equation}
\label{eq:containments}
\mathrm{Proj}(kG)\subseteq\mathrm{PCof}(kG)\subseteq \mathrm{Cof}(kG)\subseteq \mathrm{GProj}(kG).
\end{equation}
These inclusions imply that, for every $kG$--module $x$, one has
\begin{equation}
\label{eq:inequalities}
 \mathrm{Gpd}_{kG}(x) \leqslant \mathrm{Cofdim}_{kG}(x)\leqslant \mathrm{PCofdim}_{kG}(x)  \leqslant\mathrm{pd}_{kG}(x)
\end{equation}
where by $\mathrm{Gpd}_{kG}(x)$ and $\mathrm{pd}_{kG}(x)$ we denote the Gorenstein projective dimension and the projective dimension of $x$, respectively. 
\end{recollection}
It is well known that $\mathrm{Gpd}$ refines $\mathrm{pd}$; 
that is, whenever $\mathrm{pd}_{kG}(x)<\infty$, one has
$\mathrm{Gpd}_{kG}(x)=\mathrm{pd}_{kG}(x)$. 
We show in \cref{cor:refinement} that $\mathrm{Cofdim}$ similarly refines $\mathrm{PCofdim}$. 
With a similar argument one can prove that $\mathrm{Gpd}$ refines $\mathrm{Cofdim}$ by combining
\cite[Lemma~4.5.2]{benson97} with \cite[Proposition~2.2]{ET_cof}. Consequently, each of the homological dimensions appearing in \eqref{eq:inequalities} refines those to its right.

\begin{remark}
\label{rem:finiteness}
The classes $\mathrm{Cof}(kG)$ and $\mathrm{PCof}(kG)$ are closed under coproducts, extensions, kernels of epimorphisms and contain all the projective $kG$--modules. It is a formal consequence of these properties that the finiteness of the homological dimensions $\mathrm{Cofdim}_{kG}$ and $\mathrm{PCofdim}_{kG}$ satisfies the 2-out-of-3 property for short exact sequences and a standard comparison formula for short exact sequences. 
For a proof, the reader may consult, for instance, \cite[Section~2]{DE}; the arguments given there apply verbatim in the present setting.
\end{remark}

\begin{proposition}
\label{prop:sex_cof}
Let $x$ be a $kG$--module with $\mathrm{PCofdim}_{kG}(x)=n>0$. Then there exists a short exact sequence of $kG$--modules $0\rightarrow z\rightarrow q\rightarrow x\rightarrow 0$ where $q\in\mathrm{PCof}(kG)$ and $\mathrm{pd}_{kG}(z)=n-1.$
\end{proposition}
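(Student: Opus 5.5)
This is the standard Auslander--Buchweitz style approximation argument, using the complete hereditary cotorsion pair $(\mathrm{PCof}(kG),\Wfin)$ from \cref{thm-proper-coto}. The aim is to find a short exact sequence $0\to z\to q\to x\to 0$ with $q\in\mathrm{PCof}(kG)$ and $z\in\Wfin$, and then to show that $z$ has projective dimension exactly $n-1$.

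First, choose a proper cofibrant resolution of length $n$,
\[
0\rightarrow c_n\rightarrow c_{n-1}\rightarrow \dots\rightarrow c_0\rightarrow x\rightarrow 0,
\]
and let $k_1=\ker(c_0\to x)$. Then $\mathrm{PCofdim}_{kG}(k_1)\leqslant n-1$. We induct on $n$ to produce a sequence $0\to k_1\to w\to c'\to 0$ with $c'\in\mathrm{PCof}(kG)$ and $\mathrm{pd}_{kG}(w)\leqslant n-1$.

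For the inductive step, suppose first that $k_1$ itself fits into a sequence $0\to z_1\to q_1\to k_1\to 0$ with $q_1\in\mathrm{PCof}(kG)$ and $\mathrm{pd}(z_1)\leqslant n-2$. Take a special $\Wfin$-preenvelope $0\to q_1\to p\to c''\to 0$, using completeness. Here $p\in\Wfin\cap\mathrm{PCof}(kG)=\mathrm{Proj}(kG)$, because $\mathrm{PCof}(kG)$ is closed under extensions. Pushing out along $q_1\to k_1$ gives $0\to k_1\to w\to c''\to 0$ together with $0\to z_1\to p\to w\to 0$. Hence $\mathrm{pd}(w)\leqslant n-1$.

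The base case is $\mathrm{PCofdim}(k_1)=0$. There one takes the preenvelope $0\to k_1\to p\to c''\to 0$ directly, with $p$ projective.

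Given $0\to k_1\to w\to c'\to 0$, pushing out $0\to k_1\to c_0\to x\to 0$ along $k_1\to w$ yields
\[
0\to w\to q\to x\to 0 \quad\text{and}\quad 0\to c_0\to q\to c'\to 0.
\]
The module $q$ is an extension of proper cofibrants, so $q\in\mathrm{PCof}(kG)$. Set $z=w$, which gives $\mathrm{pd}(z)\leqslant n-1$.

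It remains to show that $\mathrm{pd}(z)\geqslant n-1$. Suppose instead that $\mathrm{pd}(z)\leqslant n-2$. Resolving $z$ projectively and splicing with $0\to z\to q\to x\to 0$ gives a proper cofibrant resolution of $x$ of length at most $n-1$. This contradicts $\mathrm{PCofdim}(x)=n$.

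The main obstacle is bookkeeping rather than any single hard step. The induction must be phrased so that the bound $\mathrm{pd}\leqslant n-1$ survives the pushouts, and the notion of $\mathrm{pd}$ of a kernel must be handled correctly when $n=1$, where $z$ is projective.

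The only structural inputs needed are completeness of $(\mathrm{PCof}(kG),\Wfin)$, the identity $\mathrm{PCof}(kG)\cap\Wfin=\mathrm{Proj}(kG)$, and closure of $\mathrm{PCof}(kG)$ under extensions, all from \cref{thm-proper-coto}.
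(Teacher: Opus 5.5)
Your argument is correct, and it takes a genuinely different route from the paper's.

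\textbf{The paper's route.} It imitates Holm's proof for Gorenstein projective dimension. First, since $\mathrm{PCof}(kG)$ is projectively resolving and closed under sums and summands, the $n$-th syzygy $y$ of a projective resolution of $x$ is proper cofibrant. Completeness of $(\mathrm{PCof}(kG),\mathcal{W}_G)$ then coresolves $y$ by projectives with proper cofibrant cokernels. The paper checks that this coresolution stays exact under $\Hom_{kG}(-,p)$ for projective $p$, via $\mathrm{PCof}(kG)\subseteq\mathrm{GProj}(kG)$. Finally it compares the coresolution with the projective resolution through a chain map and a mapping cone.

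\textbf{Your route.} You run the Auslander--Buchweitz approximation argument on a given proper cofibrant resolution. You alternate, via pushouts, a right approximation $0\to M\to w\to c\to 0$ with $\mathrm{pd}(w)\leqslant m$ and a left approximation $0\to z\to q\to M\to 0$ with $\mathrm{pd}(z)\leqslant m-1$.

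\textbf{What each buys.} You use only three inputs: completeness, the identity $\mathrm{PCof}(kG)\cap\mathcal{W}_G=\mathrm{Proj}(kG)$, and extension closure. You therefore need neither closure of $\mathrm{PCof}(kG)$ under kernels of epimorphisms, sums or summands, nor the $\Hom(-,\mathrm{proj})$-exactness, nor the chain-map comparison. The argument works verbatim for any complete cotorsion pair whose core is $\mathrm{Proj}(kG)$. Your right approximation, applied to $M=k$, also gives (i)$\Rightarrow$(ii) of \cref{prop:PC1} directly, without passing through \cref{prop:BC1} and \cref{cor:pcofs}. The resulting sequence is automatically $k$-split because $c$ is $k$-projective. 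The paper's route, in turn, is non-inductive, keeps the parallel with the Gorenstein theory visible, and records along the way that every $n$-th syzygy of $x$ is proper cofibrant.

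\textbf{Write-up.} State the two inductive claims for $\mathrm{PCofdim}_{kG}(M)\leqslant m$ rather than $=m$, since $k_1$ may have dimension strictly below $n-1$. Your \emph{base case} paragraph is then just the case $m=0$ of the right approximation.
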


\begin{proof}
The proof is akin to \cite[Theorem~2.10]{Holm}. Since $\mathrm{PCof}(kG)$ is projectively resolving, 
closed under direct sums and closed under direct summands, a standard argument implies that the $n$-th syzygy $y$ in a projective resolution of $x$ belongs in $\mathrm{PCof}(kG)$. By \cref{coro-wfin-complete} there exists an exact complex 
\[
0\rightarrow y\rightarrow q^0\rightarrow q^1\rightarrow \dots \rightarrow q^{n-1}\rightarrow x\rightarrow 0
\]
whose syzygies belong in $\mathrm{PCof}(kG)$ and the modules $q^0,q^1,\dots,q^{n-1}$ belong in $\mathrm{PCof}(kG)\cap\mathrm{PCof}(kG)^{\bot}=\mathrm{Proj}(kG)$. In addition, this complex remains exact after applying the functor $\Hom_{kG}(-,p)$, for any projective module $p$. Indeed,  this follows for instance since $\mathrm{PCof}(kG)\subseteq \mathrm{GProj}(kG)$. Now, as in \cite[Theorem~2.10]{Holm}, by comparing this complex with the projective resolution of $x$ that has $y$ as its $n$-th syzygy, a mapping cone argument shows the existence of a short exact sequence $0\rightarrow z\rightarrow q\rightarrow x\rightarrow 0$ where $q\in\mathrm{PCof}(kG)$ and $\mathrm{pd}_{kG}(z)=n-1.$
\end{proof}

\begin{corollary}
\label{cor:pcofs}
Let $x$ be a $kG$--module in $\mathrm{Cof}(kG)$ such that $\mathrm{PCofdim}_{kG}(x)<\infty$. Then $x\in\mathrm{PCof}(kG)$.
\end{corollary}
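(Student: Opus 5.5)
We argue by induction on $n=\mathrm{PCofdim}_{kG}(x)$; the case $n=0$ is the conclusion itself. So assume $n>0$. By \cref{prop:sex_cof} there is a short exact sequence
\[
0\rightarrow z\rightarrow q\rightarrow x\rightarrow 0
\]
with $q\in\mathrm{PCof}(kG)$ and $\mathrm{pd}_{kG}(z)=n-1<\infty$. The plan is to show that this sequence splits. Then $x$ is a direct summand of $q$, hence lies in $\mathrm{PCof}(kG)$, because $\mathrm{PCof}(kG)$ is the left class of a cotorsion pair and so is closed under summands. (This contradicts $n>0$, which is fine: it just shows the minimal dimension is $0$.)

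To get the splitting it suffices to prove $\mathrm{Ext}^1_{kG}(x,z)=0$. Since $x\in\mathrm{Cof}(kG)$, it is enough to show $z\in\mathrm{Cof}(kG)^{\perp}$. This is the key step, and I would handle it as follows. Since $(\mathrm{Cof}(kG),\mathrm{Cof}(kG)^\perp)$ is a hereditary cotorsion pair (\cref{prop-model-struc}), every projective module lies in $\mathrm{Cof}(kG)^\perp$. Projectives belong to the trivial class $\mathrm{Cof}(kG)^\perp\cap\mathrm{Cof}(kG)$, and Ext into projectives vanishes from any Benson cofibrant module, since such a module is Gorenstein projective by \cite{CP}.

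Now $z$ has a finite projective resolution $0\to p_{n-1}\to\cdots\to p_0\to z\to 0$. The class $\mathrm{Cof}(kG)^\perp$ is thick by \cref{thm:Hovey}, so it satisfies 2-out-of-3 for short exact sequences. Splitting the resolution into short exact sequences and inducting from the left end, we get that each syzygy lies in $\mathrm{Cof}(kG)^\perp$, and in particular $z\in\mathrm{Cof}(kG)^\perp$. Hence $\mathrm{Ext}^1_{kG}(x,z)=0$, the sequence splits, and $x\in\mathrm{PCof}(kG)$.

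An alternative for the same step avoids thickness altogether: from $x\in\mathrm{GProj}(kG)$ and $\mathrm{pd}(z)<\infty$ one gets $\mathrm{Ext}^{\geq1}_{kG}(x,z)=0$ directly by dimension shifting. The main point to check carefully is that projectives are trivial in the Benson model structure. This holds because $\mathrm{Cof}(kG)\cap\mathrm{Cof}(kG)^\perp=\mathrm{Proj}(kG)$, which comes from the Frobenius structure in \cref{prop:her_cat}.
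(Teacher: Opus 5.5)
Your proof is correct and follows the paper's argument: apply \cref{prop:sex_cof} to get $0\to z\to q\to x\to 0$ with $q\in\mathrm{PCof}(kG)$ and $\mathrm{pd}_{kG}(z)<\infty$, observe that it splits because $x\in\mathrm{Cof}(kG)$, and conclude because $\mathrm{PCof}(kG)$ is closed under summands. Your justification of the splitting (finite-projective-dimension modules lie in $\mathrm{Cof}(kG)^\perp$, or directly via $\mathrm{Cof}(kG)\subseteq\mathrm{GProj}(kG)$) fills in a step the paper leaves implicit, and your ``induction'' on $n$ is really just the case split $n=0$ versus $n>0$, since no inductive hypothesis is ever used.
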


\begin{proof}
By \cref{prop:sex_cof} there exists a short exact sequence $0\rightarrow z\rightarrow q\rightarrow x\rightarrow 0$ with $q\in\mathrm{PCof}(kG)$ and $\mathrm{pd}_{kG}(z)<\infty.$ Since $x\in\mathrm{Cof}(kG)$ this sequence splits. The class $\mathrm{PCof}(kG)$ is closed under summands, thus  $x\in\mathrm{PCof}(kG).$
\end{proof}

\begin{corollary}
\label{cor:refinement}
Let $x$ be a $kG$--module with $\mathrm{PCofdim}_{kG}(x)<\infty$. Then the equality $\mathrm{Cofdim}_{kG}(x)=\mathrm{PCofdim}_{kG}(x)$ holds true.
\end{corollary}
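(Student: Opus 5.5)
We must show $\mathrm{Cofdim}_{kG}(x)=\mathrm{PCofdim}_{kG}(x)$ whenever the latter is finite. One inequality is already recorded in \eqref{eq:inequalities}: since $\mathrm{PCof}(kG)\subseteq\mathrm{Cof}(kG)$, every proper cofibrant resolution is a Benson cofibrant resolution, so $\mathrm{Cofdim}_{kG}(x)\leqslant \mathrm{PCofdim}_{kG}(x)$. In particular $m:=\mathrm{Cofdim}_{kG}(x)$ is finite, and it remains to prove $\mathrm{PCofdim}_{kG}(x)\leqslant m$.

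To do this, take a projective resolution of $x$ and let $y$ be its $m$-th syzygy, so that there is an exact sequence
\[
0\rightarrow y\rightarrow p_{m-1}\rightarrow\dots\rightarrow p_0\rightarrow x\rightarrow 0
\]
with each $p_i$ projective. We will show that $y\in\mathrm{PCof}(kG)$; this sequence is then a proper cofibrant resolution of length $m$, giving the desired bound.

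First, $y\in\mathrm{Cof}(kG)$. This is the standard fact for a resolving class: $\mathrm{Cof}(kG)$ contains the projectives and is closed under extensions, kernels of epimorphisms, coproducts and summands (\cref{rem:properties of cofs}), so if $\mathrm{Cofdim}_{kG}(x)\leqslant m$ then every $m$-th syzygy of $x$ lies in $\mathrm{Cof}(kG)$. The argument is the same as the one cited in \cref{prop:sex_cof} (cf. \cite[Section~2]{DE}, \cite{Holm}), applied to $\mathrm{Cof}$ instead of $\mathrm{PCof}$.

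Second, $y$ has finite proper cofibrant dimension. Splicing the displayed sequence with a proper cofibrant resolution of $x$ of length $n=\mathrm{PCofdim}_{kG}(x)$, the 2-out-of-3 property for finiteness of $\mathrm{PCofdim}$ (\cref{rem:finiteness}) gives $\mathrm{PCofdim}_{kG}(y)<\infty$. Concretely, one induces along the short exact sequences: each syzygy $\Omega^{i+1}x$ sits in $0\to\Omega^{i+1}x\to p_i\to\Omega^i x\to 0$ with $p_i$ projective, so finiteness of $\mathrm{PCofdim}$ passes from $\Omega^i x$ to $\Omega^{i+1}x$.

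We now have $y\in\mathrm{Cof}(kG)$ with $\mathrm{PCofdim}_{kG}(y)<\infty$, so \cref{cor:pcofs} gives $y\in\mathrm{PCof}(kG)$, and hence $\mathrm{PCofdim}_{kG}(x)\leqslant m$. The main point requiring care is the syzygy statement in the second paragraph: that any $m$-th syzygy, and not just one coming from a chosen cofibrant resolution, lies in $\mathrm{Cof}(kG)$. I would handle it by the usual Schanuel-type comparison of resolutions for resolving classes closed under summands, as in \cite[Theorem~2.10]{Holm}.
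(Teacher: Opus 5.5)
Your proof is correct and is essentially the paper's argument: both apply \cref{cor:pcofs} to a syzygy that lies in $\mathrm{Cof}(kG)$, by the standard syzygy lemma for resolving classes, and has finite proper cofibrant dimension. The differences are cosmetic: you take the syzygy in a projective resolution and argue directly, whereas the paper takes the cokernel $c_\lambda$ inside the given proper cofibrant resolution and argues by contradiction.
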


\begin{proof}
Let $\mathrm{PCofdim}_{kG}(x)=n$, and consider a proper cofibrant resolution of $x$ of length $n$ 
\[
0\rightarrow q_n\rightarrow q_{n-1}\rightarrow \cdots\rightarrow q_0\rightarrow x\rightarrow 0.
\]
From \cref{eq:inequalities} we know that $\mathrm{Cofdim}_{kG}(x)\leqslant n$.   Assume that $\mathrm{Cofdim}_{kG}(x)=\lambda<n$. In this case, by the exact complex considered above it follows that  $c_{\lambda}\coloneqq\mathrm{coker}(q_{\lambda+1}\rightarrow q_\lambda)$ belongs in $\mathrm{Cof}(kG).$ But $c_\lambda$ is also of finite proper cofibrant dimension, and therefore belongs in $\mathrm{PCof}(kG)$ by \cref{cor:pcofs}. This implies that $\mathrm{PCofdim}(x)\leqslant\lambda$ which is a contradiction. Hence $\mathrm{Cofdim}_{kG}(x)=n$.
\end{proof}

\begin{definition}\label{def-Bensondim}
The \textit{Benson cofibrant cohomological dimension} of $G$ is defined as $\mathrm{BCcd}_kG\coloneqq  \mathrm{Cofdim}_{kG}(k)$. Similarly, the \textit{proper cofibrant cohomological dimension} of $G$ is defined as $\mathrm{PCcd_{k}G\coloneq}\mathrm{PCofdim}_{kG}(k)$.
\end{definition}

We are interested in characterizing the finiteness of the dimensions $\mathrm{BCcd}_kG$ and $\mathrm{PCcd}_{k}G$. 
In \cite{em-t}, the authors proved that, when $k$ has finite  global dimension, the Gorenstein cohomological dimension $\mathrm{Gcd}_kG$ is finite if and only if all $kG$--modules have finite Gorenstein projective dimension. 
Below, we establish analogous results for the Benson cofibrant and proper cofibrant dimensions. Our proofs follow some arguments from \cite{em-t}, which are themselves inspired by the seminal work of \cite{CP}.

\begin{proposition}
\label{prop:BC1}
For any commutative ring $k$ and group $G$ the following are equivalent:
\begin{itemize}
    \item[(i)] The trivial $kG$--module $k$ has finite Benson cofibrant dimension, that is, $\mathrm{BCcd}_{k}G<\infty$.
    \item[(ii)] There exists a $k$-split short exact sequence $0\rightarrow k\rightarrow w\rightarrow c\rightarrow 0$ of $kG$--modules where $\mathrm{pd}_{kG}(w)<\infty$ and $c\in\mathrm{Cof}(kG)$.
    \end{itemize}
\end{proposition}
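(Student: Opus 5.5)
For (ii)$\Rightarrow$(i), I would resolve $w$ and $c$ and glue the resolutions. Since $c\in\mathrm{Cof}(kG)$, we have $\mathrm{Cofdim}_{kG}(c)=0$. Since $\mathrm{pd}_{kG}(w)<\infty$ and projectives are Benson cofibrant, $\mathrm{Cofdim}_{kG}(w)<\infty$. By the 2-out-of-3 property for finiteness of $\mathrm{Cofdim}$ in \cref{rem:finiteness}, applied to $0\rightarrow k\rightarrow w\rightarrow c\rightarrow 0$, we get $\mathrm{Cofdim}_{kG}(k)<\infty$. Concretely, $\mathrm{Cofdim}(k)\leqslant\max\{\mathrm{Cofdim}(w),\mathrm{Cofdim}(c)+1\}$. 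The $k$-splitness is not needed in this direction.

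For (i)$\Rightarrow$(ii), I would follow the Cornick--Kropholler/Emmanouil--Talelli pattern. Let $n=\mathrm{BCcd}_kG$. The analogue of \cref{prop:sex_cof} for $\mathrm{Cof}(kG)$ holds by the same proof: $\mathrm{Cof}(kG)$ is projectively resolving, closed under sums and summands, its cotorsion pair is complete with $\mathrm{Cof}(kG)\cap\mathrm{Cof}(kG)^\perp=\mathrm{Proj}(kG)$, and $\mathrm{Cof}(kG)\subseteq\mathrm{GProj}(kG)$. Using the dual form of that construction, the complete cotorsion pair provides the coresolution. Starting from a cofibrant resolution of length $n$, I would produce a short exact sequence
\[0\rightarrow k\rightarrow w\rightarrow c\rightarrow 0\]
with $c\in\mathrm{Cof}(kG)$ and $\mathrm{pd}_{kG}(w)\leqslant n$. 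This is the ``Gorenstein projective preenvelope/hull'' sequence, as in \cite[Theorem~2.10]{Holm}. Concretely, I would take a projective resolution of $k$ whose $n$-th syzygy $y$ is cofibrant. I would then embed $y$ into an exact $\Hom(-,\mathrm{Proj})$-exact sequence $0\rightarrow y\rightarrow p^0\rightarrow\dots\rightarrow p^{n-1}\rightarrow c\rightarrow 0$ with projective $p^i$ and cofibrant cokernel $c$. This uses the completeness of $(\mathrm{Cof}(kG),\mathrm{Cof}(kG)^\perp)$ iteratively, together with the fact that the special preenvelope of a cofibrant has projective middle term and cofibrant cokernel, since $\mathrm{Cof}(kG)$ is closed under extensions. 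Lifting the identity of $y$ to a chain map into the projective resolution, the mapping cone gives $0\rightarrow k\rightarrow w\rightarrow c\rightarrow 0$ with $w$ of projective dimension at most $n$.

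The remaining, and main, point is $k$-splitness. Here I would use \cref{prop:cof_proj}: $c$ is cofibrant, hence projective as a $k$-module. Therefore the surjection $w\rightarrow c$ splits over $k$, so the sequence is $k$-split.

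The main obstacle is carefully verifying the construction of the sequence with $w$ of finite projective dimension. This is the mapping cone step, which needs the $\Hom(-,\mathrm{Proj})$-exactness of the coresolution so that the comparison maps exist. That exactness is supplied by $\mathrm{Cof}(kG)\subseteq\mathrm{GProj}(kG)$ from \cite{CP}.
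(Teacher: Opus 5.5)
Your (ii)$\Rightarrow$(i) is the paper's argument, and so is your $k$-splitting step via \cref{prop:cof_proj}. The problem is the concrete construction in (i)$\Rightarrow$(ii): the mapping cone does not produce $0\rightarrow k\rightarrow w\rightarrow c\rightarrow 0$.

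Lift $\mathrm{id}_y$ to a chain map from $0\rightarrow y\rightarrow p^0\rightarrow\cdots\rightarrow p^{n-1}\rightarrow c\rightarrow 0$ to $0\rightarrow y\rightarrow P_{n-1}\rightarrow\cdots\rightarrow P_0\rightarrow k\rightarrow 0$. After cancelling the split copy of $y$, the exact cone is
\[0\rightarrow p^0\rightarrow p^1\oplus P_{n-1}\rightarrow\cdots\rightarrow p^{n-1}\oplus P_1\rightarrow c\oplus P_0\rightarrow k\rightarrow 0.\]
So $k$ appears as a quotient, not a submodule. What you get is the approximation sequence $0\rightarrow z\rightarrow q\rightarrow k\rightarrow 0$ with $q=c\oplus P_0\in\mathrm{Cof}(kG)$ and $\mathrm{pd}_{kG}(z)\leqslant n-1$, i.e.\ the $\mathrm{Cof}(kG)$-analogue of \cref{prop:sex_cof}.

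To reach the hull you need the extra step from the second half of \cite[Theorem~2.10]{Holm}. Take the special preenvelope $0\rightarrow q\rightarrow p\rightarrow c'\rightarrow 0$ with $p\in\mathrm{Cof}(kG)^\perp$ and $c'\in\mathrm{Cof}(kG)$. Extension-closure gives $p\in\mathrm{Cof}(kG)\cap\mathrm{Cof}(kG)^\perp=\mathrm{Proj}(kG)$. The pushout along $q\rightarrow k$ then gives $0\rightarrow k\rightarrow w\rightarrow c'\rightarrow 0$ together with $0\rightarrow z\rightarrow p\rightarrow w\rightarrow 0$, so $\mathrm{pd}_{kG}(w)\leqslant n$. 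For $n=0$, just take the preenvelope of $k$ itself. Your splitting argument then applies to $c'$.

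With that repair the proof is correct, and it takes a different route from the paper. The paper does not construct the sequence inside $\mathrm{Cof}(kG)$. It notes $\mathrm{Gpd}_{kG}(k)<\infty$ by \eqref{eq:inequalities}, takes the Gorenstein projective hull $0\rightarrow k\rightarrow w\rightarrow c\rightarrow 0$ with $c\in\mathrm{GProj}(kG)$ from \cite[Lemma~2.17]{CFH}, and observes via \cref{rem:finiteness} that $c$ has finite Benson cofibrant dimension. It then concludes $c\in\mathrm{Cof}(kG)$ by \cite[Proposition~2.2(i)]{ET_cof}.

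Your route uses only completeness of $(\mathrm{Cof}(kG),\mathrm{Cof}(kG)^\perp)$, the equality $\mathrm{Cof}(kG)\cap\mathrm{Cof}(kG)^\perp=\mathrm{Proj}(kG)$, and the closure properties of $\mathrm{Cof}(kG)$. So it is self-contained, bypasses both external results, and yields the explicit bound $\mathrm{pd}_{kG}(w)\leqslant\mathrm{BCcd}_kG$. The paper's proof is shorter given the literature.

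Incidentally, the $\mathrm{Hom}_{kG}(-,\mathrm{Proj}(kG))$-exactness you need already follows from $\mathrm{Proj}(kG)\subseteq\mathrm{Cof}(kG)^\perp$ and the cofibrancy of the cokernels, so \cite{CP} is not needed.
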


\begin{proof}   
$(i)\Rightarrow(ii)$ From $\mathrm{Cofdim}_{kG}(k)<\infty$ we deduce that $\mathrm{Gpd}_{kG}(k)<\infty$  by \eqref{eq:inequalities}. Thus by \cite[Lemma~2.17]{CFH} there is a short exact sequence $0\rightarrow k\rightarrow w\rightarrow c\rightarrow 0$ where $\mathrm{pd}_{kG}(w)<\infty$ and $c\in \mathrm{GProj}(kG)$. In particular, $k$ and $w$ have finite Benson cofibrant dimension, thus the same holds for $c$. The $kG$--module $c$ is Gorenstein projective of finite Benson cofibrant dimension, hence $c\in\mathrm{Cof}(kG)$ by \cite[Proposition~2.2(i)]{ET_cof}. It follows by \cref{prop:cof_proj} that $c$ is $k$-projective and that the sequence $0\rightarrow k\rightarrow w\rightarrow c\rightarrow 0$ is $k$--split.

$(ii)\Rightarrow(i)$ In the given short exact sequence we have that $w$ and $c$ have finite Benson cofibrant dimension, thus the same holds for $k$.
\end{proof}

Recall that $\mathrm{silp}(kG)$ denotes the supremum of the injective lengths of projective $kG$--modules, and  that $\mathrm{spli}(kG)$ denotes the supremum of the projective lengths of injective $kG$--modules.

\begin{proposition}
\label{prop:BC2}
For any commutative ring $k$ and group $G$ the following are equivalent:
\begin{itemize}
    \item[(i)] All $kG$--modules have finite Benson cofibrant dimension.
    \item[(ii)] $\mathrm{spli}(kG)<\infty$ and $\mathrm{Cof}(kG)=\mathrm{GProj(kG)}$.
\end{itemize}
In case these conditions hold we have $\mathrm{gldim}(k)<\infty$.
\end{proposition}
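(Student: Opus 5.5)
We prove (i)$\Rightarrow$(ii) by passing through Gorenstein projective dimension, and (ii)$\Rightarrow$(i) by combining the finiteness of $\mathrm{spli}$ with a result of \cite{ET_cof}. The assertion about $\mathrm{gldim}(k)$ is then deduced from (ii).

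\emph{(i)$\Rightarrow$(ii).} By \eqref{eq:inequalities}, every $kG$--module has finite Gorenstein projective dimension. A standard argument then gives $\mathrm{spli}(kG)<\infty$. First, finiteness of $\mathrm{Gpd}$ for every module is uniformly bounded: if it were unbounded, the direct sum of modules of increasing $\mathrm{Gpd}$ would have infinite $\mathrm{Gpd}$, since $\mathrm{Gpd}$ of a direct sum is the supremum of the $\mathrm{Gpd}$ of its summands. So $\mathrm{Gpd}_{kG}(x)\leqslant N$ for all $x$. An injective module $x$ with $\mathrm{Gpd}_{kG}(x)\leqslant N$ admits a sequence $0\to x\to w\to c\to 0$ with $\mathrm{pd}(w)\leqslant N$ and $c$ Gorenstein projective \cite[Lemma~2.17]{CFH}. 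This sequence splits because $x$ is injective, so $\mathrm{pd}(x)\leqslant N$. Hence $\mathrm{spli}(kG)\leqslant N$.

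Next we show $\mathrm{GProj}(kG)\subseteq\mathrm{Cof}(kG)$, the other inclusion being \cite{CP}. A Gorenstein projective $x$ has finite Benson cofibrant dimension by (i), so $x\in\mathrm{Cof}(kG)$ by \cite[Proposition~2.2(i)]{ET_cof}, exactly as in the proof of \cref{prop:BC1}.

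\emph{(ii)$\Rightarrow$(i).} When $\mathrm{spli}(kG)<\infty$, every $kG$--module has finite Gorenstein projective dimension, bounded by $\mathrm{spli}(kG)$. This is classical, via $\mathrm{silp}\leqslant\mathrm{spli}$ for group algebras, or Emmanouil's results. Concretely, a high syzygy of any module embeds into a module of finite projective dimension with Gorenstein projective cokernel. Since $\mathrm{GProj}(kG)=\mathrm{Cof}(kG)$, Gorenstein projective resolutions are Benson cofibrant resolutions, so $\mathrm{Cofdim}=\mathrm{Gpd}<\infty$. This gives (i).

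\emph{$\mathrm{gldim}(k)<\infty$.} Let $M$ be any $k$--module, regarded as a $kG$--module with trivial action, and suppose $\mathrm{Cofdim}_{kG}(M)\leqslant n$. Every Benson cofibrant module is $k$--projective by \cref{prop:cof_proj}. Restricting a length-$n$ Benson cofibrant resolution of $M$ along $k\to kG$ therefore gives a $k$--projective resolution of length $n$, so $\mathrm{pd}_k(M)\leqslant n$. We need a uniform $n$. The uniform bound $N$ on $\mathrm{Gpd}$ obtained above gives $\mathrm{Cofdim}\leqslant N$ by the refinement/equality under (ii). Alternatively, direct sums of modules with unbounded $\mathrm{Cofdim}$ yield a contradiction, since Benson cofibrant dimension is also additive-supremum on direct sums.

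The main obstacle is the step $\mathrm{spli}(kG)<\infty$ in (i)$\Rightarrow$(ii), and its converse use in (ii)$\Rightarrow$(i). It requires the uniform bound argument and the relation between finite $\mathrm{spli}$ and finiteness of $\mathrm{Gpd}$ for all modules. For this we would cite the known equivalence of Emmanouil, or Gedrich--Gruenberg, for group algebras: all modules have finite $\mathrm{Gpd}$ if and only if $\mathrm{spli}(kG)<\infty$.
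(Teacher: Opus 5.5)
Your argument is correct. For (i)$\Rightarrow$(ii), however, you reach $\mathrm{spli}(kG)<\infty$ by a more elementary route than the paper.

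The paper goes from ``all $kG$--modules have finite $\mathrm{Gpd}$'' to ``$\mathrm{silp}(kG)$ and $\mathrm{spli}(kG)$ are finite'' via \cite[Theorem~4.1]{CP}. It then passes to finiteness of $\mathrm{spli}(kG)$ alone via \cite[Corollary~24]{DE}. You instead get a uniform bound $N$ on $\mathrm{Gpd}$ from Holm's direct-sum formula for $\mathrm{Gpd}$ \cite{Holm}. You then apply \cite[Lemma~2.17]{CFH} to an injective module, where injectivity splits the hull sequence. This gives $\mathrm{spli}(kG)\leqslant N$ directly, without ever involving $\mathrm{silp}$. Your step $\mathrm{GProj}(kG)=\mathrm{Cof}(kG)$ via \cite[Proposition~2.2(i)]{ET_cof} is the same as in the paper.

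For (ii)$\Rightarrow$(i) there is no real difference. Both arguments rest on the fact that, over an arbitrary commutative $k$, $\mathrm{spli}(kG)<\infty$ forces every $kG$--module to have finite $\mathrm{Gpd}$. The paper extracts exactly this from \cite[Theorem~4.1]{CP} together with \cite[Corollary~24]{DE}. Your ``concretely'' sentence is a description rather than a proof, so this step should simply be cited in that form.

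In the $\mathrm{gldim}(k)$ part you make explicit the uniform bound that the paper leaves implicit. The paper's ``every $k$--module has finite projective dimension'' needs the usual direct-sum argument to yield $\mathrm{gldim}(k)<\infty$. Both of your ways of obtaining the bound work.
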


\begin{proof}
$(i)\Rightarrow(ii)$ The assumption implies in particular that all $kG$--modules have finite Gorenstein projective dimension. This is equivalent, by \cite[Theorem~4.1]{CP}, to the finiteness of both invariants $\mathrm{silp}(kG)$ and $\mathrm{spli}(kG)$, which is actually equivalent to $\mathrm{spli}(kG)$ being  finite by \cite[Corollary~24]{DE}. In addition, in this case we have that $\mathrm{Cof}(kG)=\mathrm{GProj(kG)}$ by \cite[Proposition~2.2 (i)]{ET_cof}.

$(ii)\Rightarrow(i)$ Follows immediately by the aforementioned characterization of the finiteness of $\mathrm{spli}(kG)$.

Lastly, any $k$--module $x$ may be seen as a $kG$--module with trivial action. By assumption, $x$ admits a finite Benson cofibrant resolution as a $kG$--module. 
Since the modules in $\mathrm{Cof}(kG)$ are projective over $k$ by \cref{prop:cof_proj}, it follows that $x$ admits a finite projective resolution as a $k$--module. Hence $\mathrm{gldim}(k)<\infty.$
\end{proof}

\begin{theorem}
\label{thm:BCcd}
Let $G$ be a group and $k$ a commutative ring with $\mathrm{gldim}(k)<\infty$. Then the following are equivalent:
\begin{itemize}
    \item[(i)] The trivial $kG$--module $k$ has finite Benson cofibrant dimension, that is, $\mathrm{BCcd}_{k}G<\infty$.
    \item[(ii)] There exists a $k$-split short exact sequence $0\rightarrow k\rightarrow w\rightarrow c\rightarrow 0$ of $kG$--modules where $\mathrm{pd}_{kG}(w)<\infty$ and $c\in\mathrm{Cof}(kG)$.
    \item[(iii)] All $kG$--modules have finite Benson cofibrant dimension.
    \item[(iv)] $\mathrm{spli}(kG)<\infty$ and $\mathrm{Cof}(kG)=\mathrm{GProj(kG)}$.
\end{itemize}
\end{theorem}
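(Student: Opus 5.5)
The equivalence (i)$\Leftrightarrow$(ii) is \cref{prop:BC1}, and (iii)$\Leftrightarrow$(iv) is \cref{prop:BC2}; neither needs the hypothesis on $\mathrm{gldim}(k)$. The implication (iii)$\Rightarrow$(i) is trivial, since $k$ is one of the $kG$--modules. So the whole content of the theorem is (ii)$\Rightarrow$(iii), and this is where $\mathrm{gldim}(k)<\infty$ enters. I will follow the Cornick--Kropholler style argument used in \cite{em-t}.

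Fix the $k$-split sequence $0\rightarrow k\rightarrow w\rightarrow c\rightarrow 0$ from (ii), and let $x$ be an arbitrary $kG$--module.

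First, I reduce to $k$-projective modules. Since $\mathrm{gldim}(k)=d<\infty$, the $d$-th syzygy $y$ of $x$ in a $kG$-projective resolution is $k$-projective. By \cref{rem:finiteness}, finiteness of $\mathrm{Cofdim}$ satisfies 2-out-of-3 and projectives have dimension $0$. Hence it suffices to show that every $k$-projective $kG$--module $y$ has finite Benson cofibrant dimension.

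Second, I tensor the sequence with such a $y$. Because the sequence is $k$-split, this gives an exact sequence
\[
0\rightarrow y\rightarrow w\otimes y\rightarrow c\otimes y\rightarrow 0 .
\]
I then need two facts.
\begin{itemize}
\item[(a)] $c\otimes y\in\mathrm{Cof}(kG)$. This holds because $B\otimes c\otimes y$ is a tensor product of the projective $kG$--module $B\otimes c$ with the $k$-projective module $y$, so it is projective by \cref{prop-tensor-proje-latti}.
\item[(b)] $\mathrm{pd}_{kG}(w\otimes y)<\infty$. Take a finite projective resolution of $w$ and tensor it with the $k$-projective module $y$. The result is exact, and its terms $p_i\otimes y$ are $kG$-projective by \cref{prop-tensor-proje-latti}.
\end{itemize}

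With (a) and (b), the 2-out-of-3 property gives $\mathrm{Cofdim}_{kG}(y)<\infty$, which proves (iii).

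The main point to check is (b), in particular the exactness of $P_\bullet\otimes y$. This follows because $y$ is flat over $k$. The reduction step likewise only needs that $d$-th syzygies are $k$-projective, and this holds because restriction to $k$ preserves projectives and $\mathrm{gldim}(k)=d$. I expect no further obstacle.
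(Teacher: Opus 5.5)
Your proposal is correct and follows the paper's proof. The paper likewise reduces everything to (ii)$\Rightarrow$(iii) and treats $k$-projective modules by tensoring the $k$-split sequence, exactly as in your (a) and (b). It then passes to arbitrary $x$ by induction on $\mathrm{pd}_k(x)$ via $0\rightarrow z'\rightarrow p\rightarrow x\rightarrow 0$, which is your $d$-th syzygy reduction carried out one step at a time; this also records the explicit bound $\mathrm{Cofdim}_{kG}(x)\leqslant \mathrm{pd}_{kG}(w)+\mathrm{pd}_k(x)$.
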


\begin{proof}
Even without using $\mathrm{gldim}(k)<\infty$ the bi-implications $(i)\Leftrightarrow (ii)$ and $(iii)\Leftrightarrow (iv)$ hold by \cref{prop:BC1} and \cref{prop:BC2} respectively. In addition, $(iii)\Rightarrow (i)$ holds trivially. Now, assuming $\mathrm{gldim}(k)<\infty$, we prove that $(ii)\Rightarrow (iii)$. Let $x$ be any $kG$--module and assume $\mathrm{pd}_{kG}(w)=n.$ We claim that  
\[
\mathrm{Cofdim_{kG}}(x)\leqslant \mathrm{pd}_{kG}(w)+\mathrm{pd}_{k}(x).
\]
The proof is by induction on $\mathrm{pd}_{k}(x)$. Assume that $x$ is $k$--projective. We may apply $x\otimes -$ to the given $k$--split short exact sequence and obtain a short exact sequence $0\rightarrow x\rightarrow x\otimes w\rightarrow x\otimes c\rightarrow 0$ of $kG$--modules. Since $x$ is $k$--projective we deduce that $\mathrm{pd}_{kG}(x\otimes w)<\infty$ and that $x\otimes c\in\mathrm{Cof}(kG)$. Hence by \cref{rem:finiteness} we have
\[
\mathrm{Cofdim}_{kG}(x)\leqslant\mathrm{Cofdim}_{kG}(x\otimes w)=\mathrm{pd}_{kG}(x\otimes w)\leqslant\mathrm{pd}_{kG}(w).  
\]
This concludes the base case of the induction.
If we assume that $\mathrm{pd}_k(x)=m>0$ then we may consider a short exact sequence of $kG$--modules $0\rightarrow z'\rightarrow p \rightarrow x\rightarrow 0$ where $p$ is a projective $kG$--module. Then $\mathrm{pd}_k(z')=m-1$ and the induction hypothesis implies that $\mathrm{Cofdim}_{kG}(z')=\mathrm{pd}_{kG}(w)+m-1$. As a consequence, $\mathrm{Cofdim}_{kG}(x)=\mathrm{pd}_{kG}(w)+m$, which concludes the proof.
\end{proof}

Our next goal is to prove an analogous statement to  \cref{thm:BCcd} for the proper cofibrant dimension. The next result is analogous to \cite[Proposition~2.2(i)]{ET_cof}.

\begin{proposition}
\label{prop:PC1}
For any commutative ring $k$ and group $G$ the following are equivalent:
\begin{itemize}
    \item[(i)] The trivial $kG$--module $k$ has finite proper cofibrant dimension, that is, $\mathrm{PCcd}_{k}G<\infty$.
    \item[(ii)] There exists a $k$-split short exact sequence $0\rightarrow k\rightarrow w\rightarrow c\rightarrow 0$ of $kG$--modules where $\mathrm{pd}_{kG}(w)<\infty$ and $c\in\mathrm{PCof}(kG)$.
\end{itemize}
\end{proposition}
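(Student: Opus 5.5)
The plan is to follow the argument of \cref{prop:BC1}, but to replace the appeal to \cite[Proposition~2.2(i)]{ET_cof} with the results established earlier for proper cofibrants, namely \cref{cor:pcofs} and \cref{prop:sex_cof}.

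\emph{Proof of (ii)$\Rightarrow$(i).} This direction is formal. Every projective module lies in $\mathrm{PCof}(kG)$, so $\mathrm{PCofdim}_{kG}(w)\leqslant \mathrm{pd}_{kG}(w)<\infty$. Also $\mathrm{PCofdim}_{kG}(c)=0$. By the 2-out-of-3 property for finiteness of $\mathrm{PCofdim}$ in \cref{rem:finiteness}, applied to $0\to k\to w\to c\to 0$, we conclude that $\mathrm{PCofdim}_{kG}(k)<\infty$.

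\emph{Proof of (i)$\Rightarrow$(ii).} Assume $\mathrm{PCofdim}_{kG}(k)=n<\infty$.

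First, by \eqref{eq:inequalities}, $\mathrm{Gpd}_{kG}(k)<\infty$. Hence \cite[Lemma~2.17]{CFH} gives a short exact sequence
\[
0\rightarrow k\rightarrow w\rightarrow c\rightarrow 0
\]
with $\mathrm{pd}_{kG}(w)<\infty$ and $c\in\mathrm{GProj}(kG)$.

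Second, by \cref{rem:finiteness} the module $c$ has finite proper cofibrant dimension, since both $k$ and $w$ do.

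The key step is to show $c\in\mathrm{PCof}(kG)$. \cref{cor:pcofs} would give this if we knew $c\in\mathrm{Cof}(kG)$. The Benson-cofibrant dimension of $c$ is finite by \eqref{eq:inequalities}, and $c$ is Gorenstein projective, so \cite[Proposition~2.2(i)]{ET_cof} gives $c\in\mathrm{Cof}(kG)$. Then \cref{cor:pcofs} yields $c\in\mathrm{PCof}(kG)$.

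If one prefers to avoid the detour through $\mathrm{Cof}(kG)$, there is a direct route. Apply \cref{prop:sex_cof} to $c$, assuming $\mathrm{PCofdim}_{kG}(c)>0$. This gives $0\to z\to q\to c\to 0$ with $q\in\mathrm{PCof}(kG)$ and $\mathrm{pd}_{kG}(z)<\infty$. Since $c$ is Gorenstein projective, $\mathrm{Ext}^1_{kG}(c,z)=0$, because Ext from a Gorenstein projective into a module of finite projective dimension vanishes. So the sequence splits and $c$ is a summand of $q$, hence proper cofibrant.

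Finally, by \cref{rem-Wfin-contains-cof}, $c$ is $k$-projective, so the sequence $0\to k\to w\to c\to 0$ splits over $k$. This gives (ii).

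I expect the only delicate point to be the step $c\in\mathrm{PCof}(kG)$. The splitting argument via Gorenstein projectivity handles it cleanly.
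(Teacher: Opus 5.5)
Your proof is correct and follows the paper's argument. The paper obtains the sequence with $c\in\mathrm{Cof}(kG)$ by quoting \cref{prop:BC1}, whose proof is exactly your \cite[Lemma~2.17]{CFH} plus \cite[Proposition~2.2(i)]{ET_cof} step, and then concludes with \cref{rem:finiteness} and \cref{cor:pcofs} as you do. Your alternative route via \cref{prop:sex_cof} is also valid: it reruns the proof of \cref{cor:pcofs} using the vanishing of $\mathrm{Ext}^1_{kG}$ from Gorenstein projectives into modules of finite projective dimension, which avoids \cite[Proposition~2.2(i)]{ET_cof}.
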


\begin{proof}
$(i)\Rightarrow(ii)$ By the inequalities displayed in \eqref{eq:inequalities} the finiteness of the invariant $\mathrm{PCofdim}_{kG}(k)$ implies the finiteness of $\mathrm{Cofdim}_{kG}(k)$. Thus by \cref{prop:BC1} there exists a short exact sequence $0\rightarrow k\rightarrow w\rightarrow c\rightarrow 0$ where $\mathrm{pd}_{kG}(w)<\infty$ and $c\in\mathrm{Cof}(kG)$. Since $k$ and $w$ have finite proper cofibrant dimension, the same holds for $c$ by \cref{rem:finiteness}. Thus from \cref{cor:pcofs} we deduce that $c\in\mathrm{PCof}(kG)$. 

$(ii)\Rightarrow(i)$ follows by the 2-out-3 property of the finiteness of $\mathrm{PCofdim}(kG)$.
\end{proof}

\begin{proposition}
\label{prop:PC2}
For any commutative ring $k$ and group $G$ the following are equivalent:
\begin{itemize}
    \item[(i)] All $kG$--modules have finite proper cofibrant dimension.
    \item[(ii)] $\mathrm{spli}(kG)<\infty$ and $\mathrm{PCof}(kG)=\mathrm{Cof}(kG)=\mathrm{GProj}(kG)$.
\end{itemize}
In case these conditions hold we have $\mathrm{gldim}(k)<\infty$.
\end{proposition}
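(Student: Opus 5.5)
The plan is to reduce both directions to \cref{prop:BC2}, using \cref{cor:pcofs} to pass from Benson cofibrants to proper cofibrants.

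For $(i)\Rightarrow(ii)$, assume every $kG$--module has finite proper cofibrant dimension. By \eqref{eq:inequalities} every $kG$--module then also has finite Benson cofibrant dimension. \cref{prop:BC2} therefore gives $\mathrm{spli}(kG)<\infty$ and $\mathrm{Cof}(kG)=\mathrm{GProj}(kG)$. It remains to show $\mathrm{Cof}(kG)\subseteq\mathrm{PCof}(kG)$, since the reverse inclusion is \cref{rem-Wfin-contains-cof}. Take $x\in\mathrm{Cof}(kG)$. By hypothesis $\mathrm{PCofdim}_{kG}(x)<\infty$, so \cref{cor:pcofs} gives $x\in\mathrm{PCof}(kG)$. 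This proves $\mathrm{PCof}(kG)=\mathrm{Cof}(kG)=\mathrm{GProj}(kG)$.

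For $(ii)\Rightarrow(i)$, the condition $\mathrm{spli}(kG)<\infty$ together with $\mathrm{Cof}(kG)=\mathrm{GProj}(kG)$ gives, by \cref{prop:BC2}, that every $kG$--module $x$ has finite Benson cofibrant dimension. Since $\mathrm{PCof}(kG)=\mathrm{Cof}(kG)$, a Benson cofibrant resolution is the same thing as a proper cofibrant resolution. Hence $\mathrm{PCofdim}_{kG}(x)=\mathrm{Cofdim}_{kG}(x)<\infty$.

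For the final claim, assume (i). Then (i) of \cref{prop:BC2} holds via \eqref{eq:inequalities}, and that proposition yields $\mathrm{gldim}(k)<\infty$. Alternatively, one can repeat its argument directly: a $k$--module with trivial action has a finite resolution by proper cofibrants, and these are $k$--projective by \cref{rem-Wfin-contains-cof}.

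I expect no step here to be a serious obstacle. The only substantive input is \cref{cor:pcofs}, which is already established. The point needing care is the implication $(i)\Rightarrow(ii)$: finiteness of the proper cofibrant dimension must be applied to Benson cofibrant modules specifically, so that \cref{cor:pcofs} is the right tool to conclude they are proper cofibrant.
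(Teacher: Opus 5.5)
Your proposal is correct and follows the paper's approach: the paper's proof simply states that the result follows from \cref{prop:BC2} together with \cref{cor:pcofs}, and your argument supplies exactly those details. This includes using \eqref{eq:inequalities} to pass from finite proper cofibrant dimension to finite Benson cofibrant dimension, and applying \cref{cor:pcofs} to Benson cofibrant modules to get $\mathrm{Cof}(kG)\subseteq\mathrm{PCof}(kG)$.
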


\begin{proof}
Follows from \cref{prop:BC2} together with \cref{cor:pcofs}.
\end{proof}

\begin{theorem}
\label{thm:PCcd}
Let $G$ be a group and $k$ a commutative ring with $\mathrm{gldim}(k)<\infty$. Then the following are equivalent:
\begin{itemize}
\item[(i)] The trivial $kG$--module $k$ has finite proper cofibrant dimension, that is, $\mathrm{PCcd}_{k}G<\infty$.
\item[(ii)] There exists a $k$-split short exact sequence $0\rightarrow k\rightarrow w\rightarrow c\rightarrow 0$ of $kG$--modules where $\mathrm{pd}_{kG}(w)<\infty$ and $c\in\mathrm{PCof}(kG)$.
\item[(iii)] All $kG$--modules have finite proper cofibrant dimension.
\item[(iv)] $\mathrm{spli}(kG)<\infty$ and $\mathrm{PCof}(kG)=\mathrm{Cof}(kG)=\mathrm{GProj}(kG)$.
\end{itemize}
\end{theorem}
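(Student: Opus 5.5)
The plan mirrors the proof of \cref{thm:BCcd}, with \cref{prop:PC1} and \cref{prop:PC2} taking the roles of \cref{prop:BC1} and \cref{prop:BC2}. Neither proposition needs the hypothesis on $\mathrm{gldim}(k)$, so they give $(i)\Leftrightarrow(ii)$ and $(iii)\Leftrightarrow(iv)$ directly. The implication $(iii)\Rightarrow(i)$ is trivial. The only step that uses $\mathrm{gldim}(k)<\infty$ is $(ii)\Rightarrow(iii)$.

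For $(ii)\Rightarrow(iii)$, fix a $k$-split sequence $0\rightarrow k\rightarrow w\rightarrow c\rightarrow 0$ with $\mathrm{pd}_{kG}(w)<\infty$ and $c\in\mathrm{PCof}(kG)$. Let $x$ be any $kG$--module. I will show
\[
\mathrm{PCofdim}_{kG}(x)\leqslant \mathrm{pd}_{kG}(w)+\mathrm{pd}_k(x),
\]
which is finite because $\mathrm{gldim}(k)<\infty$. The argument is by induction on $\mathrm{pd}_k(x)$.

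In the base case $x$ is $k$--projective. Tensoring the $k$-split sequence with $x$ gives a short exact sequence $0\rightarrow x\rightarrow x\otimes w\rightarrow x\otimes c\rightarrow 0$. By \cref{lem:tensors}, $x\otimes c\in\mathrm{PCof}(kG)$. To bound $\mathrm{pd}_{kG}(x\otimes w)$, take a finite projective resolution of $w$ and tensor it with the $k$-projective module $x$. By \cref{prop-tensor-proje-latti} this yields a projective resolution of $x\otimes w$ of the same length, so $\mathrm{pd}_{kG}(x\otimes w)\leqslant\mathrm{pd}_{kG}(w)$. The comparison formula of \cref{rem:finiteness} then gives $\mathrm{PCofdim}_{kG}(x)\leqslant \mathrm{PCofdim}_{kG}(x\otimes w)\leqslant \mathrm{pd}_{kG}(w)$. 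This formula applies because $x$ is the kernel of an epimorphism onto a proper cofibrant module, and proper cofibrants are closed under kernels of epimorphisms.

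For the inductive step, suppose $\mathrm{pd}_k(x)=m>0$. Choose a short exact sequence $0\rightarrow z'\rightarrow p\rightarrow x\rightarrow 0$ with $p$ a projective $kG$--module. Then $p$ is $k$--projective, so $\mathrm{pd}_k(z')=m-1$. The induction hypothesis bounds $\mathrm{PCofdim}_{kG}(z')$ by $\mathrm{pd}_{kG}(w)+m-1$. Since $p\in\mathrm{PCof}(kG)$, the dimension comparison of \cref{rem:finiteness} gives $\mathrm{PCofdim}_{kG}(x)\leqslant \mathrm{PCofdim}_{kG}(z')+1$, which completes the induction.

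The only point needing care is the base case: membership $x\otimes c\in\mathrm{PCof}(kG)$ is not automatic, and it is exactly what \cref{lem:tensors} provides. Everything else is formal.
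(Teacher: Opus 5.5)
Your proposal is correct and follows the paper's proof essentially step for step. Like the paper, you get $(i)\Leftrightarrow(ii)$ and $(iii)\Leftrightarrow(iv)$ from \cref{prop:PC1} and \cref{prop:PC2}, and prove $(ii)\Rightarrow(iii)$ by induction on $\mathrm{pd}_k(x)$, with \cref{lem:tensors} handling the base case. You are, if anything, slightly more careful than the written proof: you state the inductive bound as an inequality, and you justify $\mathrm{pd}_{kG}(x\otimes w)\leqslant\mathrm{pd}_{kG}(w)$ via \cref{prop-tensor-proje-latti}.
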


\begin{proof}
Similarly to the proof of \cref{thm:BCcd}, without using $\mathrm{gldim}(k)<\infty$ we have that $(i)\Leftrightarrow (ii)$ and $(iii)\Leftrightarrow (iv)$ hold by \cref{prop:PC1,prop:PC2}  respectively. In addition, $(iii)\Rightarrow (i)$ holds trivially. Now, assuming $\mathrm{gldim}(k)<\infty$, we prove that $(ii)\Rightarrow (iii)$. Let $x$ be any $kG$--module and assume $\mathrm{pd}_{kG}(w)=n.$ We claim that 
\[
\mathrm{PCofdim_{kG}}(x)\leqslant \mathrm{pd}_{kG}(w)+\mathrm{pd}_{k}(x)
\]
and our proof will be by induction on $\mathrm{pd}_{k}(x)$. Assume that $x$ is $k$--projective. We may apply $x\otimes -$ to the given $k$--split short exact sequence and obtain a short exact sequence $0\rightarrow x\rightarrow x\otimes w\rightarrow x\otimes c\rightarrow 0$ of $kG$--modules. From \cref{lem:tensors} we deduce that $x\otimes c\in\mathrm{PCof}(kG)$. 
Now, by \cref{rem:finiteness},
\[
\mathrm{PCofdim}_{kG}(x)\leqslant\mathrm{PCofdim}_{kG}(x\otimes w)=\mathrm{pd}_{kG}(x\otimes w)\leqslant\mathrm{pd}_{kG}(w),
\]
which concludes the proof of the base case of the induction.

If we assume that $\mathrm{pd}_k(x)=m>0$ then we may consider a short exact sequence of $kG$--modules $0\rightarrow z'\rightarrow p \rightarrow x\rightarrow 0$ where $p$ is a projective $kG$--module. Then $\mathrm{pd}_k(z')=m-1$ and the induction hypothesis implies that $\mathrm{PCofdim}_{kG}(z')=\mathrm{pd}_{kG}(w)+m-1$. As a consequence, $\mathrm{PCofdim}_{kG}(x)=\mathrm{pd}_{kG}(w)+m$, which concludes the proof.
\end{proof}

\begin{remark}
The existence of $k$--split monomorphisms $0\rightarrow k\rightarrow w$ where $w$ is a module of finite projective dimension, as found for instance in the context of  \cref{prop:BC1,prop:BC2}, is important for proving further homological properties. Such modules $w$ are called characteristic modules, see \cite{tal-char,em-tal-char}. For instance, the characteristic modules appearing in  \cref{prop:BC1,prop:BC2} can be used in order to obtain the following results, whose proofs are very similar to the ones found in \cite[Propositions~2.1 and 2.4]{em-t}. We leave the details to the interested reader.
\end{remark}

\begin{corollary}
\label{cor:integral}
For any group $G$ and commutative ring $k$ the inequality $\mathrm{BCcd}_kG\leqslant\mathrm{BCcd}_{\mathbb{Z}}G$ 
holds true.
\end{corollary}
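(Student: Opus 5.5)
We may assume $\mathrm{BCcd}_{\mathbb{Z}}G=n<\infty$, since otherwise there is nothing to prove. The strategy is to transport the characterization of \cref{prop:BC1} from $\mathbb{Z}$ to $k$ by base change.

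First, apply \cref{prop:BC1} over $\mathbb{Z}$. This yields a $\mathbb{Z}$-split short exact sequence
\[
0\rightarrow \mathbb{Z}\rightarrow w\rightarrow c\rightarrow 0
\]
of $\mathbb{Z}G$--modules, with $\mathrm{pd}_{\mathbb{Z}G}(w)<\infty$ and $c\in\mathrm{Cof}(\mathbb{Z}G)$. We want control over the actual value $n$, not just finiteness. For this we go back to the construction in the proof of \cref{prop:BC1}, via \cite[Lemma~2.17]{CFH}, which I expect gives $\mathrm{pd}_{\mathbb{Z}G}(w)=\mathrm{Gpd}_{\mathbb{Z}G}(\mathbb{Z})\leqslant n$.

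Next, apply $-\otimes_{\mathbb{Z}}k$. Since the sequence is $\mathbb{Z}$-split, it stays exact and becomes $k$-split:
\[
0\rightarrow k\rightarrow w\otimes_{\mathbb{Z}}k\rightarrow c\otimes_{\mathbb{Z}}k\rightarrow 0.
\]
Two facts need checking.
\begin{itemize}
\item[(1)] $\mathrm{pd}_{kG}(w\otimes_{\mathbb{Z}}k)\leqslant \mathrm{pd}_{\mathbb{Z}G}(w)$. This holds because $-\otimes_{\mathbb{Z}}k$ sends $\mathbb{Z}G$-projectives to $kG$-projectives, and it stays exact on a projective resolution of $w$: all modules there are $\mathbb{Z}$-free except possibly $w$ itself, and by the $\mathbb{Z}$-splitting $w\cong\mathbb{Z}\oplus c$ is $\mathbb{Z}$-projective since $c$ is (\cref{prop:cof_proj}).
\item[(2)] $c\otimes_{\mathbb{Z}}k\in\mathrm{Cof}(kG)$. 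We have $B(G,k)\otimes_k(c\otimes_{\mathbb{Z}}k)\cong (B(G,\mathbb{Z})\otimes_{\mathbb{Z}}c)\otimes_{\mathbb{Z}}k$, using $B(G,\mathbb{Z})\otimes_{\mathbb{Z}}k\cong B(G,k)$. The right-hand side is a base change of a projective $\mathbb{Z}G$--module, hence $kG$-projective.
\end{itemize}

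Finally, we extract the dimension bound from this sequence. Splice a projective resolution of $w_k:=w\otimes_{\mathbb{Z}}k$ of length $\leqslant n$ with the sequence above. Concretely, from $0\to k\to w_k\to c_k\to0$ together with $\mathrm{Cofdim}_{kG}(w_k)\leqslant\mathrm{pd}_{kG}(w_k)$, the standard comparison formula of \cref{rem:finiteness} gives
\[
\mathrm{Cofdim}_{kG}(k)\leqslant \max\{\mathrm{Cofdim}(w_k),\ \mathrm{Cofdim}(c_k)-1\}\leqslant n.
\]

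The main obstacle is the sharp bound $\mathrm{pd}_{\mathbb{Z}G}(w)\leqslant n$, rather than mere finiteness. If \cite[Lemma~2.17]{CFH} does not directly give this, I would build $w$ by hand. Take a Benson cofibrant resolution of $\mathbb{Z}$ of length $n$. Using completeness of $(\mathrm{Cof},\mathrm{Cof}^\perp)$ and the fact that $\mathrm{Cof}\cap\mathrm{Cof}^\perp=\mathrm{Proj}$, replace it, as in the proof of \cref{prop:sex_cof}, by a sequence $0\to\mathbb{Z}\to w\to c\to 0$ with $\mathrm{pd}(w)\leqslant n$ and $c$ cofibrant. This is the Auslander–Buchweitz hull argument.
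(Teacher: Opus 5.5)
Your proposal is correct and matches the argument the paper intends: the paper leaves the proof as an adaptation of \cite[Proposition~2.1]{em-t}, which takes the characteristic module from \cref{prop:BC1} over $\mathbb{Z}$, base changes along $-\otimes_{\mathbb{Z}}k$, and uses $B(G,k)\cong B(G,\mathbb{Z})\otimes_{\mathbb{Z}}k$ to see that $c\otimes_{\mathbb{Z}}k$ stays Benson cofibrant. The sharp bound you need, $\mathrm{pd}_{\mathbb{Z}G}(w)\leqslant \mathrm{Gpd}_{\mathbb{Z}G}(\mathbb{Z})\leqslant \mathrm{BCcd}_{\mathbb{Z}}G$, is indeed what \cite[Lemma~2.17]{CFH} provides, so your fallback construction is not needed, and your checks of exactness after base change, of the projective dimension bound, and of the final estimate via \cref{rem:finiteness} are all sound.
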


\begin{corollary}
\label{cor:subgroups}
For any group $G$, commutative ring $k$ and subgroup $H\leqslant  G$ the inequalities $\mathrm{BCcd}_kH\leqslant \mathrm{BCcd}_kG$ and $\mathrm{PCcd}_kH\leqslant \mathrm{PCcd}_kG$ hold true.
\end{corollary}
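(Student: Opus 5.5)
We prove the two inequalities separately, in the spirit of \cite[Proposition~2.4]{em-t}, using characteristic modules together with the Quillen properties of restriction from \cref{lemma-Quillen-adj}.

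\textbf{Benson cofibrant case.} Assume $\mathrm{BCcd}_kG=n<\infty$; otherwise there is nothing to prove. Take a Benson cofibrant resolution
\[
0\to c_n\to \cdots\to c_0\to k\to 0
\]
over $kG$. Restrict it to $H$. Restriction is exact and preserves projectives, so the only issue is whether $\res^G_H c_i$ is $kH$--cofibrant. This is not known in general, so we do not restrict the resolution directly. Instead we argue via characteristic modules. By \cref{prop:BC1} there is a $k$-split sequence
\[
0\to k\to w\to c\to 0
\]
with $\mathrm{pd}_{kG}(w)<\infty$ and $c\in\mathrm{Cof}(kG)$. Moreover, as in \cite[Theorem~2.10]{Holm}, the argument of \cref{prop:sex_cof} applied to $\mathrm{Cof}$ lets us arrange $\mathrm{pd}_{kG}(w)\leqslant n$. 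Restricting to $H$ gives a $k$-split sequence with $\mathrm{pd}_{kH}(\res w)\leqslant n$.

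It then suffices to show that $\res^G_H c$ has finite Benson cofibrant dimension and is Gorenstein projective. Then \cite[Proposition~2.2(i)]{ET_cof} gives $\res^G_H c\in\mathrm{Cof}(kH)$, and the dimension count yields $\mathrm{Cofdim}_{kH}(k)\leqslant\mathrm{pd}_{kH}(\res w)\leqslant n$.
\begin{itemize}
\item Gorenstein projectivity of $\res^G_H c$ is standard, since restriction preserves complete resolutions: $kG$ is $kH$--free, and $\Hom_{kH}(-,p)$ for projective $p$ is handled via coinduction of projectives having finite injective-like behavior. To avoid that subtlety, the cleanest route is to verify directly that $B(H,k)\otimes\res^G_H c$ is $kH$--projective.
\item For this, note that $B(G,k)\otimes c$ is $kG$--projective. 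Moreover, $\res^G_H B(G,k)$ contains $B(H,k)$ as a $k$-split summand-like submodule via the inclusion of functions constant on right cosets, which is possible after choosing a transversal. So one expects $B(H,k)\otimes \res c$ to be a summand of $\res(B(G,k)\otimes c)$.
\end{itemize}
This is the step I expect to be the main obstacle. If the direct summand statement fails, I would fall back on showing that $B(H,k)\otimes\res c$ has finite projective dimension and is Gorenstein projective, hence projective by \cite[Proposition~2.27]{Holm}.

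\textbf{Proper cofibrant case.} This is easier, since \cref{lemma-Quillen-adj} shows $\res^G_H$ maps $\mathrm{PCof}(kG)$ into $\mathrm{PCof}(kH)$ and projectives to projectives. Restricting a proper cofibrant resolution of $k$ of length $\mathrm{PCcd}_kG$ therefore gives a proper cofibrant resolution of the trivial $kH$--module of the same length. Hence $\mathrm{PCcd}_kH\leqslant\mathrm{PCcd}_kG$ immediately.
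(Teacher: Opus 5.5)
The proper-cofibrant half is fine. \cref{lemma-Quillen-adj} gives $\res^G_H(\mathrm{PCof}(kG))\subseteq\mathrm{PCof}(kH)$ for any subgroup $H$, so restricting a resolution of length $\mathrm{PCcd}_kG$ works. This is more direct than the route the paper indicates, namely restricting the characteristic sequence of \cref{prop:PC1} in the style of \cite[Proposition~2.4]{em-t}.

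The Benson half has a genuine gap, at exactly the step you flagged: you never prove $\res^G_H c\in\mathrm{Cof}(kH)$. The embedding you describe is not the right one. Read literally, functions constant on the cosets $Hg$ are $H$-invariant, so they do not form a copy of $B(H,k)$. You also give no $kH$-linear retraction. The fallback via \cite[Proposition~2.27]{Holm} presupposes that $B(H,k)\otimes\res^G_H c$ is Gorenstein projective of finite projective dimension, which you have not shown. Your first bullet is not justified either, since $\Hom_{kH}(\res^G_H P,q)\cong\Hom_{kG}(P,\mathrm{coind}_H^G q)$ and $\mathrm{coind}_H^G q$ need not be projective.

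The missing step is short. Fix a right transversal $T$ of $H$ in $G$ with $1\in T$. Define $s\colon B(H,k)\to\res^G_H B(G,k)$ by $s(\phi)(ht)=\phi(h)$ for $h\in H$, $t\in T$; this function takes the same finitely many values as $\phi$, so it is bounded. Define $r\colon\res^G_H B(G,k)\to B(H,k)$ by $r(f)=f|_H$. Both maps are $kH$-linear:
\begin{itemize}
\item $(h'f)|_H(y)=f(h'^{-1}y)=(h'\cdot f|_H)(y)$, because $h'^{-1}y\in H$;
\item $(h'\,s(\phi))(ht)=\phi(h'^{-1}h)=s(h'\phi)(ht)$.
\end{itemize}
Moreover $rs=\mathrm{id}$. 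So $B(H,k)$ is a $kH$-direct summand of $\res^G_H B(G,k)$. Hence $B(H,k)\otimes\res^G_H x$ is a direct summand of the projective $kH$-module $\res^G_H(B(G,k)\otimes x)$, for every $x\in\mathrm{Cof}(kG)$.

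This argument uses nothing specific about $c$: it shows that $\res^G_H$ maps all of $\mathrm{Cof}(kG)$ into $\mathrm{Cof}(kH)$, for an arbitrary subgroup $H$. So your stated reason for avoiding the direct approach disappears. You can simply restrict a Benson cofibrant resolution of $k$ of length $\mathrm{BCcd}_kG$, exactly as in your proper case. The characteristic module $w$, the bound $\mathrm{pd}_{kG}(w)\leqslant n$, and the appeal to \cite[Proposition~2.2(i)]{ET_cof} then become unnecessary. Your characteristic-module argument does also go through once this summand lemma is in place.
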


\bibliographystyle{alpha}
\bibliography{bibfile}

\end{document}